\numberwithin{equation}{section}
\numberwithin{figure}{section}
\theoremstyle{plain}
\newtheorem{thm}{\protect\theoremname}[section]
\theoremstyle{plain}
\newtheorem{cor}[thm]{\protect\corollaryname}
\theoremstyle{plain}
\newtheorem{lem}[thm]{Lemma}
\newtheorem{prop}[thm]{Proposition}
\theoremstyle{definition}
\newtheorem{defn}[thm]{\protect\definitionname}
\theoremstyle{definition}
\newtheorem{exmp}[thm]{Example}
\theoremstyle{definition}
\newtheorem{qn}{Question}
\newtheorem{conj}{Conjecture}
\theoremstyle{remark}
\newtheorem{rem}[thm]{\protect\remarkname}
\newcommand{\Arf}{\operatorname{Arf}}
\newcommand{\AZ}{\mathsf{AZ}}
\newcommand{\conjugate}{\operatorname{conj}}
\newcommand{\id}{\operatorname{id}}
\newcommand{\tr}{\operatorname{tr}}
\newcommand{\cotr}{\operatorname{cotr}}
\newcommand{\image}{\operatorname{Im}}
\newcommand{\Inv}{\operatorname{Inv}}
\newcommand{\mathhat}{\operatorname{Hat}}
\newcommand{\mathminus}{\operatorname{Minus}}
\newcommand{\pt}{\operatorname{pt}}
\newcommand{\spa}{\operatorname{span}}
\def\spinc{\textrm{Spin}^c}
\def\Sym{\mathrm{Sym}}
\author{Sungkyung Kang}
\address{Center for Geometry and Physics, Institute for Basic Science (IBS), Pohang 37673, Korea}
\email{sungkyung38@icloud.com}
\author{JungHwan Park}
\address{Department of Mathematical Sciences, Korea Advanced Institute for Science and Technology}
\email{jungpark0817@kaist.ac.kr}
\subjclass[2020]{57K10, 57K18}
\keywords{Concordance knots, cabling, Horizontal almost $\iota_K$-complex}
\providecommand{\corollaryname}{Corollary}
\providecommand{\definitionname}{Definition}
\providecommand{\remarkname}{Remark}
\providecommand{\theoremname}{Theorem}
\begin{document}

\title{Torsion in the knot concordance group and cabling}

\begin{abstract}
We define a nontrivial mod 2 valued additive concordance invariant defined on the torsion subgroup of the knot concordance group using involutive knot Floer package. For knots not contained in its kernel, we prove that their iterated $(\text{odd},1)$-cables have infinite order in the concordance group and, among them, infinitely many are linearly independent. Furthermore, by taking $(2,1)$-cables of the aforementioned knots, we present an infinite family of knots which are strongly rationally slice but not slice.

\end{abstract}
\maketitle

\section{Introduction}
Given a knot $K$ and relatively prime integers $p$ and $q$, let $K_{p,q}$ be the \emph{$(p,q)$-cable} of $K$ where $p$ denotes the longitudinal winding. Through out the paper we assume that $|p|>1$ since $K_{1,q}$ is isotopic to $K$. Recall that a knot is called \emph{slice} if it bounds a smoothly embedded disk in $B^4$ and the fact that $(p,1)$-cable and $(p,-1)$-cable of a slice knot are both slice. Miyazaki~\cite[Question 3]{Miyazaki:1994-1} asked if the converse of this fact also holds (see also \cite[Conjecture 1.3]{Meier;2017-1} and \cite[Section 1]{Cochran-Davis-Ray:2014-1}). In fact, we go one step further and focus on the following stronger conjecture. Let $\mathcal{C}$ denote the \emph{smooth knot concordance group}.
\begin{conj}\label{conj:main}
The $(p,q)$-cable of $K$ is torsion in $\mathcal{C}$ if and only if $K$ is slice and $|q|=1$.
\end{conj}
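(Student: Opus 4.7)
The plan is to address the two implications of Conjecture~\ref{conj:main} separately. The ``if'' direction is classical: a slice disk $D\subset B^4$ for $K$ can be thickened to a solid-torus neighborhood, and a properly embedded $(p,\pm 1)$-cabled disk in that solid torus provides a slice disk for $K_{p,\pm 1}$, so this direction is immediate.

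For the ``only if'' direction, I would fix a torsion cable $K_{p,q}$ and split into two subcases. First consider $|q|\geq 2$. I would exploit Litherland's cabling formula for Tristram--Levine signatures,
\[
\sigma_\omega(K_{p,q})=\sigma_{\omega^p}(K)+\sigma_\omega(T_{p,q}),
\]
together with analogous formulas for concordance homomorphisms such as $\tau$ and $\Upsilon$. Since a torsion class has vanishing signature function, $\tau$ and $\Upsilon$, one obtains the identity $\sigma_{\omega^p}(K)=-\sigma_\omega(T_{p,q})$ and similar relations. The torus-knot side is known, nontrivial, and highly structured for $|q|\geq 2$, while the $K$-side is $p$-fold periodic in $\omega$; the task is to show no $K$ can satisfy all of these constraints simultaneously, forcing $|q|=1$. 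Next consider $|q|=1$: the $(p,\pm 1)$-torus pattern is unknotted, so these classical invariants merely force $\tau(K)=0$, $\Upsilon_K\equiv 0$ and $\sigma_\omega(K)\equiv 0$, and the problem collapses to Miyazaki's conjecture that $(p,\pm 1)$-cables of non-slice knots are non-torsion. I would subdivide on the order of $K$ in $\mathcal{C}$: if $K$ has infinite order, some concordance homomorphism beyond $\tau$, $\Upsilon$, $\sigma$ must detect $K$ and its cabling formula should propagate to the cable; if $K$ is non-trivial torsion, the mod~$2$ additive invariant constructed in this paper applies to those $K$ outside its kernel, via the iterated $(\text{odd},1)$-cable result stated in the abstract.

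The main obstacle is the residual case of $|q|=1$ with $K$ non-trivial torsion lying in the kernel of the mod~$2$ invariant. By hypothesis every homomorphism-type invariant we currently have vanishes on such $K$, so no cabling formula can control $K_{p,\pm 1}$. Making progress would require either enlarging the mod~$2$ invariant into a more refined family --- for instance by extracting further algebraic data from the horizontal almost $\iota_K$-complex, producing a $\mathbb{Z}/2^k$-valued obstruction --- or producing an obstruction of genuinely different flavour, such as a Casson--Gordon invariant of the cyclic branched covers of the cable that depends nontrivially on the isotopy class of $K$. Constructing such an invariant, and showing it is nonzero on every non-slice torsion knot, is the essential step I would expect to be the hardest; the remainder of the conjecture, while technical, reduces to existing cabling machinery.
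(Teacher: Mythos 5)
The statement you were asked to prove is labeled \texttt{conj} in the source --- it is Conjecture~\ref{conj:main}, an open conjecture, not a theorem that the paper proves. There is therefore no ``paper's own proof'' to compare against; the paper only establishes partial results in its direction (Theorems~\ref{thm:mainthm}, \ref{thm:mainthm-indep}, \ref{thm:mainthm-complexity}).

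Your write-up is, correctly, not a proof: you honestly flag the residual case of $|q|=1$ with $K$ a nontrivial torsion knot lying in the kernel of $\mathfrak{A}$ as genuinely open, and that diagnosis matches the actual state of the art. A few smaller points worth tightening if you were to develop this further. For $|q|\ge 2$, Litherland's signature formula alone is not enough --- a knot $K$ can have identically zero signature function while $K_{p,q}$ is still torsion-adjacent in ways signatures can't see; the cleaner route (and the one the paper cites in its footnote) is the cabling formulas for $\tau$ and $\Upsilon$, where $\tau(T_{p,q})>0$ for $q\ge 2$ gives an immediate contradiction with $\tau$ vanishing on torsion classes. For $|q|=1$ and $K$ of infinite order, the assertion that ``some concordance homomorphism must detect $K$ and its cabling formula should propagate'' is a hope, not an argument: there is no guarantee that every infinite-order knot is detected by a homomorphism with a usable cabling formula, and indeed this subcase of the Miyazaki conjecture is itself open. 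Finally, for the torsion case, note that the paper's Theorem~\ref{thm:mainthm} requires $\mathfrak{A}(K)=1$, so even within torsion knots it only handles ``half'' the cases (in the sense explained after the theorem statement), exactly as you say. Your identification of the two candidate ways forward --- refining $\mathfrak{I}^U_K$ to extract more than a $\mathbb{Z}/2\mathbb{Z}$ of information, or importing Casson--Gordon--type obstructions from branched covers of the cable --- is a reasonable reading of where progress would have to come from, but neither is carried out here or in the paper.
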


It can be easily seen that if $|q| >1$, then $K_{p,q}$ has infinite order in $\mathcal{C}$.\footnote{One way to prove this is to use the cabling formulae for the Ozsv\'ath-Szab\'o concordance invariant $\tau$~\cite{Oz-Sz:2003-1} and the Ozsv\'{a}th-Stipsicz-Szab\'{o} concordance invariant $\Upsilon$~\cite{Oz-St-Sz:2017-1}; see e.g.\ \cite[Theorem 1]{Hom:2014-1} for $\tau$ and \cite[Proposition 5.3]{Feller-Park-Ray:2019-1} for $\Upsilon$.} Therefore, for the rest of the article we only consider the case when $K$ is not slice and $|q|=1$. In fact, since $K_{p,q}$ is isotopic to $K_{-p,-q}$, we will further assume that $q=1$.

There are many nonslice knots where their $(p,1)$-cables have infinite order in $\mathcal{C}$. For example, if $K$ has nontrivial signature function~\cite{Tristram:1969-1, Levine:1969-1}, then a formula of Litherland~\cite{Litherland:1977-1} implies that $K_{p,1}$ has infinite order in $\mathcal{C}$. In fact, the same formula implies that the set of cables $\{K_{p,1}\}_{p>1}$ is linearly independent in $\mathcal{C}$. A similar conclusions obtained by various different techniques can be found in~\cite{kim2018infinite,Feller-Park-Ray:2019-1,chen2021upsilon,davis2021linear}. These results crucially use the fact that the starting knot $K$ has infinite order in $\mathcal{C}$ (e.g.\ a knot with nontrivial signature function has infinite order in $\mathcal{C}$). 

On the other hand, determining the order of $(p,1)$-cables of a \emph{torsion} knot is much harder. The only known result of this kind is given recently in~\cite{hom2020linear}. They showed that when $K$ is the figure-eight knot, the set of cables $\{K_{2n+1,1}\}_{n>0}$ is linearly independent in $\mathcal{C}$ using involutive knot Floer homology~\cite{hendricks2017involutive}. (This also gave a first example of an infinite order rationally slice knot.) In this article, we expand this result to a much larger family. Roughly, we show that ``half'' of the torsion knots have infinite order once cabled (compare it with \cite[Theorem 8.5]{Miyazaki:1994-1} where they consider cables of fibered knots). Let $K_{p_1,q_1;p_2,q_2;\ldots ;p_m,q_m}$ denote the iterated cable of $K$.

\begin{thm}
\label{thm:mainthm}
Denote by $\mathcal{C}_{T}$ the torsion subgroup of the knot concordance group $\mathcal{C}$. Then there is a nontrivial group homomorphism 
$$\mathfrak{A}\colon \mathcal{C}_{T}\rightarrow \mathbb{Z}/2\mathbb{Z}.$$
Moreover, if $K$ is torsion in $\mathcal{C}$ with $\mathfrak{A}(K) = 1$, then for any sequence of positive integers $n_1, n_2, \ldots, n_m$ the iterated cable $K_{2n_1+1,1;2n_2+1,1;\ldots ;2n_m+1,1}$ has infinite order in $\mathcal{C}$. In particular, for any nonzero integer $n$ the cable $K_{2n+1,1}$ has infinite order in $\mathcal{C}$.
\end{thm}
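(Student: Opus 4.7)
The plan is to build $\mathfrak{A}$ out of the involutive knot Floer package of Hendricks--Manolescu. Each knot $K$ comes with a filtered chain complex $CFK^\infty(K)$ equipped with a skew-filtered chain homotopy involution $\iota_K$, and the local equivalence class of $(CFK^\infty(K),\iota_K)$ is a smooth concordance invariant. If $K$ is torsion in $\mathcal{C}$, then every connected sum $nK$ remains locally equivalent to a bounded model, and this forces the underlying $\iota_K$-complex to be pinned down up to a small residual piece. I would formalize this by passing to the horizontal almost $\iota_K$-complex flagged in the abstract, classifying its local equivalence classes on the torsion subgroup, and defining $\mathfrak{A}(K)\in\mathbb{Z}/2\mathbb{Z}$ as the resulting discrete invariant. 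Concordance invariance is inherited from the local equivalence relation on $\iota_K$-complexes, additivity from the tensor-product connected sum formula, and nontriviality is established by a single direct calculation on a torsion knot whose involutive structure is known, for instance verifying $\mathfrak{A}(4_1)=1$ using the complex of the figure-eight knot, which is consistent with the infinite-order cabling result of Hom--Kang--Park--Stoffregen.

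For the cabling statement, the strategy is to upgrade the mod~$2$ invariant $\mathfrak{A}$ into an integer-valued obstruction on cables. The key ingredient is a chain-level cabling formula expressing the horizontal almost $\iota_K$-complex of $K_{2n+1,1}$ in terms of that of $K$, in the spirit of the $\iota_K$-cabling analysis already carried out for the figure-eight knot. Applied to a torsion $K$ with $\mathfrak{A}(K)=1$, the formula should produce a cable whose horizontal almost complex is locally nontrivial in a quantifiable way; this nontriviality can be read off by a homomorphism $h\colon\mathcal{C}\to\mathbb{Z}$ of the type coming from involutive correction terms of large surgeries. Since $h$ vanishes on $\mathcal{C}_T$ but must take a nonzero value on $K_{2n+1,1}$ forced by $\mathfrak{A}(K)=1$, the cable has infinite order, and the $\mathbb{Z}$-linearity of $h$ handles connected sums automatically.

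The iterated case is where the main technical difficulty sits, and this is the step I expect to be the principal obstacle. After even a single $(2n_1+1,1)$-cabling, $K_{2n_1+1,1}$ is no longer torsion, so $\mathfrak{A}$ itself is not defined on it and one cannot simply iterate the above argument on $\mathfrak{A}$ alone. The way around this is to work directly with the horizontal almost $\iota_K$-complex rather than the mod~$2$ shadow, and to isolate a structural feature responsible for $\mathfrak{A}(K)=1$ that is preserved under each $(2n_i+1,1)$-cabling, so that the single integer-valued homomorphism $h$ continues to detect every iterated cable $K_{2n_1+1,1;\ldots;2n_m+1,1}$. Carrying this out amounts to controlling the cabling operation at the chain level through all $m$ iterations and showing inductively that the horizontal almost $\iota_K$-structure of the $(i+1)$-fold cable inherits a specific nontrivial piece from that of the $i$-fold cable. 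Once this propagation lemma is in place, the infinite-order conclusion for the iterated cable, and in particular the special case $K_{2n+1,1}$, follows at once by evaluating~$h$.
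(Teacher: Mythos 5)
Your description of how to set up $\mathfrak{A}$ is essentially correct and matches the paper: define the horizontal almost $\iota_K$-complex group $\mathfrak{I}^U_K$, show its torsion subgroup is $\mathbb{Z}/2\mathbb{Z}$ generated by the figure-eight complex $C_E$, restrict the natural map $\mathcal{C}\to\mathfrak{I}^U_K$ to $\mathcal{C}_T$, and verify nontriviality on $4_1$. That part is sound.

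The cabling argument, however, has a genuine gap. You propose detecting the infinite order of $K_{2n+1,1}$ via a group homomorphism $h\colon\mathcal{C}\to\mathbb{Z}$ ``of the type coming from involutive correction terms of large surgeries.'' No such homomorphism exists: the involutive correction terms $\bar d$ and $\underline d$ of surgeries are not additive under connected sum (they only satisfy subadditivity-type inequalities), and the DHST-style $\varphi_j$ invariants that \emph{are} homomorphisms already vanish on torsion knots and on their $(\mathrm{odd},1)$-cables. The observation that $h$ vanishes on $\mathcal{C}_T$ because $\mathbb{Z}$ is torsion-free is vacuous and does not help force $h(K_{2n+1,1})\neq 0$ from $\mathfrak{A}(K)=1$. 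The paper's detection mechanism is entirely different and is order-theoretic rather than homomorphism-valued: since $\mathfrak{I}^U_K$ is totally ordered modulo $C_E$ (\Cref{thm:comparison}), any class comparable to \emph{both} $C_O$ and $C_E$ must have infinite order in $\mathfrak{I}^U_K$ (\Cref{cor:infiniteorder}). The whole content of \Cref{sec:cable} is to produce almost $\iota_K$-local maps exhibiting both comparabilities for $\mathfrak{A}(K_{2n+1,-1})$, and this construction passes essentially through bordered Floer homology, specifically through almost bordered-$\iota$-local morphisms $\widehat{CFD}(S^3\smallsetminus E)\to\widehat{CFD}(S^3\smallsetminus K)$ (\Cref{lem:borderedinterpret}, \Cref{lem:almostbordered}) that can then be box-tensored with the cabling bimodule. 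Your proposal does not invoke the bordered package at all, and it is not clear how to get the needed local maps without it.

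For the iterated case, your intuition that one must work above the level of the $\mathbb{Z}/2\mathbb{Z}$ shadow and isolate a cabling-stable structural feature is correct, but note that $\mathfrak{A}$ viewed as a map $\mathcal{C}\to\mathfrak{I}^U_K$ \emph{is} still defined on the cable; what is lost is only the $\mathbb{Z}/2\mathbb{Z}$-valued restriction. The concrete ``propagation lemma'' you gesture at is precisely \Cref{lem:bootstrap}: the existence of an almost bordered-$\iota$-local map $\widehat{CFD}(S^3\smallsetminus E)\to\widehat{CFD}(S^3\smallsetminus K)$ is preserved when $K$ is replaced by $K_{2n+1,-1}$. Establishing that requires a chain-level model for $\iota_{E_{2n+1,-1}}$ (\Cref{lem:cablecomputation}) and an explicit $\iota_K$-local map from the figure-eight complex into the cabled figure-eight complex (\Cref{lem:cable-fig8}); without these ingredients the inductive step does not close. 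So the overall gap is twofold: the proposed $\mathbb{Z}$-valued homomorphism does not exist, and the mechanism that actually replaces it — total ordering plus bordered-$\iota$-local maps — is absent from your argument.
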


We also show that knots obtained by certain cables on $K$ with $\mathfrak{A}(K) = 1$ are linearly independent. 

\begin{thm}
\label{thm:mainthm-indep}
If $K$ is torsion in $\mathcal{C}$ with $\mathfrak{A}(K) = 1$, then the set of cables $\{K_{2n+1,1}\}_{n>0}$ contains an infinite subset which is linearly independent in $\mathcal{C}$.
\end{thm}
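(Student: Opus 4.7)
The plan is to upgrade the single mod-$2$ invariant $\mathfrak{A}$ used in \Cref{thm:mainthm} into a triangular family of involutive concordance invariants that detects each cable in a carefully chosen subsequence. Concretely, I would look for an infinite sequence $n_1<n_2<\cdots$ of positive integers and a sequence of mod-$2$, additive concordance homomorphisms $\nu_k:\mathcal{C}\to\mathbb{Z}/2\mathbb{Z}$ such that $\nu_k(K_{2n_i+1,1})=0$ for $i<k$ and $\nu_k(K_{2n_k+1,1})=1$. Once such a triangular family is produced, linear independence of $\{K_{2n_i+1,1}\}$ in $\mathcal{C}$ follows by the standard bootstrap: if $\sum a_i[K_{2n_i+1,1}]=0$ and $k$ is the largest index with $a_k\neq 0$, then evaluating $\nu_k$ (using additivity and concordance invariance) forces $a_k=0$, a contradiction.

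To build the $\nu_k$, I would analyze the horizontal almost $\iota_K$-complex of $K_{2n+1,1}$ via the involutive cabling/mapping-cone machinery used in \Cref{thm:mainthm}. The hypothesis $\mathfrak{A}(K)=1$ should manifest as a distinguished ``tower'' feature in this complex, and the $(2n+1,1)$-cabling should shift or stretch this tower by an amount controlled by $n$. Each level of such a tower defines its own mod-$2$ local-equivalence invariant $\nu_k$, constructed by the same recipe as $\mathfrak{A}$ but localized at that level; these are automatically additive concordance invariants because local equivalence of almost $\iota_K$-complexes is.

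Assuming the level detected by $K_{2n+1,1}$ is unbounded in $n$, I would pass inductively to a subsequence $\{n_i\}$ on which the detected level strictly increases, so $\nu_{k}(K_{2n_k+1,1})=1$ while $\nu_k(K_{2n_i+1,1})=0$ for $i<k$. This completes the triangular setup and hence the linear independence statement.

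The main obstacle is the computational/algebraic step of verifying that the iterated $(2n+1,1)$-cabling produces strictly growing complexity in the horizontal almost $\iota_K$-complex, rather than stabilizing or cycling through a bounded pool of local-equivalence classes. Controlling how $\iota_K$ interacts with the cabling mapping cone, and ensuring the $n$-dependence of the distinguished tower is genuinely monotone (or at least unbounded) given only $\mathfrak{A}(K)=1$, is where the real work lies; the rest of the argument is a formal consequence of the triangular structure.
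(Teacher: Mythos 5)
Your strategy of building a triangular family of invariants is the right general shape, and your instinct that the complexity of the horizontal almost $\iota_K$-complex of $K_{2n+1,1}$ should grow with $n$ is correct (the paper's complexes $C_n$ play exactly the role of your ``distinguished towers''). However, there is a fatal gap in the ``formal consequence'' step: a triangular family of $\mathbb{Z}/2\mathbb{Z}$-valued homomorphisms $\nu_k$ cannot detect $\mathbb{Z}$-linear independence. If $\sum_i a_i [K_{2n_i+1,1}] = 0$ in $\mathcal{C}$ and $k$ is the largest index with $a_k \neq 0$, applying $\nu_k$ only yields $a_k \equiv 0 \pmod 2$, not $a_k = 0$. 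Writing $a_i = 2b_i$ then gives $2 \sum_i b_i [K_{2n_i+1,1}] = 0$, so $\sum_i b_i [K_{2n_i+1,1}]$ is $2$-torsion, and you cannot re-apply the $\nu_k$ usefully to a $2$-torsion class (for example, $\mathfrak{A}(E)=1$ even though $[E]$ has order $2$). So mod-$2$ homomorphisms by themselves only rule out combinations with an odd coefficient, not arbitrary integer combinations.

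What the paper uses instead is the total order on $\mathfrak{I}^U_K$ modulo $[C_E]$ (\Cref{thm:comparison}). After proving the single-cable case, it produces two-sided bounds by the standard complexes $C_n$: a lower bound $[C_n] \le \mathfrak{A}(K_{2n+1,-1})$ coming from the cabled local map constructed in \Cref{thm:odd1hasinfiniteorder}, and an upper bound $\mathfrak{A}(J) < [C_{N+1}]$ whenever $U^N$ annihilates the torsion of $HFK^-(S^3,J)$ (\Cref{lem:upperbound}). Because the upper bound also controls all integer multiples $C\,\mathfrak{A}(K_{2n_i+1,-1})$, a recursively chosen subsequence $n_1 < n_2 < \cdots$ gives the chain of strict inequalities
\[
C\,\mathfrak{A}(K_{2n_i+1,-1}) \;<\; [C_{n_{i+1}}] \;\le\; \mathfrak{A}(K_{2n_{i+1}+1,-1})
\]
for every integer $C$, and the ordered-group argument then kills any nontrivial integer relation. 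This order-theoretic ladder is the mechanism that replaces your mod-$2$ filtration; to repair your argument you would need to upgrade the $\nu_k$ from $\mathbb{Z}/2\mathbb{Z}$-valued invariants to something capturing this monotone, unbounded structure, which is precisely what $\mathfrak{A}$ valued in the totally ordered quotient of $\mathfrak{I}^U_K$ accomplishes.

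A smaller point: you assert that the $\nu_k$ ``are automatically additive concordance invariants because local equivalence of almost $\iota_K$-complexes is.'' Local equivalence being a concordance invariant gives a well-defined map $\mathcal{C}\to \mathfrak{I}^U_K$, but a $\mathbb{Z}/2\mathbb{Z}$-valued function of the local class is a homomorphism only if it factors through a genuine group quotient, which must be verified (and is exactly the content of \Cref{cor:pseudoArf} for $\mathfrak{A}$ itself, relying on \Cref{thm:comparison}); it does not come for free.
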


For the nontrivial group homomorphism $\mathfrak{A}$, we first define the almost local $\iota_K$-equivalence group $\mathfrak{I}^U_K$ of horizontal almost $\iota_K$-complexes (see Section~\ref{sec:horizontal} for more details). Note that the $\iota_K$-local equivalence group $\mathfrak{I}_K$ and the almost $\iota_K$-local equivalence group $\widehat{\mathfrak{I}}_K$ were defined previously in \cite{zemke2019connected} and \cite{hom2020linear}, respectively. Moreover, we show that $\mathfrak{A}$ is the composition of the following sequence of the natural ``forgetful maps'' followed by the involutive knot Floer homology map $\mathcal{C}\rightarrow \mathfrak{I}_K$;
\[
\mathfrak{I}_K \rightarrow \widehat{\mathfrak{I}}_K \rightarrow \mathfrak{I}^U_K.
\]
In other words, involutive knot Floer theory gives a well-defined homomorphism $\mathfrak{A}\colon\mathcal{C}\rightarrow \mathfrak{I}^U_K$.


It turns out that $\mathfrak{I}^U_K$ is totally ordered modulo the figure-eight knot $E$, so that its torsion subgroup is generated by $E$. This allows us to consider its restriction $\mathfrak{A}\colon\mathcal{C}_{T}\rightarrow \mathbb{Z}/2\mathbb{Z}$, where we have that $\mathfrak{A}(E)=1$. Therefore, the above theorems can be thought of it as a generalizations of \cite{hom2020linear}. Moreover, note that the ``half'' of the torsion knots satisfy $\mathfrak{A}=1$; i.e.\ if $\mathfrak{A}(K)=0$, then $\mathfrak{A}(K\# E)=1$.

The phenomenon of total orderings of algebraic structures arising from Heegaard Floer theory and its variants occurs in other settings as well. For instance, the almost local equivalence group of $\iota$-complexes is totally ordered~\cite{dai2018infinite} and the almost local equivalence group of knot-like complexes over $\mathbb{F}_2[U,V]/(UV)$ is also totally ordered~\cite{DHST:2021-1}. A notable difference in our case is that our group $\mathfrak{I}^U_K$ contains a torsion, and one has to take a quotient by that element to obtain a total ordering.

Now, we restrict our attention to the case when $K$ allows a special symmetry and $p=2$. Note that, so far, we have only considered $(\text{odd},1)$-cables, but by considering $(2,1)$-cables we will obtain an interesting application to rational knot concordance. We say a knot $K$ is \emph{rationally slice} if it bounds a smoothly embedded disk $\Delta$ in a rational homology ball $X$. Moreover, if the inclusion induces an isomorphism
$$H_1(S^3 \smallsetminus K;\mathbb{Z}) \xrightarrow{\cong} H_1(X \smallsetminus \Delta; \mathbb{Z})/\text{torsion},$$
then we say $K$ is \emph{strongly rationally slice}.\footnote{Strongly rationally slice knots are rationally slice knots with \emph{complexity} one. See~\Cref{sec:stronglyrationallyslice} for the notion of complexity and \cite{Cochran-Orr:1993-1, Cha-Ko:2002-1, Cha:2007-1} for related works.} Following the proof in~\cite{Levine:1969-1}, it can be easily verified that every strongly rationally slice knot is algebraically slice. Moreover, slice obstructions such as $\tau$-invariant~\cite{Oz-Sz:2003-1}, $\varepsilon$-invariant~\cite{Hom:2014-2}, $\Upsilon$-invariant~\cite{Oz-St-Sz:2017-1}, $\Upsilon^2$-invariant~\cite{Kim-Livingston:2018-1}, $\nu^+$-invariant~\cite{Hom-Wu:2016-1}, and $\varphi_j$-invariants~\cite{DHST:2021-1} all vanish for strongly rationally slice knots. These facts make it difficult to find an example of a strongly rationally slice knot that is not slice. In this article, we present the first such examples:
\begin{thm}
\label{thm:mainthm-rationallyslice}
There are infinitely many pairwise nonconcordant knots which are strongly rationally slice but not slice.
\end{thm}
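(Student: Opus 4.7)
The plan is to exhibit the family as $(2,1)$-cables of iterated odd cables of the figure-eight knot $E$. Concretely, for a positive integer $n$, set $K_n = (E_{2n+1, 1})_{2, 1}$; the desired infinite family will be $\{K_n\}_{n\ge 1}$ or a suitable infinite subset thereof.

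For strong rational sliceness, the basic principle is that $(p,1)$-cabling preserves strong rational sliceness. If $K$ bounds a disk $\Delta$ in a rational homology ball $X$ satisfying the $H_1$-isomorphism condition, then the $(p,1)$-cable of $\Delta$ taken inside a tubular neighborhood $N(\Delta)\cong D^2\times \Delta \subset X$ is an embedded disk (because $T(p,1)$ is the unknot), bounded by $K_{p,1}\subset \partial X$; a Mayer--Vietoris calculation using the decomposition $X = N(\Delta)\cup (X\smallsetminus N(\Delta))$ then verifies that $H_1(X\smallsetminus \Delta_{p,1})/\text{torsion}\cong \mathbb{Z}$, generated by the meridian of $K_{p,1}$. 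Once we verify strong rational sliceness of the base knot $E_{2,1}$ directly---using the fact that the Alexander polynomial $\Delta_{E_{2,1}}(t)=\Delta_E(t^2)$ is a norm in $\mathbb{Z}[t,t^{-1}]$ to build a metabolizer and a corresponding disk in a rational homology ball---iterating the cabling lemma gives strong rational sliceness of every $K_n$.

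For non-sliceness and pairwise nonconcordance, I would use the involutive knot Floer invariants developed in the paper. By \Cref{thm:mainthm} applied to $E$ (which has $\mathfrak{A}(E)=1$), each iterated cable $E_{2n+1,1}$ has infinite order in $\mathcal{C}$, and by \Cref{thm:mainthm-indep} an infinite subcollection of $\{E_{2n+1,1}\}_{n\ge 1}$ is linearly independent. To transfer this to the $(2,1)$-cables $K_n$, I would compute the effect of $(2,1)$-cabling on the horizontal almost $\iota_K$-complex and show that, restricted to the relevant subset of $\mathfrak{I}^U_K$, the induced map is faithful enough to preserve non-triviality and linear independence of classes. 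Non-sliceness and pairwise nonconcordance of an infinite subfamily of $\{K_n\}$ then follow from the corresponding facts for $\{E_{2n+1,1}\}$.

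The main obstacle is the computation of the involutive knot Floer invariants of the $(2,1)$-cable. Since all classical sliceness obstructions ($\tau$, $\Upsilon$, $\varepsilon$, $\varphi_j$, \dots) vanish on strongly rationally slice knots by the discussion preceding the theorem, the entire detection power must come from the involutive refinement. This requires a careful analysis of the cable Heegaard diagram and the behavior of $\iota_K$ on it, and in particular an extension of the cabling formula for the almost $\iota_K$-complex beyond the odd-cable case used for \Cref{thm:mainthm}. A secondary technical point is verifying the strong $H_1$-condition for the base rational slice disk of $E_{2,1}$, since the standard Kawauchi construction for amphicheiral knots is not obviously strong, and one must exploit the algebraic structure coming from the $(2,1)$-cabling to obtain the isomorphism on first homology modulo torsion.
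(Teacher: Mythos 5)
The proposal has two genuine gaps, one in each half of the argument, and in both cases the paper takes a different route that avoids the difficulty.

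\textbf{Strong rational sliceness.} Your strategy is to show that $(p,1)$-cabling preserves strong rational sliceness and then produce a strongly rationally slice base knot. But your family is $K_n=(E_{2n+1,1})_{2,1}$, and the inner knot $E_{2n+1,1}$ is \emph{not} strongly rationally slice: its Alexander polynomial $\Delta_E(t^{2n+1})$ is not a norm, so it is not even algebraically slice, whereas the paper observes (following Levine) that strongly rationally slice knots are algebraically slice. So the inductive strategy cannot even get started. Moreover, the suggestion to certify strong rational sliceness of $E_{2,1}$ by observing that $\Delta_{E_{2,1}}$ is a norm and then ``building a metabolizer and a corresponding disk'' has no content: a Fox--Milnor factorization of the Alexander polynomial is a necessary condition, not a mechanism that produces a smooth disk in a rational homology ball. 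What is actually used in the paper is Kawauchi's theorem: $E$, being strongly negative-amphichiral, bounds a disk in a rational homology ball with complexity $2$ (not $1$), together with \Cref{lem:satellitecomplexity}, which says that applying a winding-number-$w$ pattern whose companion of the unknot is slice reduces complexity from $c$ to at most $c/\gcd(c,w)$. Odd cables leave complexity $2$ untouched; the final even-winding $(2,1)$-cable is what brings it down to $1$. Your version of the cabling lemma (``strongly rationally slice is preserved by $(p,1)$-cabling'') is a special case of this with $c=1$, and is simply not applicable here because $c=2$ until the last step.

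\textbf{Non-sliceness.} You acknowledge that the crux of your plan is to compute the effect of $(2,1)$-cabling on the horizontal almost $\iota_K$-complex, and that this is the main obstacle. The paper deliberately sidesteps this computation. Instead it passes to the double branched cover: by the Akbulut--Kirby trick, $\Sigma(J_{2,1})\cong S^3_{+1}(J\# J^r)$. If $nJ_{2,1}$ were slice, the branched cover of the slice disk would be a spin rational homology ball bounding $n\Sigma(J_{2,1})$, and \Cref{lem:surgerylem} (via the involutive surgery formula and the total ordering on almost $\iota$-complexes) would force $\mathfrak{A}(J\# J^r)\ge[C_O]$. But \Cref{rem:iterrmk} and \Cref{prop:reversal} give $\mathfrak{A}(-J)>[C_O]$ and $\mathfrak{A}(-J^r)>[C_O]$, hence $\mathfrak{A}(J\#J^r)<[C_O]$, a contradiction. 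This converts a would-be computation with an even cabling pattern (where the bordered machinery of \Cref{sec:cable} does not directly apply) into an application of already-established odd-cable results plus the involutive surgery formula, which is the essential trick you are missing.

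Finally, the paper gets ``infinitely many pairwise nonconcordant'' knots from a single $J_{2,1}$ of infinite order, by taking $\{nJ_{2,1}\}_{n\ge1}$; connected sums of strongly rationally slice knots are strongly rationally slice, so this suffices and no linear-independence statement for $(2,1)$-cables is needed.
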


The examples are constructed in the following way. Recall that a knot $K$ is called \emph{strongly negative-amphichiral} if there is an orientation-preserving involution $\phi\colon S^3 \to S^3$ such that $\phi(K)=K$ and the fixed point set of $\phi$ is a copy of $S^0$ contained in $K$. Kawauchi~\cite{kawauchi2009rational} showed that every strongly negative-amphichiral knot $K$ is rationally slice; that is, $K$ bounds a smoothly embedded disk $\Delta$ in a rational homology ball $X$. Moreover, he showed the homomorphism induced by the inclusion sends the generator of $H_1(S^3 \smallsetminus K;\mathbb{Z})$ to twice the generator of $H_1(X \smallsetminus \Delta; \mathbb{Z})/\text{torsion}$. Combined with Mayer-Vietoris sequence (see \Cref{lem:satellitecomplexity}), it implies that the iterated cable $K_{p_1,1;p_2,1;\ldots;p_m,1}$ of a strongly negative-amphichiral knot $K$ is strongly rationally slice if $p_i$ is even for some $i$. Hence the following theorem immediately implies Theorem~\ref{thm:mainthm-rationallyslice}.

\begin{thm}
\label{thm:mainthm-complexity}
If $K$ is torsion in $\mathcal{C}$ with $\mathfrak{A}(K) = 1$, then for any sequence of positive integers $n_1, n_2, \ldots, n_m$ the $(2,1)$-cable of the iterated cable $K_{2n_1+1,1;2n_2+1,1; \ldots; 2n_m+1,1}$ has infinite order in $\mathcal{C}$.
\end{thm}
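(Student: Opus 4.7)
The plan is to imitate the strategy of \autoref{thm:mainthm} with an additional $(2,1)$-cabling step appended at the outermost level. Write $L = K_{2n_1+1,1;2n_2+1,1;\ldots;2n_m+1,1}$. The proof of \autoref{thm:mainthm} should, in the course of showing that $L$ has infinite order in $\mathcal{C}$, produce an explicit horizontal almost $\iota_K$-complex model for the class $\mathfrak{A}(L) \in \mathfrak{I}^U_K$ and establish that this class has infinite order. My goal is therefore to show that $\mathfrak{A}(L_{2,1})$ also has infinite order in $\mathfrak{I}^U_K$, which by definition of $\mathfrak{A}$ forces $L_{2,1}$ to have infinite order in $\mathcal{C}$.

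The technical core is a $(2,1)$-cabling formula at the level of horizontal almost $\iota_K$-complexes. Concretely, I would first invoke the mapping-cone description of $\mathit{CFK}(J_{2,1})$ in terms of $\mathit{CFK}(J)$ (in the spirit of Hom's and Petkova's cabling formulas), pass to the horizontal $U$-quotient used to define $\mathfrak{I}^U_K$, and then identify the induced involution $\iota_{J_{2,1}}$ up to almost chain homotopy in terms of the involution $\iota_J$. Once this formula is in place, I would apply it to $J = L$ and compute $\mathfrak{A}(L_{2,1})$. Heuristically, $(2,1)$-cabling should act by a ``doubling'' on the relevant class, in the same way that $\tau(J_{2,1}) = 2\tau(J)$, so that $\mathfrak{A}(L_{2,1})$ retains and in fact amplifies the infinite-order behavior of $\mathfrak{A}(L)$ inside $\mathfrak{I}^U_K$.

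The main obstacle will be identifying the involution on the $(2,1)$-cable mapping cone. For $(\mathrm{odd},1)$-cabling the pattern has a distinguished fixed middle strand, which keeps the induced involution close to a diagonal form and enables the clean inductive computation used in \autoref{thm:mainthm}; for $p=2$ there is no such strand, and the two mapping-cone summands get exchanged by the symmetry (up to lower-order corrections). Handling this carefully will likely require working with a standard-form representative of the horizontal almost $\iota_K$-complex of $L$ and then a filtered Gauss-elimination argument, in the spirit of \cite{hendricks2017involutive, hom2020linear, DHST:2021-1}, to extract the class of $L_{2,1}$ in $\mathfrak{I}^U_K$. Once the cabling formula and the resulting infinite-order conclusion for $\mathfrak{A}(L_{2,1})$ are secured, the theorem follows immediately from the fact that $\mathfrak{A} \colon \mathcal{C} \to \mathfrak{I}^U_K$ is a homomorphism.
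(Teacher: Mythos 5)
Your proposal takes a fundamentally different route from the paper's, and the route you sketch has a serious gap.

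The paper's proof of \Cref{thm:mainthm-complexity} does not compute $\mathfrak{A}(L_{2,1})$ via any $(2,1)$-cabling formula. Instead, writing $J = L$, it passes to the $2$-fold cyclic branched cover $\Sigma(J_{2,1})$ and uses the Akbulut--Kirby identification $\Sigma(J_{2,1}) \cong S^3_{+1}(J \# J^r)$. Assuming $nJ_{2,1}$ were slice, the branched double cover of $B^4$ along the slice disk is a spin rational homology ball bounded by $n\Sigma(J_{2,1})$. \Cref{lem:surgerylem} then converts this into the constraint $\mathfrak{A}(J \# J^r) \ge [C_O]$, which is contradicted by combining \Cref{rem:iterrmk} with \Cref{prop:reversal} (these give $\mathfrak{A}(J), \mathfrak{A}(J^r) > [C_O]$, hence $\mathfrak{A}(J \# J^r) < [C_O]$). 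The decisive object is thus $\mathfrak{A}(J \# J^r)$, not $\mathfrak{A}(J_{2,1})$, and the invariant $\mathfrak{A}$ is applied to the \emph{interior} knot $J$ rather than to the $(2,1)$-cable itself.

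The gap in your plan is twofold. First, the $(2,1)$-cabling formula for horizontal almost $\iota_K$-complexes that your argument hinges on is asserted but never established; the entire technical content of the theorem would live in that formula, and the difficulty of pinning down the involution on the $(2,1)$-cable mapping cone (which you correctly flag) is precisely why the paper avoids this route. Second, and more seriously, your heuristic ``$\tau(J_{2,1}) = 2\tau(J)$ amplifies the infinite-order behavior'' is vacuous in this setting: since $K$ is torsion, $\tau(K) = \varepsilon(K) = 0$, so $\tau(L) = 0$ and hence $2\tau(L) = 0$. Indeed, as the paper emphasizes, the cables $L_{2,1}$ in question are (when $K$ is strongly negative-amphichiral) strongly rationally slice, and all the standard concordance invariants from $CFK^-$ ($\tau$, $\varepsilon$, $\Upsilon$, $\Upsilon^2$, $\nu^+$, $\varphi_j$) vanish on them. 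There is therefore no reason offered for why $\mathfrak{A}(L_{2,1})$ should have infinite order in $\mathfrak{I}^U_K$, and your plan would only go through if you could rule out the possibility that $\mathfrak{A}(L_{2,1})$ is trivial or torsion, which is exactly the case the branched-cover argument is designed to sidestep.
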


\subsection*{Organization and proof outline}
We briefly sketch the ideas for the proofs of above theorems as follows. In \Cref{sec:horizontal}, we define the concept of horizontal almost $\iota_K$-complexes, which provides an algebraic model for $CFK^-$ of knots in $S^3$, together with the action of $\iota_K$ on $\widehat{CFK}$. This allows us to define the almost $\iota_K$-local equivalence group $\mathfrak{I}^U_K$ of horizontal almost $\iota_K$-complexes, which is equipped with a canonically defined partial order. Then we prove that it is totally ordered modulo the figure-eight knot, from which the definition of the invariant $\mathfrak{A}$ in \Cref{thm:mainthm} follows. 

In \Cref{sec:horfrombordered}, we show that the bordered involution on a 0-framed knot complement naturally induces a horizontal almost $\iota_K$-complex, which is defined uniquely up to almost $\iota_K$-local equivalence. Then, in \Cref{sec:cable}, we prove a part of \Cref{thm:mainthm} which states that $K_{2n+1,1}$ has infinite order in $\mathcal{C}$, by comparing the horizontal $\iota_K$-complex induced by $(-K)_{2n+1,-1}$ with the complex induced by the $(2n+1,-1)$-cable of the figure-eight knot. The comparison is done by passing to the bordered setting and then applying involutive bordered Heegaard Floer theory of knot complements, developed by the first author in \cite{kang2022involutive}. Then we develop the arguments used in its proof to prove \Cref{thm:mainthm-indep}. 

Finally, In \Cref{sec:iteratedcable}, we present a full proof of \Cref{thm:mainthm} by finding an inductive step which allows us to deal with iterated cables. We then use it in \Cref{sec:stronglyrationallyslice} to prove \Cref{thm:mainthm-complexity} by considering the 2-fold cyclic branched double covers of $S^3$ along the given knots.

\subsection*{Questions}
We end this section with few questions. It can be easily verified that for each alternating knot $K$ which is torsion in $\mathcal{C}$, we have that $\mathfrak{A}(K) = \Arf(K)$. Naturally, we ask if the equality holds in general.

\begin{qn}If $K$ is torsion in $\mathcal{C}$, then is $\mathfrak{A}(K)=\Arf(K)$?\end{qn}

Also, since 
the almost local equivalence group of $\iota$-complexes and the almost local equivalence group of knot-like complexes over $\mathbb{F}_2[U,V]/(UV)$ are known to be totally ordered and isomorphic to $\mathbb{Z}^\infty$~\cite{dai2018infinite, DHST:2021-1}, we ask the following question. 
\begin{qn} Is $\mathfrak{I}^U_K$ isomorphic to $\mathbb{Z}^\infty \oplus \mathbb{Z}/2\mathbb{Z}$?\end{qn}

\subsection*{Notation and conventions}
Throughout the paper the unknot is denoted by $O$ and the figure-eight knot is denoted by $E$. Given a knot $K$ and a positive integer $n$, the connected sum of $n$ copies of $K$ is denoted by $nK$, the reverse of the mirror image of $K$ is denoted by $-K$, and the reverse of $K$ is denoted $K^r$. Also, given a closed 3-manifold $Y$, the connected sum of $n$ copies of $Y$ is denoted by $nY$. Lastly, two morphisms (chain maps, type-$D$ morphisms, etc) $f$ and $g$ are homotopic, we write $f\sim g$. In contrast, when two objects $A$ and $B$ are homotopy equivalent, we write $A\simeq B$ instead.

\subsection*{Acknowledgements} The authors wish to thank Matthew Stoffregen, Ian Zemke, and Jae Choon Cha for helpful discussions. The first author is supported by the Institute for Basic Science (IBS-R003-D1). The second author is partially supported by Samsung Science and Technology Foundation (SSTF-BA2102-02) and the POSCO TJ Park Science Fellowship.

\section{The horizontal almost $\iota_K$-local equivalence group}
\label{sec:horizontal}
We briefly recall the theory of involutive knot Floer homology and $\iota_K$-local equivalences. Given an oriented knot $K$ in $S^3$ (or in general, a rationally nullhomologous knot in a rational homology sphere with a prescribed self-conjugate $\spinc$ structure), one starts by representing it by a doubly-pointed Heegaard diagram $H=(\Sigma,\boldsymbol\alpha,\boldsymbol\beta,z,w)$. Note that one can recover $K$ from $H$ in the following way. The given sets of $\alpha$-curves and $\beta$-curves define handlebodies $H_\alpha$ and $H_\beta$ (in which the given sets of curves bound disks, respectively) such that $H_\alpha \cup H_\beta = S^3$. Choose directed paths $K_\alpha \subset H_\alpha$ and $K_\beta \subset H_\beta$, such that $K_\alpha$ goes from $z$ to $w$ and $K_\beta$ goes from $w$ to $z$. Then we have $K=K_\alpha \cup K_\beta$.

Then we consider the $2g$-dimensional smooth manifold $\Sym^g(\Sigma)$, where $g$ denotes the genus of $\Sigma$. This manifold contains two $g$-dimensional tori, $\mathbb{T}_\alpha$ and $\mathbb{T}_\beta$, defined by the product of all $\alpha$-curves and $\beta$-curves, respectively, and two 2-codimensional submanifolds $D_z$ and $D_w$, defined by the images of the natural maps
\[
\begin{split}
\Sym^{g-1}(\Sigma)\times \{z\} &\rightarrow \Sym^g(\Sigma), \\
\Sym^{g-1}(\Sigma)\times \{w\} &\rightarrow \Sym^g(\Sigma).
\end{split}
\]
Then a standard Floer-theoretic argument can be applied to construct the chain complex 
\[
CFK_{UV}(S^3,K):=\left( \bigoplus_{\mathbf{x}\in \mathbb{T}_\alpha \cap \mathbb{T}_\beta} \mathbb{F}_2[U,V] \cdot \mathbf x , \partial \right)
\]
by counting pseudoholomorphic disks of Maslov index 0 with boundary conditions given by $\mathbb{T}_\alpha$ and $\mathbb{T}_\beta$, while recording their algebraic intersection numbers with $D_z$ and $D_w$ by formal variables $U$ and $V$. This complex also comes equipped with a bigrading $(A,M)$ by a pair of integers $A$ and $M$, called the \emph{Alexander} and \emph{Maslov} gradings. It turns out that Heegaard moves induce homotopy equivalences between $CFK_{UV}$, and thus the homotopy type of $CFK_{UV}(S^3,K)$ depends only on the isotopy class of $K$ \cite{ozsvath2004holomorphic,zemke2017quasistabilization}. Furthermore, we have the following localization formula:
\[
(U,V)^{-1}CFK_{UV}(S^3,K) \simeq (U,V)^{-1}\mathbb{F}_2[U,V].
\]

In this paper, we will consider several truncations of the two-variable complex $CFK_{UV}(S^3,K)$. First of all, consider the ring $\mathcal{R}:= \mathbb{F}_2[U,V]/(UV)$. The $\mathcal{R}$-coefficient chain complex 
\[
CFK_{\mathcal{R}}(S^3,K):=CFK_{UV}(S^3,K)\otimes_{\mathbb{F}_2[U,V]} \mathcal{R},
\]
which can be seen as ignoring all ``diagonal'' terms in the differential of $CFK_{UV}(S^3,K)$, will be used in this paper. We will also use the $V=0$ truncation, namely
\[
CFK^-(S^3,K):=CFK_{UV}(S^3,K)\otimes_{\mathbb{F}_2[U,V]} \mathbb{F}_2[U,V]/(V),
\]
and the hat-flavored truncation:
\[
\widehat{CFK}(S^3,K):=CFK_{UV}(S^3,K)\otimes_{\mathbb{F}_2[U,V]} \mathbb{F}_2[U,V]/(U,V).
\]
The above truncations, except for the hat-flavored version, admit localization formulae as well:
\[
\begin{split}
    (U,V)^{-1}CFK_{\mathcal{R}}(S^3,K) &\simeq (U,V)^{-1}\mathcal{R}, \\
    U^{-1}CFK^-(S^3,K) &\simeq \mathbb{F}_2[U,U^{-1}].
\end{split}
\]

Now, given a doubly-pointed Heegaard diagram $H=(\Sigma,\boldsymbol\alpha,\boldsymbol\beta,z,w)$ representing $(S^3,K)$, we consider its conjugate diagram $\overline{H}=(-\Sigma,\boldsymbol\beta,\boldsymbol\alpha,w,z)$, where $-\Sigma$ denotes $\Sigma$ with the opposite orientation. The oriented (and doubly-basepointed) knot represented by $\overline{H}$ is $(K,w,z)$, i.e.\ the same oriented knot $K$ with the positions of basepoints exchanged. Since the holomorphic disk counts in $H$ are the same as those in $\overline{H}$ while the role of $U$ and $V$ are exchanged, the identity map
\[
\conjugate\colon CFK_{UV}(K,z,w)\rightarrow CFK_{UV}(K,w,z)
\]
is a chain homotopy \emph{skew}-equivalence. Choose a self-diffeomorphism $\phi$ of $S^3$ which is supported in a tubular neighborhood of $K$ and exchanges $z$ and $w$. Then it induces a homotopy equivalene
\[
\phi_{\ast}\colon CFK_{UV}(K,w,z)\rightarrow CFK_{UV}(K,z,w).
\]
Then we define $\iota_K=\phi_{\ast}\circ \conjugate$, which is again a homotopy skew-equivalence. Note that $\iota_K$ is defined up to ambiguities given by monodromies along loops of Heegaard moves; since homotopy equivalences induced by those loops are homotopic to identity due to the first-order naturality of Heegaard Floer theory \cite{juhasz2021naturality}, $\iota_K$ is defined uniquely up to homotopy. Since $\iota_K$ exchanges the actions of $U$ and $V$, it descends to $CFK_{\mathcal{R}}$ and $\widehat{CFK}$, but not $CFK^-$.

The involution $\iota_K$ satisfies the following properties, as shown in \cite{hendricks2017involutive} and \cite{zemke2019connected}.
\begin{itemize}
    \item $\iota^2_K \sim 1+\Phi\Psi \sim 1+\Psi\Phi$, where $\Phi$ and $\Psi$ are the formal derivatives of the differential of $CFK_{UV}(S^3,K)$ with respect to $U$ and $V$, respectively.
    \item $\Phi\iota \sim \iota\Psi$.
    \item $\iota$ is skew-graded, i.e. it maps an element of bidegree $(A,M)$ to an element of bidegree $(-A,M)$.
\end{itemize}
Using these properties, the concepts of $\iota_K$-complexes and $\iota_K$-local maps are defined in~\cite{hom2020linear} as follows. A free bigraded chain complex $C$ over $\mathbb{F}_2[U,V]$, together with a chain homotopy skew-equivalence $\iota:C\rightarrow C$, is an \emph{$\iota_K$-complex} if it satisfies the following conditions.
\begin{itemize}
    \item $(U,V)^{-1}C\simeq (U,V)^{-1}\mathbb{F}_2[U,V]$.
    \item $\iota^2 \sim 1+\Phi\Psi$, where $\Phi$ and $\Psi$ are the formal derivatives of the differential of $C$ with respect to $U$ and $V$, respectively.
\end{itemize}
Furthermore, a chain map $f\colon C\rightarrow D$ between $\iota_K$-complexes is an \emph{$\iota_K$-local map} if the following conditions are satisfied.
\begin{itemize}
    \item $(U,V)^{-1}f$ is a homotopy equivalence.
    \item $f\iota \sim \iota f$.
\end{itemize}
If there exists an $\iota_K$-local map from $C$ to $D$ and also from $D$ to $C$, then we say that $C$ and $D$ are \emph{$\iota_K$-locally equivalent} and $f$ is an \emph{$\iota_K$-local equivalence}. 


\begin{rem}
In this paper, we will sometimes use the notion of $\iota_K$-local maps for chain maps between complexes over $\mathcal{R}$. The definition of $\iota_K$-local maps in such a setting is identical to the original definition.
\end{rem}

The concept of almost local $\iota_K$-complexes is also defined in \cite{hom2020linear} by considering instead the hat-flavored involution. 

\begin{defn}
A free bigraded chain complex $C$ over $\mathbb{F}_2[U,V]$, together with a skew-graded homotopy equivalence $\iota\colon\widehat{C}\rightarrow \widehat{C}$, where $\widehat{C}=C\otimes_{\mathbb{F}_2[U,V]}\mathbb{F}_2[U,V]/(U,V)$, is an \emph{almost $\iota_K$-complex} if the following conditions are satisfied.
\begin{itemize}
    \item $(U,V)^{-1}C\simeq (U,V)^{-1}\mathbb{F}_2[U,V]$.
    \item $\iota^2 \sim 1+\Phi\Psi$ as skew-graded chain endomorphisms of $\widehat{C}$, where $\Phi$ and $\Psi$ are the formal derivatives of the differential of $C$ with respect to $U$ and $V$, respectively.
\end{itemize}
Furthermore, a chain map $f\colon C\rightarrow D$ is an \emph{almost $\iota_K$-local map} if the following conditions are satisfied.
\begin{itemize}
    \item $(U,V)^{-1}f$ is a homotopy equivalence.
    \item The induced map $\widehat{f}:\widehat{C}\rightarrow \widehat{D}$, where $\widehat{C}$ and $\widehat{D}$ are the truncations of $C$ and $D$ by $U=V=0$, satisfies $\widehat{f} \iota_K \sim \iota_K \widehat{f}$.
\end{itemize}
If almost $\iota_K$-local maps exist between two almost $\iota_K$-complexes in both directions, we say that they are \emph{almost $\iota_K$-locally equivalent}.
\end{defn}

Note that, given a knot $K$, its knot Floer complex $CFK_{UV}(S^3,K)$, together with the action of $\iota_K$, defines an $\iota_K$-complex. If one only considers the action of $\iota_K$ on $\widehat{CFK}(S^3,K)$, one then obtains an almost $\iota_K$-complex. It is shown in \cite{zemke2019connected} that the (almost) $\iota_K$-local equivalence class of the resulting $\iota_K$-complex (or almost $\iota_K$-complex) depends only on the concordance class of $K$.

Motivated by the definitions of $\iota_K$-complexes and almost $\iota_K$-complexes, we now develop a similar framework by mimicking the structure of $CFK^-(S^3,K)$ of knots $K$ in $S^3$, together with the action of $\iota_K$ on $\widehat{CFK}(S^3,K)$. For simplicity, we make the following definitions. Given a chain complex $C$ over $\mathbb{F}_2[U]$, we say that the chain complex $\widehat{C}=C\otimes_{\mathbb{F}_2[U]} \mathbb{F}_2[U]/(U)$ induced by taking $U=0$ is the \emph{hat-flavored truncation} of $C$. Also, given a chain map $f\colon C\rightarrow D$ between chain complexes over $\mathbb{F}_2[U]$, the induced map $\widehat{f}\colon\widehat{C}\rightarrow \widehat{D}$ between $\widehat{C}$ and $\widehat{D}$ is called the \emph{hat-flavored truncation} of $f$. Sometimes we will confuse $f$ and $\widehat{f}$ to reduce the amount of symbols used.


\begin{defn}
\label{defn:horalmost}
A \emph{horizontal almost $\iota_K$-complex} is a pair $(C,\iota)$ of a bigraded complex $C$ of finitely generated free modules over $\mathbb{F}_2[U]$ and a chain homotopy equivalence $\iota\colon\widehat{C}\rightarrow \widehat{C}$, where $\widehat{C}$ is the hat-flavored truncation of $C$, such that the following conditions are satisfied. 
\begin{itemize}
    \item The formal variable $U$ has bigrading $(-1,-2)$.
    \item $U^{-1}C\simeq \mathbb{F}_2[U,U^{-1}]$
    \item $\iota$ is skew-graded.
    \item $\Phi\iota\Phi\iota \sim \iota\Phi\iota\Phi$.
    \item $\iota^2 \sim 1+\Phi\iota\Phi\iota$.
    \item There exists a chain map $f\colon C\rightarrow C$ whose hat-flavored truncation is homotopic to $\iota\Phi\iota$.
\end{itemize}
Here, $\Phi$ denotes the formal derivative of the differential of $C$ with respect to the formal variable $U$, which is a chain map which is well-defined up to homotopy.
\end{defn}

\begin{rem}
Since we do not have the formal variable $V$ anymore, we had to represent $\Psi$ in terms of $\Phi$ and $\iota$. Luckily, we have $\Phi\iota \sim \iota\Psi$, $\iota^2 \sim 1+\Phi\Psi$, and $\iota^4 \sim \id$, so we replaced $\Psi$ using the formula
\[
\begin{split}
\Psi \sim \iota^{-1}\Phi\iota &= \iota \circ (1+\Psi\Phi)\circ \Phi\iota \\
&= \iota \circ (\Phi + \Psi\Phi^2) \circ \iota \sim \iota\Phi\iota.
\end{split}
\]
Note that, in the last line of the above equation, we used the fact that $\Phi^2$ is nullhomotopic, which follows from the observation that the value of $\Phi$ vanishes on cycles and $\ker(\partial)\subset \ker(\Phi) $.
The last condition in \Cref{defn:horalmost} requires that one can find $\Psi$ as an $\mathbb{F}_2[U]$-linear chain map. This condition is crucial in the proof that the tensor product operation between two horizontal almost $\iota_K$-complexes is commutative.
\end{rem}


\begin{defn}
A degree-preserving chain map $f\colon C\rightarrow D$ between horizontal almost $\iota_K$-complexes $(C,\iota_C)$ and $(D,\iota_D)$ is \emph{almost $\iota_K$-local map} if it satisfies the following conditions.
\begin{itemize}
    \item $f$ is \emph{local}, i.e.\ the localized map $U^{-1}f\colon U^{-1}C\rightarrow U^{-1}D$ is a homotopy equivalence.
    \item $\iota_C \widehat{f} \sim \widehat{f} \iota_D$, where $\widehat{f}$ is the hat-flavored truncation of $f$.
\end{itemize}
Furthermore, if there exist almost $\iota_K$-local maps $f\colon C\rightarrow D$ and $g:D\rightarrow C$, then we say that $(C,\iota_C)$ and $(D,\iota_D)$ are \emph{almost $\iota_K$-locally equivalent}.
\end{defn}

We denote the almost $\iota_K$-local equivalence classes of horizontal almost $\iota_K$-complexes by $\mathfrak{I}^U _K$. We endow this set with a tensor product operation $\otimes$ as $(C,\iota_C)\otimes (D,\iota_D)=(C\otimes D,\iota_{C\otimes D})$, where $\iota_{C\otimes D}$ is defined as
\[
\iota_{C\otimes D} = \iota_C \otimes \iota_D + \Phi\iota_C \otimes \iota_D \Phi .
\]
Note that this operation is modeled after the connected sum formula for involutive knot Floer homology \cite[Theorem 1.1]{zemke2019connected}, which reads
\[
\iota_{K_1 \# K_2} = \iota_{K_1} \otimes \iota_{K_2} + \Phi\iota_{K_1} \otimes \Psi\iota_{K_2}.
\]

\begin{prop}
$(\mathfrak{I}^U _K,\otimes)$ is an abelian group.
\end{prop}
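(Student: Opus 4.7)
The plan is to verify the group axioms in sequence: that the formula given for $\iota_{C\otimes D}$ does produce a horizontal almost $\iota_K$-complex, that $\otimes$ descends to $\mathfrak{I}^U_K$, and that it is associative and commutative with an identity and inverses. As the remark after \Cref{defn:horalmost} anticipates, commutativity will be the hardest step, and it is precisely there that the chain-level lift $\Psi$ built into the definition of a horizontal almost $\iota_K$-complex becomes essential.

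First, I would verify that $(C\otimes D,\iota_{C\otimes D})$ satisfies all axioms of \Cref{defn:horalmost}. Localization $U^{-1}(C\otimes D)\simeq \mathbb{F}_2[U,U^{-1}]$ and skew-gradedness are routine, and the relations $\iota_{C\otimes D}^2 \sim 1+\Phi\iota\Phi\iota$ and $\Phi\iota\Phi\iota \sim \iota\Phi\iota\Phi$ for the tensor product should follow from direct expansion, using the Leibniz rule $\Phi_{C\otimes D}=\Phi_C\otimes 1 + 1\otimes\Phi_D$, the corresponding identities on each factor, and the nullhomotopy of $\Phi^2$. A chain-level lift $\Psi_{C\otimes D}$ can then be assembled from $\Psi_C,\Psi_D,\iota_{C,D}$, and $\Phi_{C,D}$. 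Descent of $\otimes$ to $\mathfrak{I}^U_K$ (since $f\otimes \id_D$ is almost $\iota_K$-local when $f$ is), associativity on triple tensor products, and the identity element $(\mathbb{F}_2[U],\id)$ (for which $\Phi=0$ collapses $\iota_{C\otimes \mathbb{F}_2[U]}$ to $\iota_C$ under the canonical isomorphism $C\otimes \mathbb{F}_2[U]\cong C$) are then essentially formal verifications.

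The crux is commutativity: one must show $(C\otimes D,\iota_{C\otimes D}) \sim (D\otimes C,\iota_{D\otimes C})$ in $\mathfrak{I}^U_K$. The naive swap $\tau\colon C\otimes D\to D\otimes C$ is a chain isomorphism but does not intertwine the involutions on the nose; expansion reduces the required compatibility $\iota_{D\otimes C}\tau \sim \tau\iota_{C\otimes D}$ to a homotopy
\[
\iota_D\Phi \otimes \Phi\iota_C \sim \Phi\iota_D \otimes \iota_C\Phi
\]
on $\widehat{C}\otimes\widehat{D}$. Using the final axiom of \Cref{defn:horalmost} one first establishes the auxiliary homotopies $\iota\Phi\sim \Psi\iota$ and $\Phi\iota\sim \iota\Psi$ on each factor (derived from $\iota^2\sim 1+\Phi\iota\Phi\iota$, the identity $\Phi\iota\Phi\iota\sim \iota\Phi\iota\Phi$, and the nullhomotopy of $\Phi^2$), rewriting both sides into comparable expressions built out of $\iota$ and $\Psi$. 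Closing the remaining gap requires a secondary chain homotopy assembled from the full data $\{\iota_C,\iota_D,\Phi_C,\Phi_D,\Psi_C,\Psi_D\}$, possibly after modifying $\tau$ by a correction term, and I expect this explicit construction to be the main technical obstacle of the proof.

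Finally, inverses come from dualization: for $(C,\iota_C)$ I would set $C^\vee=\operatorname{Hom}_{\mathbb{F}_2[U]}(C,\mathbb{F}_2[U])$ with transposed involution $\iota_{C^\vee}$, verify that $C^\vee$ is again a horizontal almost $\iota_K$-complex, and construct the almost $\iota_K$-local equivalence $C\otimes C^\vee \sim (\mathbb{F}_2[U],\id)$ via the evaluation and coevaluation maps, modelled on the corresponding constructions for $\iota_K$-complexes in~\cite{zemke2019connected,hom2020linear}.
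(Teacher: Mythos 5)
Your outline reproduces the paper's strategy — well-definedness via direct expansion and the Leibniz rule, associativity by expansion, $C_O=(\mathbb{F}_2[U],\id)$ as the unit, duality with trace and cotrace maps for inverses — and correctly singles out commutativity as the hard step and the last axiom of \Cref{defn:horalmost} as the ingredient that makes it work. But one warning on that step: the uncorrected swap $\tau$ definitively fails, not merely ``on the nose.'' Writing $\iota_1 = \iota\otimes\iota + \Phi\iota\otimes\iota\Phi$ and $\iota_2 = \iota\otimes\iota + \iota\Phi\otimes\Phi\iota$ (the latter being $\iota_{D\otimes C}$ transported to $C\otimes D$ by conjugating with $\tau$), the homotopy $\Phi\iota\otimes\iota\Phi\sim\iota\Phi\otimes\Phi\iota$ that you reduce to is simply false: after reducing, $\widehat{C}\otimes\widehat{D}$ has zero differential so homotopic means equal, and already on $\widehat{C}_E\otimes\widehat{C}_E$ these two maps send $a\otimes a$ to $b\otimes c$ and $c\otimes b$ respectively. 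So no amount of auxiliary homotopies will make $\iota_1\sim\iota_2$, and your hedge ``possibly after modifying $\tau$'' is not optional — the correction is the whole content of the step.

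The paper's resolution sidesteps comparing $\iota_1$ and $\iota_2$ altogether. It takes $F = \id\otimes\id + f\otimes\Phi$ on $C\otimes D$, where $f$ is the $\mathbb{F}_2[U]$-linear chain lift of $\iota\Phi\iota$ provided by the last axiom (this is exactly where that axiom is used, as you predicted). Then $F^2 \sim \id$ since $\Phi^2\sim 0$, so $F$ is local, and the computation using $\iota^2\sim 1+\Phi\iota\Phi\iota\sim 1+\iota\Phi\iota\Phi$ gives $\iota_1 F \sim \iota\otimes\iota$ and $F\iota_2 \sim \iota\otimes\iota$, so $F$ is an almost $\iota_K$-local map between $(C\otimes D,\iota_1)$ and $(C\otimes D,\iota_2)$ in both directions. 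The key conceptual point, which your proposed reduction obscures, is that this never asserts $\iota_1\sim\iota_2$: the two involutions remain genuinely non-homotopic, and all one gets, and all the definition of almost $\iota_K$-local equivalence requires, is a local map intertwining them up to homotopy.
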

\begin{proof}
We first prove that the group operation is well-defined. Given $C,D\in \mathfrak{I}^U _K$, formally differentiating both sides of
\[
\partial_{C\otimes D} = \partial_C \otimes 1 + 1 \otimes \partial_D
\]
with respect to $U$ gives $\Phi_{C\otimes D}=1\otimes \Phi + \Phi\otimes 1$. From now on we drop the subscripts when the meaning is clear from context.

Thus we have 
\[
\begin{split}
\Phi_{C\otimes D}\iota_{C\otimes D} &\sim (1\otimes \Phi + \Phi \otimes 1)\circ (\iota \otimes \iota + \Phi\iota \otimes \iota\Phi) \\
&\sim \Phi\iota \otimes \iota + \iota \otimes \Phi\iota + \Phi\iota \otimes \Phi\iota\Phi ,
\end{split}
\]
which squares to 
\[
\begin{split}
\Phi_{C\otimes D}\iota_{C\otimes D}\Phi_{C\otimes D}\iota_{C\otimes D} &\sim \Phi\iota\Phi\iota\otimes\iota^2 + \Phi\iota^2 \otimes \iota\Phi\iota+ \Phi\iota\Phi\iota\otimes \iota\Phi\iota\Phi + \iota\Phi\iota\otimes\Phi\iota^2 + \iota^2 \otimes \Phi\iota\Phi\iota + \Phi\iota\Phi\iota\otimes \Phi\iota\Phi\iota \\
&\sim 1\otimes \Phi\iota\Phi\iota + \Phi\iota\Phi\iota\otimes 1 +\Phi\otimes \iota\Phi\iota + \iota\Phi\iota\otimes \Phi.
\end{split}
\]
On the other hand, since 
\[
\begin{split}
\iota_{C\otimes D}\Phi_{C\otimes D} &= (\iota \otimes \iota + \Phi\iota \otimes \iota\Phi) \circ (1\otimes \Phi + \Phi \otimes 1)\\
&\sim \iota\otimes\iota\Phi + \iota\Phi\otimes \iota + \Phi\iota\Phi \otimes \iota\Phi,
\end{split}
\]
we have 
\[
\iota_{C\otimes D}\Phi_{C\otimes D}\iota_{C\otimes D}\Phi_{C\otimes D} \sim 1\otimes \Phi\iota\Phi\iota + \Phi\iota\Phi\iota\otimes 1 +\Phi\otimes \iota\Phi\iota + \iota\Phi\iota\otimes \Phi
\]
as well, which implies that $$\Phi_{C\otimes D}\iota_{C\otimes D}\Phi_{C\otimes D}\iota_{C\otimes D} \sim \iota_{C\otimes D}\Phi_{C\otimes D}\iota_{C\otimes D}\Phi_{C\otimes D}.$$ Also, we have 
\[
\begin{split}
\iota_{C\otimes D}^2 &\sim \iota^2 \otimes \iota^2 + \iota\Phi\iota \otimes \iota^2 \Phi + \Phi\iota^2 \otimes \iota\Phi\iota + \Phi\iota\Phi\iota\otimes \iota\Phi\iota\Phi \\
&\sim 1\otimes 1 + \Phi\iota\Phi\iota\otimes 1 + 1\otimes \Phi\iota\Phi\iota + \Phi\otimes\iota\Phi\iota + \iota\Phi\iota\otimes\Phi \\
&\sim 1+\Phi_{C\otimes D}\iota_{C\otimes D}\Phi_{C\otimes D}\iota_{C\otimes D},
\end{split}
\]
and furthermore, if $f\colon C\rightarrow C$ and $g:D\rightarrow D$ are chain maps such that their hat-flavored truncations $\widehat{f}$ and $\widehat{g}$ are homotopic to $\iota\Phi\iota$, then we have
\[
\begin{split}
    \iota_{C\otimes D}\Phi_{C\otimes D}\iota_{C\otimes D} &= \iota\Phi\iota\otimes \iota^2 + \iota\Phi^2 \iota\otimes \iota^2\Phi + \Phi\iota\Phi\iota \otimes \iota\Phi\iota + \Phi\iota\Phi^2\iota\otimes \iota\Phi\iota\Phi \\  &\quad + \iota^2 \otimes \iota\Phi\iota + \iota\Phi\iota \otimes \iota\Phi\iota\Phi + \Phi\iota^2 \otimes \iota\Phi^2 \iota + \Phi\iota\Phi\iota \otimes \iota\Phi^2 \iota\Phi \\
    &\sim \iota\Phi\iota \otimes 1 + 1\otimes\iota\Phi\iota,
\end{split}
\]
which is represented by the chain map $f\otimes 1+1\otimes g$. So we deduce that $C\otimes D$ is also an element of $\mathfrak{I}^U _K$, and thus the group operation is well-defined.

Next, we show that the group operation is associative. Choose any $A,B,C\in\mathfrak{I}^U _K$. Then we have 
\[
\begin{split}
\iota_{A\otimes(B\otimes C)} &\sim \iota\otimes(\iota\otimes \iota + \Phi\iota\otimes\iota\Phi) + \Phi\iota \otimes (\iota\Phi \otimes \iota + \iota\otimes \iota\Phi) \\ 
&\sim \iota\otimes\iota\otimes\iota + \iota+\Phi\iota+\iota\Phi+\Phi\iota\otimes\iota\Phi\otimes\iota + \Phi\iota\otimes\iota\otimes\iota\Phi.
\end{split}
\]
On the other hand, we have 
\[
\begin{split}
\iota_{(A\otimes B)\otimes C} &\sim (\iota\otimes\iota+\Phi\iota\otimes\iota\Phi)\otimes \iota + (\Phi\iota\otimes\iota + \iota\otimes\Phi\iota)\otimes \iota\Phi \\ 
&\sim \iota\otimes\iota\otimes\iota+\Phi\iota\otimes\iota\Phi\otimes\iota + \Phi\iota\otimes\iota\otimes\iota\Phi + \iota\otimes\Phi\iota\otimes\iota\Phi.
\end{split}
\]
Hence $\iota_{A\otimes(B\otimes C)} \sim \iota_{(A\otimes B)\otimes C}$, and thus the group operation is associative.

Now we show that every element of $\mathfrak{I}^U _K$ admits an inverse. Given any $C\in\mathfrak{I}^U _K$, endow its dual complex $C^{\ast}$ with the dual involution $\iota^{\ast}$, and note that $\Phi_{C^{\ast}}=\Phi^{\ast}_C$. Consider the trace map $\tr\colon C\otimes C^{\ast} \rightarrow \mathbb{F}[U]$ and its dual $\cotr\colon\mathbb{F}[U]\rightarrow C\otimes C^{\ast}$. Then, by the tensor-hom adjunction, we have 
\[
\begin{split}
\tr\circ \iota_{C\otimes C^{\ast}} &\sim \tr\circ (\iota \otimes \iota^{\ast} + \Phi\iota \otimes \iota^{\ast}\Phi^{\ast}) \\ 
&\sim \tr \circ (\iota^2 \otimes 1 + \Phi\iota\Phi\iota \otimes 1) \sim \tr
\end{split}
\]
and also 
\[
\begin{split}
\iota_{C\otimes C^{\ast}} \circ \cotr &\sim (\iota \otimes \iota^{\ast} + \Phi\iota \otimes \iota^{\ast}\Phi^{\ast}) \circ \cotr\\ 
&\sim (\iota^2 \otimes 1 + \Phi\iota\Phi\iota \otimes 1) \circ \cotr \sim \cotr.
\end{split}
\]
Hence $C\otimes C^{\ast}$ is almost $\iota_K$-localy trivial, and hence $C^{\ast}$, endowed with $\iota^{\ast}$, is the inverse of $C$ in $\mathfrak{I}^U_K$. Thus $\mathfrak{I}^U _K$ is a group.

It remains to show that the group operation is commutative. Given two elements $C,D$ in $\mathfrak{I}^U_K$, consider two horizontal almost $\iota_K$-complexes $(C\otimes D,\iota_1)$ and $(C\otimes D,\iota_2)$, where $\iota_1$ and $\iota_2$ are defined as 
\[
    \iota_1 = \iota\otimes \iota + \Phi\iota \otimes \iota\Phi \qquad\text{and}\qquad
    \iota_2 = \iota\otimes \iota + \iota\Phi \otimes \Phi\iota,
\]
respectively. We claim that they are almost $\iota_K$-locally equivalent. To prove this, define an $\mathbb{F}_2[U]$-linear chain map $f\colon C\otimes D\rightarrow C\otimes D$ by 
\[
F = \id\otimes \id + f \otimes \Phi,
\]
where $f\colon C\rightarrow C$ is a chain map whose hat-flavored truncation $\widehat{f}$ is homotopic to $\iota\Phi\iota$. Then $F$ satisfies $U^{-1}F \sim \id$ since 
\[
F^2 = \id\otimes \id + f^2 \otimes \Phi^2 \sim \id\otimes \id,
\]
 and we also have 
\[
    \iota_1 F = \iota\otimes \iota + \Phi\iota \otimes \iota\Phi + \iota^2 \Phi\iota \otimes \iota\Phi + \Phi\iota^2 \Phi\iota \otimes \iota\Phi^2 \sim \iota\otimes \iota
\]
since $\iota^2 \sim 1+\Phi\iota\Phi\iota \sim 1+\iota\Phi\iota\Phi$ and $\Phi^2 \sim 0$. Similarly, we also have 
\[
F\iota_2 = \iota\otimes \iota + \iota\Phi \otimes \Phi\iota + \iota\Phi\iota^2 \otimes \Phi\iota + \iota\Phi\iota^2 \Phi \otimes \Phi^2 \iota \sim \iota\otimes\iota .
\]
So $\iota_1 F + F \iota_2 \sim 0$, and thus $F$ is an almost $\iota_K$-local map from $(C\otimes D,\iota_1)$ to $(C\otimes D,\iota_2)$. However the same argument also shows that $F$ also works as an almost $\iota_K$-local map from $(C\otimes D,\iota_2)$ to $(C\otimes D,\iota_1)$. Therefore we deduce that $\mathfrak{I}^U _K$ is an abelian group.
\end{proof}

\begin{rem}
\label{rem:homomorphism}
As in the case of (almost) $\iota_K$-local maps, given a knot $K$, its minus-flavored knot Floer complex $CFK^-(S^3,K)$, together with the action of $\iota_K$ on $\widehat{CFK}(S^3,K)$, defines an element of $\mathfrak{I}^U_K$, which depends only on the concordance class of $K$. This gives a well-defined map
\[
\mathfrak{A}:\mathcal{C}\rightarrow \mathfrak{I}^U_K.
\]
It follows from \cite{zemke2019connected} that $\mathfrak{A}$ is a group homomorphism.
\end{rem}

We define a partial order on the group $\mathfrak{I}^U _K$ in the following way. Given two elements $C$ and $D$ of $\mathfrak{I}^U _K$, we define $C \le D$ if there exists an almost $\iota_K$-local map from $C$ to $D$. It is clear that the relation $\le$ on $\mathfrak{I}^U_K$ is indeed a partial order, since $C_1 \le C_2$ implies $C_1 \otimes D \le C_2 \otimes D$ for any $D\in \mathfrak{I}^U_K$. If neither $C \le D$ nor $D \le C$ holds, we say that $C$ and $D$ are \emph{incomparable}.

\begin{exmp}
The trivial complex $C_O$ 
is defined as the trivial complex $\mathbb{F}_2[U]$, whose generator 1 has bidegree $(0,0)$, together with the trivial involution on its hat-flavored truncation. It is clear that $C_O$ represents the identity element of $\mathfrak{I}^U_K$ and this complex is represented by the involutive knot Floer homology of the unknot $O$.
\end{exmp}

\begin{exmp}
\label{exmp:figureeight}
Consider the chain complex $C_E$, generated over $\mathbb{F}_2[U]$ by elements $a,b,c,d,x$, where $a$ and $x$ has bidegree $(0,0)$. The differential is given by 
\[
\partial a=Ub,\,\partial c=Ud,\,\partial b=\partial d=\partial x=0.
\]
Its hat-flavored truncation $\widehat{C}_E$ is still generated by $a,b,c,d,x$, but has zero differential. We define an involution $\iota_E$ on $\widehat{C}_E$ by
\[
\iota_E(a)=a+x,\,\iota_E(b)=c,\,\iota_E(c)=b,\,\iota_E(d)=d,\,\iota_E(x)=x+d.
\]
Then $C_E=(C_E,\iota_E)$ is an element of $\mathfrak{I}^U_K$. Note that this complex is represented by the involutive knot Floer homology of the figure-eight knot $E$. 

To see that $C_E$ is incomparable to $C_O$ in $\mathfrak{I}^U_K$, suppose on the contrary that it is. Since the figure-eight knot is negative-amphichiral, we have $-[C_E]=[C_E] \in \mathfrak{I}^U_K$. Therefore, we should have an almost $\iota_K$-local map $f\colon C_E\rightarrow C_O$. Since $x$ generates the homology of $U^{-1}C_E$, we should have $f(x)=1$. However, since $\widehat{C}_E$ has zero differential and $C_O$ has trivial involution, we should also have
\[
f(x)=f(a+\iota_E(a)) = f(a)+f(\iota_E(a)) = f(a)+\iota_{O}(f(a)) = 0,
\]
a contradiction. Therefore $C_E$ and $C_O$ are incomparable.
\end{exmp}

\Cref{exmp:figureeight} already shows that $\mathfrak{I}^U_K$ is not totally ordered. However, we will see that, after taking quotient by the order 2 subgroup generated by $C_E$, the induced partial order on the quotient group is a total ordering. In other words, $\mathfrak{I}^U_K$ is totally ordered modulo $C_E$.

\begin{lem}
\label{lem:figeightmap}
Let $(C,\iota)$ be a horizontal almost $\iota_K$-complex. Suppose that there exists an element $a^{\prime}$ and a cycle $x^{\prime}$ in $C$ such that the homology class of $x^{\prime}$ generates the homology of $U^{-1}C$ and $a^{\prime}+\iota(a^{\prime})=x^{\prime}$ in $\widehat{C}$. Then there exists an almost $\iota_K$-local map from $C_E$ to $C$.
\end{lem}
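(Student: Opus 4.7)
The plan is to construct an $\mathbb{F}_2[U]$-linear chain map $f\colon C_E \to C$ by directly specifying its values on the five generators $a,b,c,d,x$ of $C_E$ and then verifying $U$-locality and almost $\iota_K$-equivariance. Guided by the parallel between the hypothesis $\bar a' + \iota \bar a' = \bar x'$ in $\widehat C$ and the relation $a + \iota_E(a) = x$ in $\widehat{C_E}$, the natural ansatz is
\[
 f(a) = a',\quad f(x) = x',\quad f(b) = U^{-1}\partial a',\quad f(c) = \widetilde{\iota a'},\quad f(d) = U^{-1}\partial f(c),
\]
where $\widetilde{\iota a'} \in C$ denotes a chosen chain-level lift of $\iota\bar a' \in \widehat C$.

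For these formulas to define a legitimate chain map, the divisibilities $\partial a' \in UC$ and $\partial f(c) \in UC$ must hold, equivalently $\bar a'$ and $\iota\bar a'$ must be $\widehat\partial$-cycles in $\widehat C$. Applying $\widehat\partial$ to the hypothesis and using that $x'$ is a cycle only yields $\widehat\partial \bar a' \in \widehat C$ is $\iota$-invariant, not that it vanishes. Thus the first technical step is to modify $a'$, without changing the validity of the hypothesis up to boundary, so that $\widehat\partial\bar a' = 0$; this amounts to producing an $\iota$-invariant primitive of $\widehat\partial\bar a'$ in the relevant bidegree, and will be the main obstacle of the proof. Once $\bar a'$ has been so adjusted, $\bar{f(b)}$ is automatically a cycle in $\widehat C$; since $\iota$ is a chain map on $\widehat C$, $\iota\bar{f(b)}$ is also a cycle, and any chain-level lift to $C$ has differential in $UC$, making $f(c)$ and $f(d)$ well-defined.

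The locality condition is immediate from $f(x)=x'$, which represents the generator of $H_\ast(U^{-1}C)$. For the almost $\iota_K$-equivariance $\iota_C\widehat{f}\sim\widehat{f}\iota_E$, I will check each generator: on $a$ the discrepancy vanishes identically by the hypothesis; on $b$ by the construction of $f(c)$ as a lift of $\iota\bar{f(b)}$; on $c$, $d$, and $x$ one is left with expressions of the form $\Phi\iota\Phi\iota\bar{f(b)}$, $\Phi\iota\Phi\iota\bar a'$, and $\iota\bar{x'} + \bar{x'}$. Each of these can be identified with a chain-level boundary by successive applications of the structural relations $\iota^2 \sim 1 + \Phi\iota\Phi\iota$, $\Phi\iota\Phi\iota \sim \iota\Phi\iota\Phi$, $\Phi^2 \sim 0$, and—crucially—the postulated existence in \Cref{defn:horalmost} of an $\mathbb F_2[U]$-linear chain-level lift of $\iota\Phi\iota$, which allows us to promote the various homotopy-level identities to honest chain-level boundaries in $\widehat C$. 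The bidegree constraints will further restrict the shape of the correction chains and make this verification manageable.
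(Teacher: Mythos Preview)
Your overall approach---defining $f$ on generators by $f(a)=a'$, $f(x)=x'$, $f(b)=b'$ with $\partial a'=Ub'$, $f(c)$ a lift of $\iota\bar b'$, and $f(d)=d'$ with $\partial c'=Ud'$---is exactly the paper's construction. (Note the inconsistency in your write-up: you first set $f(c)$ to be a lift of $\iota\bar a'$, but later correctly describe it as a lift of $\iota\overline{f(b)}$; the latter is what you need.)

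The genuine gap is your handling of what you call ``the main obstacle.'' You propose to modify $a'$ inside the given complex so that $\widehat\partial\bar a'=0$, by finding an $\iota$-invariant primitive of $\widehat\partial\bar a'$. But an $\iota$-invariant boundary need not admit an $\iota$-invariant primitive: indeed, $\bar a'$ itself is a primitive, and $(1+\iota)\bar a'=\bar x'$, which represents a nontrivial homology class, so $\bar a'$ is certainly not $\iota$-invariant; finding a better primitive amounts to solving $(1+\iota)c=\bar x'$ with $c$ a $\widehat\partial$-cycle, which is not obviously possible. The paper sidesteps this entirely: one simply replaces $(C,\iota)$ by a homotopy-equivalent horizontal almost $\iota_K$-complex whose differential is \emph{reduced}, i.e.\ $\widehat\partial=0$. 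This is a standard operation for finitely generated free $\mathbb{F}_2[U]$-complexes, and the hypothesis transfers along the homotopy equivalence. Once $\widehat\partial=0$, the divisibility $\partial a'\in UC$ is automatic and, more importantly, every homotopy on $\widehat C$ becomes an equality.

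With that reduction in hand, the $\iota$-equivariance check collapses to a short computation: since $\iota^2=1+\Phi\iota\Phi\iota$ holds as an \emph{equality} on $\widehat C$, one gets
\[
d'=\Phi\iota\Phi\iota(a')=(1+\iota^2)(a')=(1+\iota)^2(a')=(1+\iota)(x')=x'+\iota(x'),
\]
from which $\iota_C\widehat f=\widehat f\,\iota_E$ follows on all five generators by direct substitution. You do not need the last axiom of \Cref{defn:horalmost} (the $\mathbb{F}_2[U]$-linear lift of $\iota\Phi\iota$) at all here; the structural relations $\Phi\iota\Phi\iota\sim\iota\Phi\iota\Phi$ and $\Phi^2\sim 0$ are likewise unnecessary for this lemma once the complex is reduced.
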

\begin{proof}
We may assume without loss of generality that the differential $\partial$ of $C$ is reduced, i.e.\ the induced differential on $\widehat{C}$ is zero. Then there exists an element $b^{\prime}\in C$ such that $\partial a^{\prime}=Ub^{\prime}$. Choose a lift $c^{\prime} \in C$ of $\iota(b^{\prime})\in \widehat{C}$, and choose again an element $d^{\prime} \in C$ such that $\partial c^{\prime} = Ud^{\prime}$. Then we have in $\widehat{C}$ that 
\[
d^{\prime} = \Phi\iota\Phi\iota(a^{\prime}) = (1+\iota^2)(a^{\prime}) = (1+\iota)^2(a^{\prime}) = x^{\prime} + \iota(x^{\prime}).
\]
Therefore the map from $C_E$ to $C$ which maps $a,b,c,d,x$ to $a^{\prime},b^{\prime},c^{\prime},d^{\prime},x^{\prime}$, respectively, is an almost $\iota_K$-local equivalence.
\end{proof}

\begin{thm}
\label{thm:comparison}
Two elements $C,D \in \mathfrak{I}^U_K$ are incomparable if and only if $[C]=[D]+[C_E]$.
\end{thm}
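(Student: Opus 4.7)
Setting $X := C \otimes D^*$, the invariance of $\le$ under tensoring with a fixed element of $\mathfrak{I}^U_K$ reduces the theorem to the single claim that $X$ is incomparable with the identity element $C_O$ if and only if $[X] = [C_E]$. The ``if'' direction is immediate from the incomparability of $C_E$ and $C_O$ shown in \Cref{exmp:figureeight}. For the converse, my plan is to establish separately
\begin{itemize}
\item[(i)] $X \not\le C_O$ implies $C_E \le X$, and
\item[(ii)] $C_O \not\le X$ implies $X \le C_E$,
\end{itemize}
and combine them with the incomparability hypothesis to conclude $[X] = [C_E]$.

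The main step is (i). I will fix a cycle $x' \in X$ of bidegree $(0,0)$ whose class generates $H_*(U^{-1}X) \cong \mathbb{F}_2[U,U^{-1}]$, and argue by contraposition that if $x'$ does \emph{not} lie in $(1+\iota_X)(\widehat{X}) + \partial(\widehat{X})$ inside $\widehat{X}^{(0,0)}$, then one can construct an almost $\iota_K$-local map $f \colon X \to C_O$, contradicting $X \not\le C_O$. The construction: pick a linear functional $\varphi \colon \widehat{X}^{(0,0)} \to \mathbb{F}_2$ sending $x'$ to $1$ and vanishing on the above subspace, extend by zero on all other bigradings to obtain a chain map $\widehat{f}\colon \widehat{X} \to \widehat{C_O}$ which commutes with the involutions up to homotopy, and then lift $\widehat{f}$ to $f\colon X \to \mathbb{F}_2[U]$ using the freeness of $X$ over $\mathbb{F}_2[U]$ together with the fact that $C_O = \mathbb{F}_2[U]$ is supported only on the diagonal $\{(-k,-2k) : k\ge 0\}$ of bigradings. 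Granting the claim, I write $x' = (1+\iota_X)(a) + \partial z$ in $\widehat{X}$, choose a lift $\tilde z \in X$ of $z$, and replace $x'$ by the homologous cycle $x'' := x' - \partial \tilde z$, which satisfies $(1+\iota_X)(a) = x''$ in $\widehat{X}$; \Cref{lem:figeightmap} then produces the desired almost $\iota_K$-local map $C_E \to X$.

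For (ii) I use duality: since dualization reverses the direction of almost $\iota_K$-local maps, the hypothesis $C_O \not\le X$ is equivalent to $X^* \not\le C_O^* \simeq C_O$. Applying (i) to $X^*$ gives $C_E \le X^*$; dualizing this and using the fact that $[C_E^*] = [C_E]$ (since $[C_E]$ has order two in $\mathfrak{I}^U_K$, as observed in \Cref{exmp:figureeight}) yields $X \le C_E$. The principal obstacle I anticipate is the lifting step in (i), namely turning the bigraded linear functional $\widehat{f}$ into a genuine chain map $X \to \mathbb{F}_2[U]$ that is compatible with the involutions at the chain level. This will require a careful grading analysis exploiting both the freeness of $X$ over $\mathbb{F}_2[U]$ and the diagonal support of $C_O$.
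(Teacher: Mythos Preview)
Your overall strategy is exactly the paper's: reduce to comparing $X=C\otimes D^*$ with $C_O$, prove the single implication ``$X\not\le C_O\Rightarrow C_E\le X$'' by contraposition (build an almost $\iota_K$-local map $X\to C_O$ when a suitable functional exists), and obtain the other inequality by dualizing. The duality step (ii) is fine.

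The gap is in your construction of $f\colon X\to C_O$ in (i). Your stated hypothesis on $\varphi$ --- that it vanish on $(1+\iota_X)(\widehat X)+\partial(\widehat X)$ --- is not strong enough to guarantee a lift to a \emph{chain} map on $X$. Since $\partial_{C_O}=0$, any chain map $f\colon X\to C_O$ must kill all of $\partial_X(X)$, not just $\partial(\widehat X)$. Pass to a reduced model with basis $\{x,y_i,z_i\}$ and $\partial y_i=U^{n_i}z_i$, $n_i>0$. Your grading analysis does force $f(y_i)=0$ (because $y_i$ sits in a bidegree off the diagonal $\{(-k,-2k)\}$), but then the chain--map condition gives $U^{n_i}f(z_i)=f(\partial y_i)=\partial f(y_i)=0$, hence $f(z_i)=0$ for \emph{every} $z_i$, including those in bidegree $(0,0)$. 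Thus any lift forces $\varphi$ to vanish on the torsion subspace $Z=\operatorname{span}_{\mathbb{F}_2}\{z_i\}$, a constraint your condition omits. Concretely, if some $(1+\iota)(a)=x'+z$ with $z\in Z$ nonzero and $x'\notin(1+\iota)(\widehat X)$, your hypothesis holds yet no almost $\iota_K$-local map $X\to C_O$ exists.

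The fix is immediate once you see this: require $\varphi$ to vanish on $Z+(1+\iota)(\widehat X)$, which is precisely the subspace $Z+W$ in the paper's argument. The contrapositive then yields $x'=z+(1+\iota)(a)$ in $\widehat X$ with $z\in Z$; since $z$ lifts to a genuine torsion cycle in $X$, the element $x'-z$ is still a cycle generating $H_\ast(U^{-1}X)$, and \Cref{lem:figeightmap} applies to it. The paper packages the same idea by quotienting out a subcomplex $C'$ containing both $Z$ and $W$, which automatically makes the quotient a chain map and $\iota$-equivariant on the hat level.
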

\begin{proof}
Let $C,D \in \mathfrak{I}^U_K$. First, suppose that $[C]=[D]+[C_E]$, then we have [$C\otimes D^{\ast}] = [C_E]$. Since $C_O$ and $C_E$ are incomparable by \Cref{exmp:figureeight}, we conclude that $C$ and $D$ are also incomparable.

For the other direction, we will show that for each $C \in \mathfrak{I}^U_K$, if there is no almost $\iota_K$-local map from $C_E$ to $C$, then there is an almost $\iota_K$-local map from $C$ to $C_O$. This is enough since if $C$ and $D$ are incomparable, then $C\otimes D^{\ast}$ and $C_O$ are incomparable, which means that there exist no almost $\iota_K$-local maps to $C_O$ from both $C\otimes D^{\ast}$ and $C^{\ast}\otimes D$. Then it follows from the assertion that there exist almost $\iota_K$-local maps from $C_E$ to them. Since $[C_E]=-[C_E]$, we deduce that $C\otimes D^{\ast}$ is almost $\iota_K$-locally equivalent to $C_E$, and thus $[C]=[D]+[C_E]$. 
	
Possibly after replacing $C$ by a homotopy equivalent complex, we may assume that there exists an $\mathbb{F}_2[U]$-linear basis $B = \{x, y_i, z_i\}$ for $C$, so that for each $i$, we have $\partial y_i = U^{n_i} z_i$ for some $n_i>0$. For simplicity, we will write $1+\iota$ as $\omega$. Consider the $\mathbb{F}_2$-vector spaces 
$$Z = \spa_{\mathbb{F}_2} \{ z_i \} \qquad\text{and}\qquad W = \spa_{\mathbb{F}_2} \{\omega (g) \mid g \in B\}$$
which are linear subspaces of $\widehat{C}$. We claim that $x$ is not contained in $Z + W$. Indeed, suppose it were. Then $x=z+w$ where $z$ is a torsion cycle and $w\in \image(\omega)$, which implies that there exists an element $b$ in the $\mathbb{F}_2$-linear span of $B$ such that $\omega(b)=x+z$. But now the homology class of $x+z$ generates $H_{\ast}(U^{-1}C)$, so it follows from \Cref{lem:figeightmap} that there exists an almost $\iota_K$-local map from $C_E$ to $C$, a contradiction. The claim follows.

	Now choose a homogeneous $\mathbb{F}_2$-basis $\{p_1, \ldots, p_r\}$ for $Z + W$. Extend the set $\{x, p_1, \ldots, p_r\}$ to a homogenous $\mathbb{F}_2$-basis
	\[
	\{x, p_1, \ldots, p_r, q_1, \ldots, q_s\}
	\]
	for all of $\spa_{\mathbb{F}_2}(B)$. Define
	\[
	C^{\prime} = \spa_{\mathbb{F}_2[U]}\{p_1, \ldots, p_r, q_1, \ldots, q_s\}.
	\]
	Since the image of $\partial$ is contained in the $\mathbb{F}_2[U]$-linear span of $Z$, it is clear that $C^{\prime}$ is a subcomplex of $C$. Then the quotient map
	\[
	C \rightarrow C/C^{\prime} \simeq \mathbb{F}_2[U]
	\]
	is a local map from $C$ to $C_O$, completing the proof.
\end{proof}

\Cref{thm:comparison} then allows us to define an additive $\mathbb{Z}/2\mathbb{Z}$-valued invariant on the torsion subgroup of $\mathcal{C}$ via involutive knot Floer homology, as follows.

\begin{cor}
\label{cor:pseudoArf}
The group homomorphism $\mathfrak{A}\colon\mathcal{C}\rightarrow \mathfrak{I}^U_K$ in \Cref{rem:homomorphism} restricts to a homomorphism $$\mathfrak{A}\colon\mathcal{C}_{T}\rightarrow \mathbb{Z}/2\mathbb{Z},$$ where $\mathcal{C}_{T}$ denotes the torsion subgroup of the knot concordance group $\mathcal{C}$ and $\mathbb{Z}/2\mathbb{Z}$ is the subgroup of $\mathfrak{I}^U_K$ generated by the chain complex $C_E$.
\end{cor}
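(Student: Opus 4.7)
The plan is to combine the homomorphism $\mathfrak{A}\colon \mathcal{C}\to \mathfrak{I}^U_K$ from \Cref{rem:homomorphism} with the structural information from \Cref{thm:comparison} to show that the torsion subgroup of $\mathfrak{I}^U_K$ is precisely $\{[C_O],[C_E]\}\cong \mathbb{Z}/2\mathbb{Z}$. The corollary then follows immediately by restricting $\mathfrak{A}$ to $\mathcal{C}_T$.

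First I would check that $\langle [C_E]\rangle \cong \mathbb{Z}/2\mathbb{Z}$. Since the figure-eight knot is negative-amphichiral, the relation $-[C_E]=[C_E]$ recorded in \Cref{exmp:figureeight} shows that $[C_E]$ has order dividing $2$; combined with the fact from the same example that $[C_E]\ne [C_O]$, the subgroup is exactly $\{[C_O],[C_E]\}$. Next I would establish the following general fact about the translation-invariant partial order on $\mathfrak{I}^U_K$: any torsion element that is comparable to $[C_O]$ must equal $[C_O]$. Indeed, suppose $[C_O]\le [C]$ and $n[C]=[C_O]$ for some $n\ge 1$. Adding $[C]$ to both sides repeatedly and using translation invariance yields
\[
[C_O]\le [C]\le 2[C]\le \cdots \le n[C]=[C_O],
\]
so antisymmetry of the partial order forces $[C]=[C_O]$. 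The case $[C]\le [C_O]$ is symmetric upon passing to $-[C]$.

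To finish, let $[C]\in \mathfrak{I}^U_K$ be any torsion element. If $[C]=[C_E]$ there is nothing to do. Otherwise \Cref{thm:comparison} implies that $[C]$ and $[C_O]$ are comparable, since incomparability would force $[C]=[C_O]+[C_E]=[C_E]$. The preceding fact then gives $[C]=[C_O]$. Hence the torsion subgroup of $\mathfrak{I}^U_K$ is contained in $\{[C_O],[C_E]\}$, and because $\mathfrak{A}$ is a group homomorphism it sends $\mathcal{C}_T$ into this subgroup, yielding the desired $\mathfrak{A}\colon \mathcal{C}_T \to \mathbb{Z}/2\mathbb{Z}$. The substantive work has already been carried out in \Cref{thm:comparison}, so I do not expect any real obstacle; the argument here is a short formal deduction in a partially ordered abelian group.
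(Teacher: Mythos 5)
Your proposal is correct and follows essentially the same route as the paper: use \Cref{thm:comparison} to conclude that any class not equal to $[C_E]$ must be comparable to $[C_O]$, then run the monotone chain $[C_O]\le [C]\le 2[C]\le\cdots\le n[C]=[C_O]$ using translation invariance to conclude $[C]=[C_O]$. The paper compresses this by assuming WLOG $\mathfrak{A}(K)\ge [C_O]$, while you make the symmetry (passing to $-[C]$) and the fact that $\langle[C_E]\rangle\cong\mathbb{Z}/2\mathbb{Z}$ explicit; these are minor elaborations of the same argument.
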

\begin{proof}
We need to prove that if $K$ is torsion in $\mathcal{C}$, then  either $\mathfrak{A}(K)=[C_O]$ or $\mathfrak{A}(K)=[C_E]$ in $\mathfrak{I}^U_K$. Suppose not; then it follows from \Cref{thm:comparison} that $\mathfrak{A}(K)$ is comparable to $C_O$. Assume without loss of generality that $\mathfrak{A}(K) \geq C_O$. Then we get
\[
[C_O] \le \mathfrak{A}(K) \le 2\mathfrak{A}(K) \le \cdots \le n\mathfrak{A}(K)=\mathfrak{A}(nK) = [C_O],
\]
which implies $\mathfrak{A}(K)=[C_O]$, a contradiction.
\end{proof}

Furthermore, the existence of a total ordering on $\mathfrak{I}^U_K$ modulo $C_E$ gives a new method to determine whether the given knot has infinite order in the concordance group.

\begin{cor}
\label{cor:infiniteorder}
If $\mathfrak{A}(K)$ is comparable to both $C_O$ and $C_E$, then $K$ has infinite order in $\mathcal{C}$.
\end{cor}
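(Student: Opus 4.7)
The plan is to argue by contradiction using \Cref{cor:pseudoArf} and the incomparability of $C_O$ and $C_E$ established in \Cref{exmp:figureeight}. Suppose, toward a contradiction, that $K$ has finite order in $\mathcal{C}$, so that $K \in \mathcal{C}_T$. Since \Cref{cor:pseudoArf} shows that the homomorphism $\mathfrak{A}$ takes any torsion class to an element of the subgroup $\{[C_O],[C_E]\} \subset \mathfrak{I}^U_K$, we must have $\mathfrak{A}(K) = [C_O]$ or $\mathfrak{A}(K) = [C_E]$.

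In the first case, the hypothesis that $\mathfrak{A}(K)$ is comparable to $C_E$ translates directly into the statement that $C_O$ and $C_E$ are comparable in $\mathfrak{I}^U_K$, which contradicts \Cref{exmp:figureeight}. In the second case, the hypothesis that $\mathfrak{A}(K)$ is comparable to $C_O$ likewise forces $C_E$ to be comparable to $C_O$, contradicting the same example. Either way we reach a contradiction, so $K$ must have infinite order in $\mathcal{C}$.

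There is no genuine obstacle in this argument beyond correctly invoking \Cref{cor:pseudoArf}: the only subtlety is that ``comparability'' is a property of almost $\iota_K$-local equivalence classes (not of specific model complexes), so once $\mathfrak{A}(K)$ is identified with $[C_O]$ or $[C_E]$ in $\mathfrak{I}^U_K$, the hypothesis of comparability transfers verbatim to the pair $(C_O,C_E)$. Thus the corollary reduces to a one-line consequence of the two preceding results.
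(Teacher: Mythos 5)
Your proof is correct and is essentially the same argument as the paper's: assume $K$ is torsion, invoke \Cref{cor:pseudoArf} to force $\mathfrak{A}(K)\in\{[C_O],[C_E]\}$, and then derive a contradiction from the incomparability of $C_O$ and $C_E$ established in \Cref{exmp:figureeight}. The only difference is that you spell out the two cases separately, which the paper compresses into one sentence.
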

\begin{proof}
If $K$ is torsion in $\mathcal{C}$, then it follows from \Cref{cor:pseudoArf} that $\mathfrak{A}(K)=[C_O]$ or $\mathfrak{A}(K)=[C_E]$ in $\mathfrak{I}^U_K$. This is a contradiction, since we know from \Cref{exmp:figureeight} that $C_O$ and $C_E$ are not comparable to each other.
\end{proof}

\begin{exmp}
Note that $\mathfrak{A}(O)=0$ and $\mathfrak{A}(E)=1$ by definition. In general, it follows directly from \cite[Proposition 8.3]{hendricks2017involutive} that for any quasi-alternating torsion knot $K$, we have that $\mathfrak{A}(K)=\Arf(K)$.
\end{exmp}

Finally, we will see that the invariant $\mathfrak{A}$ is insensitive under orientation reversal for torsion knots, although its definition involves involutive knot Floer homology and thus takes the orientation on the given oriented knot into account in an essential way. This fact will be used in \Cref{sec:iteratedcable}.

\begin{prop}
\label{prop:reversal}
For any torsion knot $K$, we have $\mathfrak{A}(K^r)=\mathfrak{A}(K)$.
\end{prop}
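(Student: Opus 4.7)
The plan is to use the classification of $\mathfrak{A}$-values on torsion knots provided by \Cref{cor:pseudoArf}, which asserts $\mathfrak{A}(L)\in\{[C_O],[C_E]\}$ for any torsion knot $L$. Combined with the fact from \Cref{exmp:figureeight} that $[C_O]$ and $[C_E]$ are incomparable in the partial order on $\mathfrak{I}^U_K$, the proposition reduces to showing that $\mathfrak{A}(K)$ and $\mathfrak{A}(K^r)$ are comparable; indeed, if they are comparable then they cannot form the pair $\{[C_O],[C_E]\}$, forcing equality in $\mathbb{Z}/2\mathbb{Z}$. No further analysis of the minus-flavored complex of $K^r$ beyond its relationship to that of $K$ will be needed.

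The natural candidate for a comparability map comes from the involution $\iota_K$ itself. Recall that the two-variable chain homotopy skew-equivalence $\iota_K$ on $CFK_{UV}(K)$ is defined as $\phi_\ast\circ\mathrm{conj}$ and exchanges the two formal variables in the sense that $\iota_K\cdot U\sim V\cdot\iota_K$. In particular, $\iota_K$ descends to a chain homotopy equivalence
\[
\iota_K\colon CFK^-(K)=CFK_{UV}(K)/V\;\longrightarrow\;CFK_{UV}(K)/U,
\]
and the right-hand side is naturally identified with $CFK^-(K^r)$ via the basepoint-swap isomorphism $CFK_{UV}(K^r)\cong CFK_{UV}(K)$ in which the roles of $U$ and $V$ are exchanged. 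The resulting $\mathbb{F}_2[U]$-linear chain homotopy equivalence $CFK^-(K)\to CFK^-(K^r)$ is evidently local, and I would show that it intertwines the hat-flavored involutions $\iota_K$ and $\iota_{K^r}$ up to chain homotopy, thereby qualifying as an almost $\iota_K$-local map.

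The main obstacle is to verify the commutation $\iota_{K^r}\widehat f\sim\widehat f\iota_K$ on the hat-flavored truncation, where $\widehat f$ is the truncation of the map constructed above. Because $\iota_K$ and $\iota_{K^r}$ are a priori defined using separate choices of conjugations and tubular-neighborhood diffeomorphisms, their compatibility on the common vector space $\widehat{CFK}(K)=\widehat{CFK}(K^r)$ must be established using the first-order naturality of Heegaard Floer theory together with the defining identity $\iota_K^2\sim 1+\Phi\Psi$ and the skew-grading condition on $\iota$. Once this commutation is verified and the comparability of $\mathfrak{A}(K)$ and $\mathfrak{A}(K^r)$ is in hand, the torsion dichotomy from \Cref{cor:pseudoArf} immediately yields $\mathfrak{A}(K^r)=\mathfrak{A}(K)$ in $\mathbb{Z}/2\mathbb{Z}$.
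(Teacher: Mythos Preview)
Your reduction to comparability is correct, but the candidate map you propose does not do the job. Under the basepoint-swap identification you invoke, the relation between the two involutions is known (this is the observation from \cite{binns2021nonorientable} that the paper uses): on the common vector space $\widehat{CFK}(K)=\widehat{CFK}(K^r)$ one has $\iota_{K^r}\sim\iota_K^{-1}$. With your choice $\widehat f=\iota_K$, the required intertwining $\iota_{K^r}\widehat f\sim\widehat f\,\iota_K$ becomes $\iota_K^{-1}\iota_K\sim\iota_K^2$, i.e.\ $\iota_K^2\sim\id$ on $\widehat{CFK}$. But the defining identity you yourself cite says $\iota_K^2\sim 1+\Phi\Psi$, and $\Phi\Psi$ is \emph{not} nullhomotopic on $\widehat{CFK}$ in general: already for the figure-eight knot (with reduced differential $\partial h=Us+Vt$, $\partial s=Vz$, $\partial t=Uz$) one computes $\Phi\Psi(h)=z\neq 0$ on a complex with zero differential, so no homotopy can kill it. Thus the map $\iota_K$ is not an almost $\iota_K$-local map from $CFK^-(K)$ to $CFK^-(K^r)$, and the same obstruction blocks the reverse direction.

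The paper sidesteps this by not attempting to compare the two complexes directly. It reduces to the implication $\mathfrak{A}(K)=0\Rightarrow\mathfrak{A}(K^r)=0$ and exploits the hypothesis: an almost $\iota_K$-local map $f\colon C_O\to CFK^-(S^3,K)$ satisfies $\iota_K\widehat f\sim\widehat f$ (since $C_O$ has trivial involution), hence also $\iota_{K^r}\widehat f\sim\iota_K^{-1}\iota_K\widehat f\sim\widehat f$. So the \emph{same} $f$ witnesses $[C_O]\le\mathfrak{A}(K^r)$, and the torsion dichotomy finishes the argument. The point is that one only needs $\iota_K$-invariance of the image of $\widehat f$, which is an input from $\mathfrak{A}(K)=0$, rather than the global identity $\iota_K^2\sim\id$ that your approach would require.
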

\begin{proof}
It suffices to show that $\mathfrak{A}(K)=0$ implies $\mathfrak{A}(K^r)=0$. Suppose that $\mathfrak{A}(K)=0$. As observed in \cite{binns2021nonorientable}, the involution $\iota_{K^r}$ on $CFK_{UV}(S^3,K^r) = CFK_{UV}(S^3,K)$ is the homotopy inverse of $\iota_K$. By assumption, there exists an $\iota_K$-local map
\[
f\colon C_O\rightarrow CFK^-(S^3,K),
\] 
i.e.\ $f$ is a local map whose hat-flavored truncation $\widehat{f}$ satisfies $\widehat{f} \sim \iota_K \widehat{f}$. Composing both sides with $\iota_{K^r}$ then gives 
\[
\widehat{f} \sim \iota_{K^r}\iota_K \widehat{f} \sim \iota_{K^r} \widehat{f},
\]
and thus $f$ is actually an $\iota_K$-local map from $C_O$ to $CFK^-(S^3,K^r)$. Hence $\mathfrak{A}(K^r)$ is comparable to $C_O$. Since $K$ is torsion in $\mathcal{C}$, we see that $K^r$ should also be torsion. By \Cref{cor:pseudoArf} and \Cref{cor:infiniteorder}, we should have $\mathfrak{A}(K^r)=0$, as desired.
\end{proof}

\section{Horizontal almost $\iota_K$-complexes from bordered Floer homology}
\label{sec:horfrombordered}
Bordered Heegaard Floer homology, defined by Lipshitz, Ozsvath, and Thurston in \cite{lipshitz2018bordered}, associates to a bordered 3-manifold $M$ (with connected boundary $\Sigma$) a type-$D$ module $\widehat{CFD}(M)$ and a type-$A$ module $\widehat{CFA}(M)$ over the $\mathbb{F}_2$-algebra $\mathcal{A}(\Sigma)$, which are defined via counting holomorphic disks on any choice of bordered Heegaard diagram of $M$ with respect to certain boundary conditions. When we have to specify the diagram we are working on, we will denote them as $\widehat{CFD}(H)$ and $\widehat{CFA}(H)$ where $H$ is our choice of a bordered Heegaard diagram representing $M$. The categories of type-$D$ (or type-$A$) modules also have a notion for morphisms between them, which are the $\mathcal{A}(\Sigma)$-linear maps which commute with the differential in the type-$D$ case and the maps which are compatible with the $A_\infty$ relations in the type-$A$ case. Note that we are not giving precise definitions here; in general, we expect the readers to be familiar with bordered Heegaard Floer theoretic machinaries.

When $M$ has two boundary components, say $\Sigma_1$ and $\Sigma_2$, we instead have a type-$DA$ module $\widehat{CFDA}(M)$, which is a $\mathcal{A}(\Sigma_1)$-$\mathcal{A}(\Sigma_2)$-bimodule. This bimodule is also defined using bordered Heegaard diagrams representing $M$, and we will denote its type-$DA$ module as $\widehat{CFDA}(H)$ when we have to specify a Heegaard diagram $H$ that we are working with. As in the case of type-$D$ and type-$A$ modules, type-$DA$ modules also have a notion of morphisms between them, which are called type-$DA$ morphisms.

A similar construction also works when we have a knot inside a bordered 3-manifold, in which case we use doubly-pointed bordered Heegaard diagram such that one basepoint lies on the boundary. Such diagrams give minus-flavored type-$D$ and type-$A$ modules over $\mathbb{F}_2[U]$, denoted $CFD^-$ and $CFA^-$, in which algebraic intersection numbers of pseudoholomorphic disks with the interior basepoint is recorded by a formal variable $U$. The truncations of those modules by $U=0$ are denoted again by $\widehat{CFD}$ and $\widehat{CFA}$. Note that the truncation by $U=1$ is equivalent to forgetting the knot.

This framework is compatible with gluing constructions in the following way. If $M$ and $N$ are bordered 3-manifolds with connected boundary and $\partial M = \partial N$, then we have
\[
\widehat{CF}(M \cup N) \simeq \widehat{CFA}(M) \boxtimes \widehat{CFD}(N).
\]
Similar formulae also holds for gluing bordered manifolds with one or two boundary components, and also when we have a knot inside either $M$ or $N$ (and then the formula involves $CFA^-$ or $CFD^-$). The general rule is that a type-$D$ boundary should be glued to a type-$A$ boundary.

When bordered manifolds have boundary components diffeomorphic to torus, the associated $\mathbb{F}_2$-algebra $\mathcal{A}(T^2)$ admits a simple description, as follows. As an $\mathbb{F}_2$-vector space, we have
\[
\mathcal{A}(T^2) = \mathrm{Span}_{\mathbb{F}_2}(\iota_0,\iota_1,\rho_1,\rho_2,\rho_3,\rho_{12},\rho_{23},\rho_{123}),
\]
where $\iota_0$ and $\iota_1$ are idempotents which sum up to 1. We will mainly focus on the case of knot complements, i.e.\ $S^3 \smallsetminus K$, in which case its $\widehat{CFD}$ can be computed directly from the $\mathcal{R}$-coefficient knot Floer chain complex of $K$. We will also make use of bordered manifolds with two torus boundaries, which arise as complements of pattern knots in the $\infty$-framed solid torus, denoted $T_\infty$.

We now briefly recall the involutive theory for bordered Heegaard Floer theory, as defined in \cite{hendricks2019involutive}. Let $M$ be a bordered 3-manifold with a torus boundary. Choose a bordered Heegaard diagram $H=(\Sigma,\boldsymbol\alpha,\boldsymbol\beta,z)$ representing $M$ and its conjugate diagram $\overline{H}=(-\Sigma,\boldsymbol\beta,\boldsymbol\alpha,z)$. Since the holomorphic disk counts for $H$ and $\overline{H}$ are identical, the identity maps give a canonical isomorphism
\[
    \conjugate: \widehat{CFD}(\overline{H}) \rightarrow \widehat{CFD}(H).
\] 
Unlike involutive Heegaard Floer homology of closed 3-manifolds, the conjugate diagram $\overline{H}$ does not represent $M$, but rather represents $\AZ \cup M$, where $\AZ$ denotes the Auroux-Zarev dualizing piece, which is represented by the bordered diagram in \Cref{fig:AZpiece}. Choosing a sequence of Heegaard moves from $\AZ \cup H$ to $\overline{H}$ gives a homotopy equivalence between $\widehat{CFDA}(\AZ)\boxtimes \widehat{CFD}(H)$ and $\widehat{CFD}(\overline{H})$, and composing it with $\conjugate$ gives a homotopy equivalence
\[
\iota_M\colon\widehat{CFDA}(\AZ)\boxtimes \widehat{CFD}(M)\rightarrow \widehat{CFD}(M).
\]

Note that, due to the lack of naturality in bordered Heegaard Floer homology, the homotopy equivalence $\iota_M$ might not be defined uniquely up to homotopy, although this ambiguity has almost no effect on our arguments involving $\iota_M$. We denote the set of all possible maps $\iota_M$ that can possibly arise from its definition by $\Inv_D(M)$. The elements of $\Inv_D(M)$ are called \emph{bordered involutions} of $M$.

\begin{figure}[hbt]
    \centering
    \includegraphics[width=0.3\textwidth]{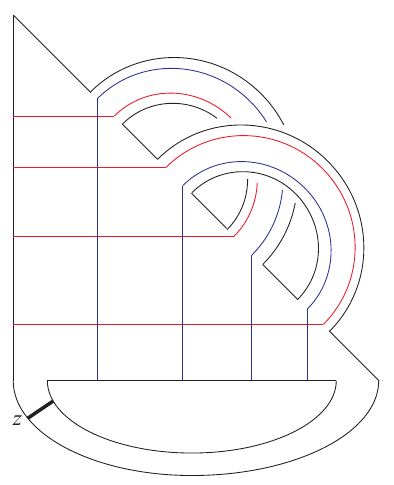}
\caption{A $\alpha$-$\beta$-bordered Heegaard diagram representing the Auroux-Zarev piece $\AZ$.}
\label{fig:AZpiece}
\end{figure} 

One can also define bordered involutions for bordered 3-manifolds with two torus boundaries. Given such a bordered 3-manifold $N$ and an $\alpha$-$\beta$- bordered Heegaard diagram $H$ representing it, we can similarly define the conjugate diagram $\overline{H}$ of $H$. In this case, it is shown in \cite[Theorem 4.6]{lipshitz2011heegaard} that $\overline{H}$ represents $\AZ\cup N \cup \overline{\AZ}$, where $\overline{\AZ}$ denotes the mirror of $\AZ$. Proceeding as in the one torus boundary case, one can define the set $\Inv(N)$ of \emph{bordered involutions} $\iota_N$ of $N$, which are homotopy equivalences of the form
\[
\iota_N \colon \widehat{CFDA}(\AZ)\boxtimes \widehat{CFDA}(N) \boxtimes \widehat{CFDA}(\overline{\AZ}) \rightarrow \widehat{CFDA}(N).
\] 

Recall that, given a bordered 3-manifold $N$ and $M$, where $N$ has two torus boundaries and $M$ has one torus boundary, we have a pairing formula
\[
\widehat{CFD}(N \cup M) \simeq \widehat{CFDA}(N)\boxtimes \widehat{CFD}(M).
\]
Given any choices of bordered involutions $\iota_N \in \Inv(N)$ and $\iota_M \in \Inv_D(M)$, we can describe a bordered involution of the glued manifold $N \cup M$ as follows. Consider the homotopy equivalence
\[
\begin{split}
\iota_{\iota_N,\iota_M}\colon &\widehat{CFDA}(\AZ)\boxtimes \widehat{CFD}(N\cup M) \\
&\simeq \widehat{CFDA}(\AZ)\boxtimes \widehat{CFDA}(N) \boxtimes \widehat{CFD}(M) \\
&\simeq \widehat{CFDA}(\AZ)\boxtimes \widehat{CFDA}(N)\boxtimes \widehat{CFDA}(\overline{\AZ}) \boxtimes\widehat{CFDA}(\AZ)\boxtimes \widehat{CFD}(M) \\
&\xrightarrow{\iota_N \boxtimes \iota_M} \widehat{CFDA}(N)\boxtimes \widehat{CFD}(M) \simeq \widehat{CFD}(N\cup M).
\end{split}
\]
Note that, in the construction of $\iota_{\iota_N,\iota_M}$, we used a homotopy equivalence 
\[
\Omega \colon  \widehat{CFDA}(\mathbb{I}) \xrightarrow{\simeq} \widehat{CFDA}(\overline{\AZ}) \boxtimes\widehat{CFDA}(\AZ) ,
\]
where $\mathbb{I}$ denotes the trivial cylinder $T^2 \times [0,1]$, induced from the observation that $\overline{\AZ}\cup \AZ$ is diffeomorphic to $\mathbb{I}$. It is shown in \cite[Lemma 4.4]{hendricks2019involutive} that $\widehat{CFDA}(\mathbb{I})$ is homotopy-rigid, so that $\Omega$ is uniquely defined up to homotopy, and hence the homotopy class of $\iota_{\iota_N,\iota_M}$ depends only on the homotopy classes of $\iota_N$ and $\iota_M$. Following the proof of \cite[Theorem 5.1]{hendricks2019involutive}, we can immediately see that $\iota_{\iota_N,\iota_M}$ is a bordered involution for $N \cup M$, so that we have a well-defined map
\[
\begin{split}
    \Inv(N) \times \Inv_D(M) &\rightarrow \Inv_D(N\cup M), \\
    (\iota_N , \iota_M) &\mapsto \iota_{\iota_N,\iota_M}.
\end{split}
\]

Now, given a torsion knot $K$, its horizontal almost $\iota_K$-complex $\mathfrak{A}(K)$ can be algebraically recovered from the involutive bordered Floer homology of $S^3 \smallsetminus K$ as follows. Consider the $\infty$-framed solid torus $T_\infty$ and the longitudinal knot $\nu \subset T_\infty$. Then we have a pairing formula:
\[
CFK^-(S^3,K) \simeq CFA^-(T_\infty,\nu) \boxtimes \widehat{CFD}(S^3 \smallsetminus K),
\]
where $S^3 \smallsetminus K$ is 0-framed. Its hat-flavored truncation is given by 
\[
\widehat{CFK}(S^3,K) \simeq \widehat{CFA}(T_\infty,\nu) \boxtimes \widehat{CFD}(S^3,K).
\]
Choose any bordered involution $\iota_{S^3 \smallsetminus K} \in \Inv_D(S^3 \smallsetminus K)$. Then, as shown (in a type-$D$ version) in \cite[Theorem 1.3]{kang2022involutive}, there exists a type-$A$ morphism 
\[
\mathfrak{f}^A_{dual}\colon\widehat{CFA}(T_\infty,\nu) \rightarrow \widehat{CFA}(T_\infty,\nu)\boxtimes\widehat{CFDA}(\AZ)
\]
such that the map
\[
\begin{split}
    \widehat{CFA}(T_\infty,\nu) \boxtimes \widehat{CFD}(S^3,K) &\xrightarrow{\mathfrak{f}^A_{dual} \boxtimes \id} \widehat{CFA}(T_\infty,\nu) \boxtimes \widehat{CFDA}(\AZ) \boxtimes \widehat{CFD}(S^3,K) \\
    &\xrightarrow{\id\boxtimes \iota_{S^3 \smallsetminus K}} \widehat{CFA}(T_\infty,\nu) \boxtimes \widehat{CFD}(S^3,K)
\end{split}
\]
is homotopic to either $\iota_K$ and $\iota^{-1}_K$. Since it follows from the proof of \cite[Theorem 1.3]{kang2022involutive} that the above map is either always homotopic to $\iota_K$ for all $K$ or always homotopic to $\iota^{-1}_K$ for all $K$, we will assume that it is actually always homotopic to $\iota_K$ for all knots $K$. Note that, if it were homotopic to $\iota^{-1}_K$ instead, all proofs work in the same way with a slight modification.

Now we will prove two lemmas, \Cref{lem:conjugatemorphism} and \Cref{lem:almostbordered}, which describe how to relate type-$D$ morphisms between bordered Floer homology of 0-framed knot complements to $\iota_K$-local maps between horizontal almost $\iota_K$-complexes. These lemmas are crucial in the proofs of the main theorems.

\begin{lem}
\label{lem:conjugatemorphism}
Given two knots $K_1,K_2$ and a type-$D$ morphism $f\colon\widehat{CFD}(S^3 \smallsetminus K_1) \rightarrow \widehat{CFD}(S^3 \smallsetminus K_2)$, denote by $\mathhat(f)$ the induced map
\[
\begin{split}
    \widehat{CFK}(S^3,K_1) &\simeq \widehat{CFA}(T_\infty,\nu)\boxtimes \widehat{CFD}(S^3 \smallsetminus K_1) \\
    &\xrightarrow{\id\boxtimes f} \widehat{CFA}(T_\infty,\nu) \boxtimes \widehat{CFD}(S^3 \smallsetminus K_2) \simeq \widehat{CFK}(S^3,K_2).
\end{split}
\]
Then, for any choices of $\iota_{S^3 \smallsetminus K_i} \in \Inv_D(S^3 \smallsetminus K_i)$ for $i=1,2$, we have that 
\[
\mathhat\left(\iota_{S^3 \smallsetminus K_2} \circ (\id\boxtimes f)\circ \iota^{-1}_{S^3 \smallsetminus K}\right) \sim \iota_{K_2} \circ \mathhat(f) \circ \iota^{-1}_{K_1},
\]
where $\iota_{K_i}$ denotes the involutive actions on $\widehat{CFK}(S^3,K_i)$ for $i=1,2$.
\end{lem}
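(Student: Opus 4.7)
The plan is to unfold both sides of the asserted homotopy using the formula from \cite[Theorem 1.3]{kang2022involutive}, which describes $\iota_K$ in terms of the bordered involution $\iota_{S^3\smallsetminus K}$ and the fixed type-$A$ morphism $\mathfrak{f}^A_{dual}$. Concretely, for $i=1,2$, we have a homotopy
\[
\iota_{K_i} \;\sim\; \bigl(\id\boxtimes \iota_{S^3\smallsetminus K_i}\bigr) \circ \bigl(\mathfrak{f}^A_{dual}\boxtimes \id_{D_i}\bigr),
\]
where $D_i := \widehat{CFD}(S^3\smallsetminus K_i)$. Since $\iota_{K_i}$ is a homotopy equivalence, this expression has a well-defined homotopy inverse.

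The core step is a commutation of the form $(\mathfrak{f}^A_{dual}\boxtimes \id_{D_2})\circ(\id\boxtimes f)\sim (\id\boxtimes \id_{\widehat{CFDA}(\AZ)}\boxtimes f)\circ (\mathfrak{f}^A_{dual}\boxtimes \id_{D_1})$, which is an instance of the functoriality of the box tensor product: both sides equal $\mathfrak{f}^A_{dual}\boxtimes f$ up to homotopy. Using this commutation, I compute
\[
\begin{split}
\iota_{K_2}\circ \mathhat(f) &\sim (\id\boxtimes \iota_{S^3\smallsetminus K_2})(\mathfrak{f}^A_{dual}\boxtimes \id_{D_2})(\id\boxtimes f) \\
&\sim (\id\boxtimes \iota_{S^3\smallsetminus K_2})(\id\boxtimes \id\boxtimes f)(\mathfrak{f}^A_{dual}\boxtimes \id_{D_1}) \\
&\sim \bigl(\id\boxtimes (\iota_{S^3\smallsetminus K_2}\circ (\id\boxtimes f))\bigr)\circ (\mathfrak{f}^A_{dual}\boxtimes \id_{D_1}).
\end{split}
\]

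Writing $G := \iota_{S^3\smallsetminus K_2}\circ(\id\boxtimes f)\circ \iota^{-1}_{S^3\smallsetminus K_1}$, the inner parenthesized composition becomes $G\circ \iota_{S^3\smallsetminus K_1}$. Inserting this gives
\[
\iota_{K_2}\circ \mathhat(f)\;\sim\; (\id\boxtimes G)\circ (\id\boxtimes \iota_{S^3\smallsetminus K_1})\circ(\mathfrak{f}^A_{dual}\boxtimes \id_{D_1}) \;\sim\; \mathhat(G)\circ \iota_{K_1},
\]
and composing with $\iota^{-1}_{K_1}$ on the right yields the claimed identity.

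The main obstacle is verifying the functoriality statement $(\mathfrak{f}^A_{dual}\boxtimes \id_{D_2})\circ(\id\boxtimes f)\sim (\id\boxtimes \id_{\widehat{CFDA}(\AZ)}\boxtimes f)\circ (\mathfrak{f}^A_{dual}\boxtimes \id_{D_1})$ at the level of type-$DA$ morphisms: one has to check that the $A_\infty$-signs and the tree-combinatorics defining $\boxtimes$ on morphisms produce strictly (or at least up to homotopy) equal maps when a type-$A$ morphism acts on one tensor factor and a type-$D$ morphism acts on a disjoint tensor factor. This is standard in the bordered Floer literature, but the argument needs to be invoked (rather than reproved) and applied with care because the intermediate factor $\widehat{CFDA}(\AZ)$ sits between $\widehat{CFA}(T_\infty,\nu)$ and $\widehat{CFD}(S^3\smallsetminus K_i)$. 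Once this is in hand, the remainder of the proof is the straightforward rearrangement sketched above, together with an observation that the homotopy inverse $\iota^{-1}_{K_1}$ is well-defined up to homotopy and cancels as expected.
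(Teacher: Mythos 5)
Your proof is correct and takes essentially the same route as the paper: substitute the pairing-theoretic formula for $\iota_{K_i}$ from the type-$D$ version of \cite[Theorem 1.3]{kang2022involutive}, commute $\mathfrak{f}^A_{dual}\boxtimes\id$ past $\id\boxtimes f$ via functoriality of the box tensor on disjoint tensor factors, and cancel $\iota_{K_1}$ on the right. The paper runs the same manipulation in the opposite direction (starting from $\mathhat(G)\circ\iota_{K_1}$ and collapsing to $\iota_{K_2}\circ\mathhat(f)$), and uses the box-tensor functoriality implicitly where you flag it explicitly, but this is the identical argument.
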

\begin{proof}
Observe that 
\[
\begin{split}
    \left(\id \boxtimes \left(\iota_{S^3 \smallsetminus K_2} \circ (\id\boxtimes f)\circ \iota^{-1}_{S^3 \smallsetminus K_1}\right)\right) \circ \left(\id\boxtimes \iota_{S^3 \smallsetminus K_1}\right) \circ \left(\mathfrak{f}^A_{dual} \boxtimes \id\right) &\sim  
    \left(\id \boxtimes \left(\iota_{S^3 \smallsetminus K_2} \circ (\id\boxtimes f)\right)\right) \circ \left(\mathfrak{f}^A_{dual} \boxtimes \id\right) \\
    &\sim \left(\id \boxtimes \iota_{S^3 \smallsetminus K_2}\right) \circ \left(\mathfrak{f}^A_{dual} \boxtimes \id\right) \circ (\id \boxtimes f).
\end{split}
\]
Therefore we deduce that
\[
\mathhat\left(\iota_{S^3 \smallsetminus K_2} \circ (\id\boxtimes f)\circ \iota^{-1}_{S^3 \smallsetminus K_1}\right) \circ \iota_{K_1}
\sim \iota_{K_2} \circ \mathhat(f),
\]
which implies $$\mathhat\left(\iota_{S^3 \smallsetminus K_2} \circ (\id\boxtimes f)\circ \iota^{-1}_{S^3 \smallsetminus K_1}\right) \sim \iota_{K_2} \circ \mathhat(f) \circ \iota^{-1}_{K_1},$$ as desired.
\end{proof}

\begin{defn}
Given two knots $K_1,K_2$, we say that a type-$D$ morphism $f\colon\widehat{CFD}(S^3 \smallsetminus K_1)\rightarrow \widehat{CFD}(S^3 \smallsetminus K_2)$ is \emph{local} if the induced map
\[
\begin{split}
    \widehat{CF}(S^3) &\simeq \widehat{CFA}(T_\infty)\boxtimes \widehat{CFD}(S^3 \smallsetminus K_1) \\
    &\xrightarrow{\id\boxtimes f} \widehat{CFA}(T_\infty) \boxtimes \widehat{CFD}(S^3 \smallsetminus K_2) \simeq \widehat{CF}(S^3)
\end{split}
\]
is a homotopy equivalence. Furthermore, a local morphism $f\colon\widehat{CFD}(S^3 \smallsetminus K_1) \rightarrow \widehat{CFD}(S^3 \smallsetminus K_2)$ is said to be \emph{almost bordered-$\iota$-local} if $f$ is local and $$\mathhat\left(f+\iota_{S^3 \smallsetminus K_2} \circ (\id\boxtimes f)\circ \iota^{-1}_{S^3 \smallsetminus K_1}\right) \sim 0$$ for some choices of $\iota_{S^3 \smallsetminus K_i} \in \Inv_D(S^3 \smallsetminus K_i)$ for $i=1,2$.
\end{defn}

In fact, the definition of bordered-$\iota$-locality does not depend on the choices of $\iota_{S^3 \smallsetminus K_i}$ by the following lemma.

\begin{lem}
\label{lem:almostbordered}
Given two knots $K_1,K_2$ and a local morphism $f\colon\widehat{CFD}(S^3 \smallsetminus K_1) \rightarrow \widehat{CFD}(S^3 \smallsetminus K_2)$, denote by $\mathminus(f)$ the induced map 
\[
\begin{split}
    CFK^- (S^3,K_1) &\simeq CFA^-(T_\infty,\nu)\boxtimes \widehat{CFD}(S^3 \smallsetminus K_1) \\
    &\xrightarrow{\id\boxtimes f} CFA^-(T_\infty,\nu) \boxtimes \widehat{CFD}(S^3 \smallsetminus K_2) \simeq CFK^-(S^3,K_2).
\end{split}
\]
Then for any choices of $\iota_{S^3 \smallsetminus K_i} \in \Inv_D(S^3 \smallsetminus K_i)$ for $i=1,2$, the map $\mathminus(f)$ is an almost $\iota_K$-local map if and only if $f$ is  almost bordered-$\iota$-local.
\end{lem}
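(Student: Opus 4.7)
The plan is to split almost $\iota_K$-locality of $\mathminus(f)$ into its locality and commutation conditions and to check that each matches the corresponding condition in the definition of almost bordered-$\iota$-locality. The key structural observation is that the $\mathbb{F}_2[U]$-module structure on $CFK^-(S^3,K_i)\simeq CFA^-(T_\infty,\nu)\boxtimes\widehat{CFD}(S^3\smallsetminus K_i)$ is carried entirely by the first tensor factor: setting $U=0$ turns the first factor into $\widehat{CFA}(T_\infty,\nu)$ and yields $\widehat{CFK}(S^3,K_i)$, while setting $U=1$ forgets the basepoint (turning the first factor into $\widehat{CFA}(T_\infty)$) and yields $\widehat{CF}(S^3)$. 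In particular $\widehat{\mathminus(f)}=\mathhat(f)$, and the $U=1$ reduction of $\mathminus(f)$ is precisely the test map appearing in the definition of bordered-locality.

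Given these identifications, the commutation part is a direct application of the preceding lemma. Almost $\iota_K$-locality of $\mathminus(f)$ demands $\iota_{K_2}\circ\mathhat(f)\sim\mathhat(f)\circ\iota_{K_1}$, which by \Cref{lem:conjugatemorphism} is equivalent to
\[
\mathhat\bigl(f + \iota_{S^3\smallsetminus K_2}\circ(\id\boxtimes f)\circ\iota^{-1}_{S^3\smallsetminus K_1}\bigr)\sim 0,
\]
which is exactly the commutation required in the definition of almost bordered-$\iota$-local maps.

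The remaining task, and what I expect to be the main technical point, is to verify that $U^{-1}\mathminus(f)$ is a homotopy equivalence if and only if the $U=1$ reduction of $\mathminus(f)$ is. The homology of each $CFK^-(S^3,K_i)$ decomposes as $\mathbb{F}_2[U]\langle\alpha_i\rangle\oplus T_i$ with $T_i$ a $U$-torsion submodule; on such torsion, $U-1$ acts invertibly (since $Ux=0$ forces $(U-1)x=x$), so both specializations kill torsion and detect only the free tower, giving $H_*(U^{-1}CFK^-(S^3,K_i))\simeq\mathbb{F}_2[U,U^{-1}]$ and $H_*(CFK^-(S^3,K_i)/(U-1))\simeq\mathbb{F}_2$. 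A degree-preserving chain map $g$ between these complexes must send $[\alpha_1]$ to an element whose free part is a $U$-monomial multiple of $[\alpha_2]$, by the bigrading constraint that homogeneous elements of $\mathbb{F}_2[U]$ are monomials in $U$. Thus both specializations are simultaneously homotopy equivalences, precisely when this monomial coefficient is nonzero. Combining the three steps yields the lemma.
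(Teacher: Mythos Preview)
Your proof is correct and follows essentially the same line as the paper's. The commutation step via \Cref{lem:conjugatemorphism} is identical. For the locality step, note that the hypothesis already assumes $f$ is local, so only the one-directional implication ``$f$ local $\Rightarrow$ $\mathminus(f)$ local'' is actually needed for the biconditional; the paper handles this in a single sentence by observing that the $U=1$ truncation of $CFA^-(T_\infty,\nu)$ is $\widehat{CFA}(T_\infty)$. Your bidirectional argument via the free-tower decomposition of $HFK^-$ is correct (and in fact makes explicit a point the paper leaves implicit), but it is not ``the main technical point'' --- it is the routine part of the lemma, and the real content lies in the invocation of \Cref{lem:conjugatemorphism}.
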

\begin{proof}
Since $f$ is local, we know that $\mathminus(f)$ is also a local map, since $\widehat{CFA}(T_\infty)$ is the truncation of $CFA^-(T_\infty,\nu)$ by $U=1$. By \Cref{lem:conjugatemorphism}, we have 
\[
\begin{split}
    \mathhat\left(f+\iota_{S^3 \smallsetminus K_2} \circ (\id\boxtimes f)\circ \iota^{-1}_{S^3 \smallsetminus K_1}\right) &\sim \mathhat(f) + \iota_{K_2} \circ \mathhat(f) \circ \iota_{K_1}^{-1} \\
    &\sim \left(\mathhat(f) \circ \iota_{K_1} + \iota_{K_2} \circ \mathhat(f)\right) \circ \iota_{K_1}^{-1}.
\end{split}
\]
Since $\mathhat(f)$ is the hat-flavored truncation of $\mathminus(f)$, we deduce that $\mathminus(f)$ is an almost $\iota_K$-local map if and only if $$\iota_{K_2} \circ \mathhat(f) \sim \mathhat(f) \circ \iota_{K_1},$$ and if and only if $$\mathhat\left(f+\iota_{S^3 \smallsetminus K_2} \circ (\id\boxtimes f)\circ \iota^{-1}_{S^3 \smallsetminus K_1}\right) \sim 0,$$
which concludes the proof.
\end{proof}

\section{$(\text{odd},1)$-cables of torsion knots}
\label{sec:cable}

In this section, we will prove a part of \Cref{thm:mainthm} which states that if $K$ is torsion in $\mathcal{C}$ with $\mathfrak{A}(K) = 1$, then $K_{2n+1,1}$ has infinite order in $\mathcal{C}$, and then improve its proof a bit more to prove \Cref{thm:mainthm-indep}. We start by observing that the $\mathcal{R}$-coefficient knot Floer chain complex of a torsion knot has a $\mathcal{R}$ summand.

\begin{lem}
\label{lem:torsionsplitting}
If $K$ is torsion in $\mathcal{C}$, then $CFK_{\mathcal{R}}(S^3,K)$ admits a direct summand isomorphic to $\mathcal{R}$, which is concentrated at bidegree $(0,0)$.
\end{lem}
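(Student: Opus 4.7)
The plan is to deduce the summand from the vanishing of a concordance invariant that detects its existence. Hom's $\varepsilon$-invariant descends to a group homomorphism from $\mathcal{C}$ into a totally ordered abelian group (the $\varepsilon$-local equivalence group of knot Floer complexes), and since a torsion element in a totally ordered abelian group must be zero, we obtain $\varepsilon(K) = 0$ and, in particular, $\tau(K) = 0$ for every torsion $K \in \mathcal{C}$.

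With $\tau(K) = 0$ and $\varepsilon(K) = 0$ in hand, I would extract a homogeneous cycle $x \in CFK_{UV}(S^3, K)$ of bidegree $(0,0)$ whose image in $CFK^-(S^3, K)$ (the $V = 0$ truncation) generates a direct $\mathbb{F}_2[U]$-summand of $H_*(CFK^-(S^3, K))$, and which simultaneously generates a direct $\mathbb{F}_2[V]$-summand of the homology of the $U = 0$ truncation. This is the standard characterization of $\varepsilon(K) = 0$ combined with $\tau(K) = 0$: it asserts the existence of a cycle at Alexander grading zero that is both vertically- and horizontally-essential.

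Next, let $\bar{x} \in CFK_{\mathcal{R}}(S^3, K)$ denote the reduction of $x$ modulo $UV$. I would construct an $\mathcal{R}$-linear, bigrading-preserving, chain-level projection $\pi\colon CFK_{\mathcal{R}}(S^3, K) \to \mathcal{R}$ with $\pi(\bar{x}) = 1$. Given such a $\pi$, the $\mathcal{R}$-linear map $\mathcal{R} \to CFK_{\mathcal{R}}(S^3, K)$ sending $1 \mapsto \bar{x}$ is a chain-level section of $\pi$, yielding the decomposition $CFK_{\mathcal{R}}(S^3, K) \simeq \mathcal{R} \oplus \ker\pi$ with the $\mathcal{R}$-summand concentrated at bidegree $(0,0)$, which is exactly the claim.

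The main obstacle is constructing $\pi$, or equivalently verifying $\pi\circ\partial = 0$ after a suitable choice of basis. Concretely, I would choose a homogeneous $\mathcal{R}$-basis of $CFK_{\mathcal{R}}(S^3, K)$ containing $\bar{x}$ and define $\pi$ to send $\bar{x}$ to $1$ and every other basis element to $0$; the task is then to show, possibly after changing the basis, that no basis generator has a $\bar{x}$-coefficient appearing in its differential. This is where the vanishing of $\varepsilon(K)$ enters essentially: any nontrivial such coefficient would force either $U$-torsion or $V$-torsion of $\bar{x}$ at a position precluded by the vertical and horizontal essentialness guaranteed by $\varepsilon(K) = 0$, so the required basis adjustment can be carried out systematically.
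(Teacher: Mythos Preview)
Your approach and the paper's share the same opening move---using the total ordering of a local equivalence group to force a torsion knot to be locally trivial---but then diverge. The paper works directly with the DHST group $\mathfrak{K}$ of knot-like complexes over $\mathcal{R}$: local triviality there yields $\mathcal{R}$-linear local maps $f\colon\mathcal{R}\to CFK_{\mathcal{R}}(S^3,K)$ and $g\colon CFK_{\mathcal{R}}(S^3,K)\to\mathcal{R}$, and since $g\circ f$ is a local self-map of $\mathcal{R}$ it must be homotopic to the identity, so $f$ exhibits $\mathcal{R}$ as a direct summand in one line. By routing through Hom's $\varepsilon$-equivalence instead, you land at $\varepsilon(K)=0$ (a statement about $CFK^\infty$) and then have to manufacture the $\mathcal{R}$-splitting by hand; this is strictly more work, and it is exactly the step you leave unfinished.

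The gap is in your last paragraph. Knowing that $\bar{x}$ generates a free summand of the homology of both the $V=0$ and $U=0$ truncations does not by itself rule out a basis element $y$ with $\partial y = U^n\bar{x} + (\text{other terms})$: such a relation only says $U^n[\bar{x}]$ equals some other homology class, not that it vanishes, so your claim that this ``would force $U$-torsion of $\bar{x}$'' is not correct as stated. What you actually need is Hom's structural consequence of $\varepsilon(K)=0$, namely that there exists a basis in which the distinguished generator has no incoming or outgoing horizontal \emph{or} vertical arrow; over $\mathcal{R}$ (where diagonal arrows die) this immediately produces the summand. That result is true, but it is precisely the ``systematic basis adjustment'' you promise without performing, and it is not a triviality. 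If you want to stay on your route, either cite that characterization explicitly or, more simply, adopt the paper's shortcut: once you know the complex is locally trivial in a totally ordered group over $\mathcal{R}$, the splitting is formal from $g\circ f\sim\id$.
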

\begin{proof}
The local equivalence group $\mathfrak{K}$ of knot-like complexes, defined in~\cite{DHST:2021-1}, is endowed with a total ordering. In particular, $\mathfrak{K}$ is torsion-free. Hence we deduce that $CFK_{\mathcal{R}}(S^3,K)$ is locally trivial, which implies that there exist almost local maps 
\[
f\colon\mathcal{R}\rightarrow CFK_{\mathcal{R}}(S^3,K) \qquad\text{and}\qquad\,g:CFK_{\mathcal{R}}(S^3,K)\rightarrow \mathcal{R}.
\]
Since $g\circ f$ is a local map from $\mathcal{R}$ to itself, it should be homotopic to the identity map. Therefore $f$ and $g$ induce a splitting
\[
CFK_{\mathcal{R}}(S^3,K)\simeq \mathcal{R}\oplus A
\]
for some $\mathcal{R}$-acyclic summand $A$.
\end{proof}

Recall that, for a torsion knot $K$, the value $\mathfrak{A}(K)$ is 0 if the horizontal almost $\iota_K$-complex $CFK^-(K)$ is almost $\iota_K$-locally equivalent to the $C_O$, and $1$ if it is almost $\iota_K$-locally equivalent to $C_E$. Since the notion of almost $\iota_K$-local equivalence involves chain maps which are not $\mathcal{R}$-linear, but only $\mathbb{F}_2[U]$-linear, it is not quite compatible with the bordered techniques. However, due to \Cref{lem:torsionsplitting}, we can say a bit more than that, and as a result, we have the following lemma.

\begin{lem}
\label{lem:borderedinterpret}
If $K$ is torsion in $\mathcal{C}$ with $\mathfrak{A}(K)=1$, then there exists another knot $K^{\prime}$, concordant to $K$, such that $\widehat{CFD}(S^3 \smallsetminus K^{\prime})$ has a direct summand isomorphic to $\widehat{CFD}(S^3 \smallsetminus E)$. Furthermore, the inclusion map $i\colon \widehat{CFD}(S^3 \smallsetminus E) \rightarrow \widehat{CFD}(S^3 \smallsetminus K^{\prime})$ is almost bordered-$\iota$-local.
\end{lem}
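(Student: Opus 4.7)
The plan is to extract a $C_E$-shaped substructure from the concordance invariants of $K$, promote it to an honest direct summand after replacing $K$ by a concordant knot $K'$, and then transport the picture to the bordered setting via the standard dictionary between $CFK_{\mathcal R}(S^3,K')$ and $\widehat{CFD}(S^3\smallsetminus K')$ for the $0$-framed knot complement.

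First, I apply \Cref{lem:torsionsplitting} to split $CFK_{\mathcal R}(S^3,K) \simeq \mathcal R \oplus A$, with the $\mathcal R$-summand concentrated at bidegree $(0,0)$. The hypothesis $\mathfrak{A}(K)=1$ provides an almost $\iota_K$-local equivalence between $CFK^-(S^3,K)$ and $C_E$. Applying \Cref{lem:figeightmap} to (a reduced model of) $CFK^-(S^3,K)$, I locate generators $a',b',c',d',x'$ satisfying $\partial a'=Ub'$, $\partial c'=Ud'$, $\iota_K(a')=a'+x'$, $\iota_K(b')=c'$, $\iota_K(d')=d'$, $\iota_K(x')=x'+d'$, where $x'$ hits the generator of $H_*(U^{-1}CFK^-)$. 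I arrange that $x'$ simultaneously represents the generator of the $\mathcal R$-summand above, so the $C_E$-substructure becomes compatible with the $\mathcal R$-coefficient splitting.

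Next, I promote this substructure to an honest direct summand. Passing to a knot $K'$ concordant to $K$ (by connect-summing with an appropriate companion, or by applying a Hom-style concordance simplification that cancels acyclic $\mathcal R$-pieces), I arrange a splitting $CFK_{\mathcal R}(S^3,K')\cong C_{E,\mathcal R}\oplus A'$ of $\mathcal R$-complexes with compatible $\iota_K$-action on the hat-flavored truncation, where $C_{E,\mathcal R}$ denotes the natural $\mathcal R$-lift of $C_E$, i.e.\ the $\mathcal R$-coefficient knot Floer complex of the figure-eight knot itself. Feeding this through the standard Lipshitz--Ozsv\'ath--Thurston recipe that converts $CFK_{\mathcal R}$ of a knot into $\widehat{CFD}$ of its $0$-framed complement—this recipe respects direct summands—I obtain a direct summand of $\widehat{CFD}(S^3\smallsetminus K')$ isomorphic to $\widehat{CFD}(S^3\smallsetminus E)$, together with the inclusion map $i$.

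Finally, $i$ is local since the $x'$-generator of the $C_E$-summand hits the generator of the homology of $\widehat{CF}(S^3)$ under the pairing with $\widehat{CFA}(T_\infty)$. For almost bordered-$\iota$-locality I invoke \Cref{lem:almostbordered}, which reduces the claim to showing that $\mathminus(i)\colon CFK^-(S^3,E)\to CFK^-(S^3,K')$ is an almost $\iota_K$-local map; this is precisely what our construction of the $C_E$-substructure inside $CFK^-(S^3,K')$ was designed to guarantee. The main obstacle is the concordance modification step in the second paragraph: upgrading an abstract almost $\iota_K$-local equivalence of horizontal complexes to an honest summand inclusion at the $\mathcal R$-coefficient level, while simultaneously preserving the $\iota_K$-action on the hat-flavored truncation. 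Here both the genuine $\mathcal R$-summand furnished by \Cref{lem:torsionsplitting} and a careful Hom-type simplification performed in a way that is compatible with $\iota_K$ are essential.
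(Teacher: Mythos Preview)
Your overall architecture is right: find a copy of $CFK_{\mathcal R}(S^3,E)$ sitting as an honest $\mathcal R$-summand inside $CFK_{\mathcal R}(S^3,K')$ with the correct hat-flavored $\iota_K$-action, then invoke \cite[Theorem~11.26]{lipshitz2018bordered} and \Cref{lem:almostbordered}. The gap is exactly the step you flag as ``the main obstacle'': you never actually carry out the promotion. What \Cref{lem:figeightmap} (or the assumption $\mathfrak A(K)=1$) hands you is an $\mathbb F_2[U]$-linear map $C_E\to CFK^-(S^3,K)$; the images $a',b',c',d',x'$ need not span an $\mathcal R$-linear subcomplex, let alone a summand, and there is no mechanism called ``Hom-style concordance simplification'' that lets you change the concordance class representative so as to force a prescribed $\iota_K$-equivariant $\mathcal R$-summand to appear. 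Hom's stable-equivalence result controls the non-involutive local equivalence class, not the hat-level involution.

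The paper sidesteps this by a trick you are missing. Rather than attacking $K$ directly, pass to $K\#E$, where $\mathfrak A(K\#E)=0$. Now the almost $\iota_K$-local map goes from $C_O$, not $C_E$, and one can massage the generator of the $\mathcal R$-summand of $CFK_{\mathcal R}(S^3,K\#E)$ (provided by \Cref{lem:torsionsplitting}) into a genuine $\mathcal R$-cycle $x+c'$ which is $\iota_K$-fixed mod $(U,V)$; this uses the decomposition $\partial=\partial_U+\partial_V$ and the skew-property $\partial_U\iota_K=\iota_K\partial_V$ in an essential way. Then set $K'=K\#2E$: the summand $(x+c')\otimes CFK_{\mathcal R}(S^3,E)\subset CFK_{\mathcal R}(S^3,K')$ is tautologically isomorphic to $CFK_{\mathcal R}(S^3,E)$, and the connected-sum formula for $\iota_K$ shows that on this summand the involution reduces to $\iota_E$ (the cross-term $\Phi\iota\otimes\iota\Phi$ dies because $x+c'$ is $\iota$-fixed). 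In short, the missing idea is to trade the hard problem of realizing a $C_E$-summand for the easier problem of realizing an $\iota_K$-fixed $\mathcal R$-summand in $K\#E$, and then to tensor back with $E$.
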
 
\begin{proof}
Let $n$ be a positive integer such that $nK$ is slice. Then $2n(K\# E)= 2nK\# 2nE$ is slice, so $K\# E$ also is torsion in $\mathcal{C}$. By \Cref{lem:torsionsplitting}, we see that  $CFK_{\mathcal{R}}(S^3,K\# E)$ admits a direct summand isomorphic to $\mathcal{R}$, concentrated at bidegree $(0,0)$; denote its generator as $x$. Since $\mathfrak{A}(K\# E)=\mathfrak{A}(K)+\mathfrak{A}(E)=0$, the knot Floer complex of $K\# E$ is almost $\iota_K$-locally equivalent to $C_O$, so there exists an almost $\iota_K$-local map $f\colon C_O\rightarrow CFK^-(S^3,K\# E)$. 
Write $f(1)=x+c$ for some cycle $c$ which represents a $U$-torsion class in homology. Then we have
\[
\partial y = (\widehat{f}\circ \iota_{C_O} + \iota_{K\# E} \circ \widehat{f})(1) = x+c+\iota_{K\# E}(x+c)
\]
for some $y\in \widehat{CFK}(S^3,K\# E)$. Choose any lifts to $CFK_{\mathcal{R}}(S^3,K)$ of $c$ and $y$, and denote them again by $c$ and $y$. Then we get
\[
\iota_{K\# E}(x) = \iota_{K\# E}(x+c)+\iota_{K\# E}(c) = x+c+\iota_{K\# E}(c)+\partial y \pmod{U,V},
\]
Assume without loss of generality that the differential on $CFK_{\mathcal{R}}(S^3,K\# E)$ is reduced, i.e.\ its truncation by $U=V=0$ is zero. Then, for some $\alpha,\beta\in CFK_{\mathcal{R}}(S^3,K\# E)$, we can write
\[
\iota_{K\# E}(x)=x+c+\iota_{K\# E}(c)+U\alpha+V\beta.
\]
Also, we can decompose the differential $\partial$ of $CFK_{\mathcal{R}}(S^3,K\# E)$ as 
\[
\partial = \partial_U + \partial_V,
\]
where $\partial_U$ and $\partial_V$ consists of terms containing powers of $U$ and $V$, respectively.

Recall that $c$ is a lift of a cycle in $CFK^-(S^3,K\# E)$. Since $UV=0$ in $\mathcal{R}$, we should have $\partial_U (c)=0$. Since $\iota_{K\# E}$ is a chain skew-map, we have $\partial_U \circ \iota_{K\# E} = \iota_{K\# E} \circ \partial_V$. Thus we get
\[
\begin{split}
    \partial_V (c) &= \partial_V (x+\iota_{K\# E}(x)) + \partial_V(\iota_{K\# E}(c)) + U\partial_V(\alpha) + V\partial_V(\beta) \\
    &=\iota_{K\# E}(\partial_U(c)) + V\partial_V(\beta) = V\partial_V(\beta).
\end{split}
\]
Hence, if we define $c^{\prime}=c+V\beta$, then we have
\[
\begin{split}
    \partial_U(c^{\prime}) &= V\partial_U(\beta) = 0, \\
    \partial_V(c^{\prime}) &= \partial_V(c)+V\partial_V(\beta) = 0,
\end{split}
\]
so $c^{\prime}$ is a cycle. Since $x$ generates a free $\mathcal{R}$ summand of $CFK_{\mathcal{R}}(S^3,K\# E)$ and the homology class of $c^{\prime}$ is $\mathcal{R}$-torsion, we deduce that $x+c^{\prime}$ also generates a free summand. Furthermore, since $c^{\prime} = c \pmod{U,V}$, we have 
\[
\iota_{K\# E}(x+c^{\prime}) = \iota_{K\# E}(x+c) = x+c \pmod{U,V}.
\]

Now take $K^{\prime} := K \# 2E$. Since $x+c^{\prime}$ generates a free summand of $CFK_{\mathcal{R}}(S^3,K\# E)$, we see that $(x+c^{\prime})\otimes CFK_{\mathcal{R}}(S^3,E)$ is a direct summand of $CFK_{\mathcal{R}}(S^3,K^{\prime})$. It follows from \cite[Theorem 11.26]{lipshitz2011heegaard} that direct summands of $CFK_{\mathcal{R}}$ of a knot induce direct summands of $\widehat{CFD}$ of its 0-framed complement, so there exists a direct summand of $\widehat{CFD}(S^3 \smallsetminus K^{\prime})$ isomorphic to $\widehat{CFD}(S^3 \smallsetminus E)$ such that the inclusion map $i\colon \widehat{CFD}(S^3 \smallsetminus E) \rightarrow \widehat{CFD}(S^3 \smallsetminus K^{\prime})$ satisfies the property that $\mathrm{Minus}(i)$ is an almost $\iota_K$-local map.
\end{proof}

For $p>1$, denote the $(2n+1,-1)$-cabling pattern by $C_p$. The type-$A$ module $\widehat{CFA}(T_\infty,C_p)$ was computed in \cite[Lemma 8.3]{Oz-St-Sz:2017-1}. To summarize, it admits a model generated by an $\iota_0$-element $z$ and $\iota_1$-elements $b_1,\cdots,b_{2n+1},a_1,\cdots,a_{2n+1}$. The $A_\infty$-operations are given as follows.
\[
\xymatrix{
& a_{2n+1} \ar[ld]_{\rho_2}\ar[dd]^{U^{2n+1}} && a_{2p}\ar[ll]_{\rho_2,\rho_1}\ar[dd]^{U^{2p}} && \cdots\ar[ll]_{\rho_2,\rho_1} && a_2\ar[ll]_{\rho_2,\rho_1}\ar[dd]^{U^2} && a_1\ar[ll]_{\rho_2,\rho_1} \ar[dd]^U\\
z \ar[rd]_{\rho_3} & && && && && \\
& b_{2n+1} && b_{2p}\ar[ll]^{U\cdot (\rho_2,\rho_1)} && \cdots\ar[ll]^{U\cdot (\rho_2,\rho_1)} && b_2\ar[ll]^{U\cdot (\rho_2,\rho_1)} && b_1\ar[ll]^{U\cdot (\rho_2,\rho_1)}
}
\]
Recall from \cite[Proposition 3.1]{hom2020linear} that $CFK_{UV}(S^3,E_{2n+1,-1})$ has generators 
\[
a,b,c,d,e,f,g,b_{0,i},\cdots,g_{0,i},b_{1,i},\cdots,g_{1,i},b_{0,p},\cdots,f_{0,p},b_{1,p},\cdots,g_{1,p}.
\]
for $1\le i\le p-1$. Among them, $a$ generates a direct summand and $b,c,d,e,f,g$ generate the ``central'' summand, satisfying $\iota_{E_{2n+1,-1}}(b)=b+a \pmod{U,V}$ and $\iota_{E_{2n+1,-1}}(a)=a \pmod{U,V}$. 

It follows from \cite[Theorem 11.26]{lipshitz2018bordered} that the type-$D$ module $\widehat{CFD}(S^3 \smallsetminus E)$ is generated by 9 elements 
\[
e_0,f_0,g_0,h_0,e_1,f_1,g_1,h_1,w,
\]
where the differential is given as follows.
\[
\xymatrix{
f_0 \ar[d]_{\rho_1} & e_1 \ar[l]_{\rho_2} & e_0 \ar[l]_{\rho_3} \ar[d]^{\rho_1} & & \\
f_1 & & h_1 & \oplus & w \ar@(ur,dr)^{\rho_{12}} \\
g_0 \ar[u]^{\rho_{123}} & g_1\ar[l]^{\rho_2} & h_0\ar[l]^{\rho_3} \ar[u]_{\rho_{123}} & &
}
\]
We consider the elements of $CFK^-(S^3,E_{2n+1,-1})$ which correspond to $z\otimes w$, $z\otimes e_0$, and $b_{2p+i} \otimes h_1$ under the pairing formula
\[
CFK^-(S^3,E_{2n+1,-1}) \simeq CFA^-(T_\infty,\nu) \boxtimes \widehat{CFD}(S^3 \smallsetminus E).
\]
\begin{lem}
\label{lem:elementb}
The element $b$ can be written mod $U$ as an $\mathbb{F}_2$-linear combination of $a_1\otimes h_1,\cdots,a_{2n+1}\otimes h_1$.
\end{lem}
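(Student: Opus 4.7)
The plan is to pin down the element $b$ explicitly in the box tensor product
\[
\widehat{CFK}(S^3, E_{2n+1,-1}) \simeq \widehat{CFA}(T_\infty, \nu) \boxtimes \widehat{CFD}(S^3 \smallsetminus E),
\]
and then read off its reduction modulo $U$. First, I would enumerate the generators by idempotent pairing: the $\iota_0$-generator $z$ on the type-$A$ side pairs with the $\iota_0$-generators $e_0, f_0, g_0, h_0, w$ on the type-$D$ side, while the $\iota_1$-generators $a_i$ and $b_i$ pair with $e_1, f_1, g_1, h_1$. Because $h_1$ is a sink in $\widehat{CFD}(S^3 \smallsetminus E)$, no higher $\delta$-action on $h_1$ contributes, so the box tensor differential on $a_i \otimes h_1$ reduces to $m_1(a_i) \otimes h_1 = U^i\, b_i \otimes h_1$, which is $U$-divisible. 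Hence each $a_i \otimes h_1$ is a cycle modulo $U$.

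Second, I would identify the split-summand generator $a$ of \cite[Proposition 3.1]{hom2020linear} with $z \otimes w$. Since $\delta(w) = \rho_{12} \otimes w$ is a self-loop and no nontrivial $A_\infty$-operation on $z$ outputs $\rho_{12}$, the element $z \otimes w$ is isolated in the box tensor modulo $U$; comparing bigradings shows this is $a$.

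Third, I would use the defining relation $\iota_{E_{2n+1,-1}}(b) \equiv b + a \pmod{U,V}$, i.e., $(1 + \iota_{E_{2n+1,-1}})(b) = z \otimes w$ in the hat-truncation. Using the bordered involutive formalism reviewed in \Cref{sec:horfrombordered}, the action of $\iota_{E_{2n+1,-1}}$ on $\widehat{CFK}(S^3, E_{2n+1,-1})$ is realized as $(\id \boxtimes \iota_{S^3\smallsetminus E}) \circ (\mathfrak{f}^A_{dual} \boxtimes \id)$ for any choice of bordered involution on $S^3 \smallsetminus E$. Combining this with Alexander/Maslov bigrading bookkeeping, any preimage of $z \otimes w$ under $1 + \iota_{E_{2n+1,-1}}$ must be supported on generators pairing the type-$A$ side with $h_1$, and among these only the generators $a_i \otimes h_1$ lie in the correct bigrading (the $b_i \otimes h_1$ being in a different bigrading due to the $U^i$-shift from $m_1(a_i) = U^i b_i$).

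The main obstacle is carrying out the explicit calculation of the bordered involution on $\widehat{CFD}(S^3 \smallsetminus E)$ and of $\mathfrak{f}^A_{dual}$ on $\widehat{CFA}(T_\infty, \nu)$, together with the necessary bigrading tracking. This is made tractable by exploiting the negative-amphichirality of the figure-eight knot, which forces a highly symmetric form on the bordered involution and lets us identify its interaction with $h_1$ by a finite case analysis.
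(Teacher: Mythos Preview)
Your approach contains a genuine gap in Step~4. The formula you invoke from \Cref{sec:horfrombordered}, namely that $\iota_{E_{2n+1,-1}}$ is realized as $(\id \boxtimes \iota_{S^3\smallsetminus E}) \circ (\mathfrak{f}^A_{dual} \boxtimes \id)$, does not apply here. That formula computes the knot involution via the pairing
\[
\widehat{CFK}(S^3,E_{2n+1,-1}) \simeq \widehat{CFA}(T_\infty,\nu) \boxtimes \widehat{CFD}(S^3 \smallsetminus E_{2n+1,-1}),
\]
where the type-$A$ side is the \emph{longitudinal} pattern $\nu$ and the type-$D$ side is the complement of the \emph{cable}. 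You are instead working in the pairing with the cabling pattern $C_p$ on the type-$A$ side and the figure-eight complement on the type-$D$ side. No analogue of $\mathfrak{f}^A_{dual}$ for $\widehat{CFA}(T_\infty,C_p)$ has been established, and computing the involution in this model would require in addition the bordered involution $\iota_{T_\infty \smallsetminus C_p}$ on the cabling piece via the gluing description of \Cref{sec:horfrombordered}, which you do not address. Consequently, your assertion that preimages of $z\otimes w$ under $1+\iota$ must be supported on the $h_1$-column is not justified; it presupposes exactly the computation you have not carried out.

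More to the point, the involutive machinery is unnecessary for this lemma. The paper's argument is purely non-involutive: one observes that $\partial(a_i\otimes h_1)=U^i\,b_i\otimes h_1$ with $b_i\otimes h_1$ a nonzero $U$-torsion class, that $z\otimes w$ and the $a_i\otimes h_1$ all lie in bidegree $(0,0)$, and then compares with the explicit basis of $CFK_{UV}(S^3,E_{2n+1,-1})$ computed in \cite{hom2020linear}. The elements of bidegree $(0,0)$ whose differentials have this torsion shape are identified there, and matching them forces the span of $z\otimes w$ and the $a_i\otimes h_1$ to contain $b+U\gamma$ or $b+a+U\gamma$; a basis change $b\mapsto b+a$ then finishes the argument. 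Your Steps~1--3 already contain most of what is needed; the correct Step~4 is a bigrading and differential comparison against the known model, not an inversion of $1+\iota$.
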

\begin{proof}
It follows from the $A_\infty$ operations of $CFA^-(T_\infty,\nu)$ that $\partial(a_i \otimes h_1) = U^i b_i\otimes h_1$. Also, $b_i \otimes h_1$ are cycles in $HFK^-(S^3,E_{2n+1,-1})$, which are not boundaries since $\rho_3 h_1$ is not a cycle. Furthermore, it is clear that $z\otimes w$ and $a_i\otimes h_1$ have bidegree $(0,0)$. Comparing this with computations in \cite{hom2020linear}, we deduce that the $\mathbb{F}_2$-linear span of those elements should contain either $b+a+U\gamma$ or $b+U\gamma$ for some cycle $\gamma$. After a basis change by $b\mapsto b+a$, we may assume without loss of generality that the span actually contains $b+U\gamma$.
\end{proof}

\begin{lem}
\label{lem:cabledmap}
Let $K$ be a knot such that $CFK_{\mathcal{R}}(S^3,K)$ admits a direct summand isomorphic to $\mathcal{R}$, which is concentrated at bidegree $(0,0)$. If $f\colon\widehat{CFD}(S^3 \smallsetminus E)\rightarrow \widehat{CFD}(S^3 \smallsetminus K)$ is a type-$D$ morphism with $\mathhat(f) \sim 0$, then the map
\[
\begin{split}
f_{2n+1,-1}\colon\widehat{CFD}(S^3 \smallsetminus E_{2n+1,-1}) &\simeq \widehat{CFDA}(T_\infty \smallsetminus C_p) \boxtimes \widehat{CFD}(S^3 \smallsetminus E) \\
&\xrightarrow{\id\boxtimes f} \widehat{CFDA}(T_\infty \smallsetminus C_p) \boxtimes \widehat{CFD}(S^3 \smallsetminus K) \simeq \widehat{CFD}(S^3 \smallsetminus K_{2n+1,-1})
\end{split}
\]
satisfies $\mathhat(f_{2n+1,-1})(a_i \otimes h_1) =0$ for each $1 \le i \le 2n+1$.
\end{lem}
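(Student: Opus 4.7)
The plan is to reduce the computation of $\mathhat(f_{2n+1,-1})(a_i \otimes h_1)$ to the box-product formula in terms of the algebra output $f(h_1) \in \mathcal{A}(T^2) \otimes \widehat{CFD}(S^3 \smallsetminus K)$ and the $A_\infty$ operations that $\widehat{CFA}(T_\infty, C_p)$ supplies on the generators $a_i$, and then to leverage the null-homotopy hypothesis $\mathhat(f) \sim 0$ together with the $\mathcal{R}$-summand splitting of $CFK_{\mathcal{R}}(S^3, K)$ to kill the surviving contributions.

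Since $h_1$ lies in the $\iota_1$-idempotent component of $\widehat{CFD}(S^3 \smallsetminus E)$ and satisfies $\delta(h_1) = 0$, the type-$D$ output decomposes as
\[
f(h_1) = \iota_1 \otimes n_0 + \rho_2 \otimes n_1 + \rho_{23} \otimes n_2,
\]
with $n_0, n_2 \in \iota_1 \widehat{CFD}(S^3 \smallsetminus K)$ and $n_1 \in \iota_0 \widehat{CFD}(S^3 \smallsetminus K)$, and the type-$D$ compatibility imposes relations among $\delta(n_0), \delta(n_1), \delta(n_2)$ via the algebra structure. Next I would invoke the rigidity of the $A_\infty$ structure of $\widehat{CFA}(T_\infty, C_p)$ on $a_i$: the only operations available are $m_2(a_{2n+1}, \rho_2) = z$ and $m_3(a_j, \rho_2, \rho_1) = a_{j+1}$ for $j < 2n+1$, with $m_1(a_i) = 0$ in the hat-flavored theory, and no operation of the form $m_k(a_i, \rho_2, \rho_3, \ldots)$ exists. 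Consequently, the $\rho_{23} \otimes n_2$ component of $f(h_1)$ contributes nothing at any order in the box-product expansion, and unpacking the formula shows that $\mathhat(f_{2n+1,-1})(a_i \otimes h_1)$ reduces to the identity contribution $a_i \otimes n_0$, the $\rho_2$-contribution $\delta_{i,2n+1}\cdot z \otimes n_1$, and secondary $a_j \otimes n'$- or $z \otimes n'$-type contributions coming from iterated $\delta$-outputs of $n_0, n_1$ whose Reeb-chord tails are $\rho_1$ or $\rho_2$.

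The final and most delicate step is to use the null-homotopy $\mathhat(f) \sim 0$ together with the $\mathcal{R}$-summand of $CFK_{\mathcal{R}}(S^3,K)$ at bigrading $(0,0)$ to eliminate these remaining contributions. The $\mathcal{R}$-summand supplies a distinguished $\delta$-closed generator of $\widehat{CFD}(S^3 \smallsetminus K)$ at the correct idempotent and bigrading through which the chain-level null-homotopy can be lifted to a type-$D$ homotopy, replacing $f$ by a homotopic morphism $f'$ whose output $f'(h_1)$ is supported only on the $\rho_{23}$-component. Combined with the $A_\infty$ restrictions of the preceding paragraph, this yields $\mathhat(f'_{2n+1,-1})(a_i \otimes h_1) = 0$; choosing the connecting type-$D$ homotopy so that it also vanishes on $a_i \otimes h_1$ (which is feasible since $a_i \otimes h_1$ is a cycle whose nontrivial $A_\infty$ inputs are exhausted by the restrictions above) then gives the claimed strict equality. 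The main obstacle is this lifting step: the functor $\widehat{CFA}(T_\infty, \nu) \boxtimes -$ is not full, so a chain-level null-homotopy does not automatically upgrade to a type-$D$ null-homotopy, and the $\mathcal{R}$-summand hypothesis is essential for controlling idempotents and bigradings so that the lifted homotopy kills precisely the $\iota_1$- and $\rho_2$-components of $f(h_1)$.
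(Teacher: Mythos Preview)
Your proposal correctly identifies the first reduction: since every nontrivial $A_\infty$ operation on $a_i$ in $\widehat{CFA}(T_\infty,C_p)$ begins with $\rho_2$, the quantity $\mathhat(f_{2n+1,-1})(a_i\otimes h_1)$ depends only on the $\iota_1$-- and $\rho_2$--components of $f(h_1)$. However, the heart of the argument---eliminating those components---is exactly where your proposal has a gap. You propose to lift the chain-level nullhomotopy $\mathhat(f)\sim 0$ to a type-$D$ homotopy carrying $f$ to some $f'$ with $f'(h_1)$ supported only in $\rho_{23}$, and you note yourself that $\widehat{CFA}(T_\infty,\nu)\boxtimes-$ is not full. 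Invoking the $\mathcal{R}$-summand ``for controlling idempotents and bigradings'' does not supply the missing argument: the $\mathcal{R}$-summand only produces one distinguished $\iota_0$-generator, whereas the components of $f(h_1)$ you need to kill live in the complementary (acyclic) summand $N$, and there is no general mechanism that turns a hat-level nullhomotopy into a type-$D$ homotopy annihilating prescribed $\iota_1$- and $\rho_2$-outputs there.

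The paper does not attempt such a lifting. Instead it argues by contradiction using two structural ingredients you do not use. First, the $\mathcal{R}$-summand hypothesis is used via \cite[Theorem~11.26]{lipshitz2018bordered} to split $\widehat{CFD}(S^3\smallsetminus K)\simeq \widehat{CFD}(S^3\smallsetminus O)\oplus N$ with $N$ built entirely from \emph{stable chains}; this combinatorial normal form is what makes the case analysis tractable. Second, the relation $\partial e_0=\rho_3 e_1+\rho_1 h_1$ in $\widehat{CFD}(S^3\smallsetminus E)$ transports information about $f(h_1)$ to the $\iota_0$-element $f(e_0)$, where the condition $\mathhat(f)=0$ has direct content. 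Concretely: if $f(h_1)$ has a nonzero pure $\iota_1$-component $\sum_{\beta\in S_1}\beta$, then following the stable chains containing those $\beta$ forces $f(e_0)$ to contain the corresponding $\iota_0$-endpoints $a_\beta$, so $\mathhat(f)(e_0\otimes s)$ is a nontrivial cycle, contradicting $\mathhat(f)\sim 0$. If instead $f(h_1)$ has a $\rho_2$-component $\rho_2\sum_{\alpha\in S_0}\alpha$, one first homotopes away those $\alpha$ for which $\rho_2\alpha$ is a boundary, and then the remaining $\alpha$ sit in stable chains of the form $\alpha\xrightarrow{\rho_3}b_\alpha\xrightarrow{\rho_{23}}\cdots$, so $\partial f(h_1)$ contains uncancelable $\rho_{23}b_\alpha$ terms, contradicting $\partial h_1=0$. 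Neither of these mechanisms appears in your outline; the stable-chain analysis and the role of $e_0$ are the missing ideas.
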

\begin{proof}
We may assume without loss of generality that the tensored differential obtained by box-tensoring the type-$D$ modules of 0-framed knot complements with $\widehat{CFA}(T_\infty,\nu)$ is zero. Then the condition $\mathhat(f) \sim 0$ becomes $\mathhat(f)=0$. Since $\widehat{CFA}(T_\infty,\nu)$ has one $\iota_0$-generator, no $\iota_1$-generators, and no nontrivial $A_\infty$ operations, it follows that $f(w)=\sum_{b\in B} \rho_b b$, where $B$ is a basis of $\widehat{CFD}(S^3 \smallsetminus K)$ and $\rho_b\in \{\rho_1,\rho_2,\rho_3,\rho_{12},\rho_{23},\rho_{123}\}$. Suppose that $\mathhat(f_{2n+1,-1})(a_i \otimes h_1) \ne 0$ for some $1 \le i \le 2n+1$. Recall from \cite[Theorem 11.26]{lipshitz2018bordered} that $\widehat{CFD}$ of a knot complement consists of stable chains and unstable chains, together with possible linear changes of basis along the endpoints of stable chains. Due to the assumption that $CFK_{\mathcal{R}}(S^3,K)$ admits an $\mathcal{R}$ summand, we should have a splitting
\[
\widehat{CFD}(S^3 \smallsetminus K) \simeq \widehat{CFD}(S^3 \smallsetminus O) \oplus N
\]
and $N$ consists only of stable chains. Denote by $t$ the generator of $\widehat{CFD}(S^3 \smallsetminus O)$, so that $\partial t=\rho_{12}t$. Suppose that $f(h_1)$, which is an element of $\widehat{CFD}(S^3 \smallsetminus K)$, is not contained in the $N$ summand. Denote by $p_u$ the projection
\[
p_u\colon\widehat{CFD}(S^3 \smallsetminus K) \rightarrow \widehat{CFD}(S^3 \smallsetminus O).
\]
Then we should have $p_u(f(h_1))=\rho_2 t$. But then we get
\[
\rho_{12}p_u(f(e_0))=\partial p_u(f(e_0)) = p_u(f(\partial e_0)) = \rho_3 p_u(f(e_1))+\rho_{12} w.
\]
Since $\rho_{12}w$ is not a multiple of $\rho_3$, it follows that $p_u(f(e_0))=w$, which would imply that $\mathhat(f)$ cannot be nullhomotopic, a contradiction. Hence we should have $f(h_1)\in N$.

Recall again that $N$ consists of stable chains. Denote the sets of its $\iota_0$-generators and $\iota_1$-generators by $B_0$ and $B_1$, respectively. Since $h_1$ is a cycle and all nontrivial $A_\infty$-operations of $\widehat{CFA}(T_\infty,C_p)$ starting from $a_i$ begins with $\rho_2$, in order to have $\mathhat(f_{2n+1,-1})(a_i \otimes h_1)\ne 0$, we should have 
\[
f(h_1) = \rho_2 \cdot \sum_{\alpha \in S_0} \alpha + (\text{terms involving }\iota_1\text{-generators})
\]
for some nonempty subset $S_0 \subset B_0$. Indeed, assume that $S_0 = \emptyset$. Then we can write 
\[
f(h_1) = \sum_{\beta \in S_1} \beta + \rho_{23} \cdot \sum_{\beta \in T_1} \beta 
\]
for some $S_1,T_1 \subset B_1$ with $S_1 \cap T_1 = \emptyset$. Since $h_1$ is a cycle and no nontrivial $A_\infty$-operations of $\widehat{CFA}(T_\infty,C_p)$ starts with $\rho_{23}$, we should have $S_1 \ne \emptyset$. Since $\partial e_0 = \rho_3 e_1 + \rho_1 h_1$, we have 
\[
\partial f(e_0) = \rho_3 f(e_1) + \rho_1 \sum_{\beta \in S_1} \beta + \rho_{123} \sum_{\beta \in T_1} \beta.
\]
Since $N$ consists only of stable chains, the above equation implies that each $\beta \in S_1$ must be contained in a stable chain of the form
\[
a_\beta \xrightarrow{\rho_1} \beta \xleftarrow{\rho_{23}}\cdots\xleftarrow{\rho_{123}} a^{\prime}_{\beta}
\]
and $f(e_0)$ contains $a_\beta$. But then, if $s$ denotes the generator of $\widehat{CFA}(T_\infty,\nu)$, the image under $\mathhat(f)$ of $e_0 \otimes s$ must contain $a_\beta \otimes s$, which implies that $\mathhat(f)(e_0 \otimes s)$ is a cycle in $\widehat{CFK}(S^3,K)$ whose homology class is nontrivial. Since $e_0 \otimes s$ is a cycle of $\widehat{CFK}(S^3,E)$, this is a contradiction. Hence $\mathhat(f)$ is not nullhomotopic, a contradiction.

Since each $\alpha \in S_0$ is either the startpoint or the endpoint of a stable chain, we should have either $\partial (\rho_2 \alpha) = \rho_{23} b_{\alpha}$ for some $b_{\alpha}\in B_1$ or $\rho_2 \alpha = \partial b_{\alpha}$ for some $b_{\alpha} \in B_1$. However, if $\rho_2 \alpha = \partial b_{\alpha}$, then since $h_1$ is a cycle, one can eliminate $\rho_2 \alpha$ from $f(h_1)$ by homotoping $f$. Hence we may assume without loss of generality that we do not have such basis elements in $S_0$. Then we must have 
\[
0 = f(\partial h_1) = \partial f(h_1) = \rho_{23} \cdot \sum_{\alpha \in S_0} b_{\alpha} + \partial(\text{terms involving }\iota_1\text{-generators}).
\]
Furthermore, for any $\iota_1$-generator $\beta \in B_1$, its differential $\partial \beta$ is always of the form $\rho_{2}\gamma$ or $\rho_{23} \gamma$, and the latter terms should cancel out the $\rho_{23}b_\alpha$ terms in the above equation. However, since each $b_\alpha$ is in a stable chain of the form 
\[
\alpha \xrightarrow{\rho_3} b_\alpha \xrightarrow{\rho_{23}}\cdots\xrightarrow{\rho_{2}} c_\alpha,
\]
there is no $\iota_1$-generator whose differential can cancel out $\rho_{23}b_\alpha$. Hence no cancellation can happen, which means that we should have $S_0=\emptyset$, a contradiction. Therefore $\mathhat(f_{2n+1,-1})(a_i \otimes h_1) =0$.
\end{proof}

We are finally ready to prove that $K_{2n+1,1}$ has infinite order in $\mathcal{C}$.

\begin{thm}
\label{thm:odd1hasinfiniteorder}
If $K$ is torsion in $\mathcal{C}$ with $\mathfrak{A}(K) = 1$, then for any nonzero integer $n$ the cable $K_{2n+1,1}$ has infinite order in $\mathcal{C}$.
\end{thm}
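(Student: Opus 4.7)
The plan is to show $\mathfrak{A}(K_{2n+1,1})\in\mathfrak{I}^U_K$ is comparable to both $C_O$ and $C_E$, so that \Cref{cor:infiniteorder} forces $K_{2n+1,1}$ to have infinite order. Using the standard concordance identification $-K_{2n+1,1}\simeq(-K)_{2n+1,-1}$ together with $-K$ being torsion and $\mathfrak{A}(-K)=\mathfrak{A}(K)=1$, the problem reduces (after replacing $K$ by $-K$) to showing $\mathfrak{A}(K_{2n+1,-1})$ is comparable to both $C_O$ and $C_E$. By \Cref{lem:borderedinterpret}, there is a knot $K'$ concordant to $K$ with $\widehat{CFD}(S^3\smallsetminus K')\cong\widehat{CFD}(S^3\smallsetminus E)\oplus N$ whose direct-summand inclusion $i$ is almost bordered-$\iota$-local. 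Box-tensoring $i$ with $\widehat{CFDA}(T_\infty\smallsetminus C_p)$ produces the cabled direct-summand inclusion $i_{2n+1,-1}\colon\widehat{CFD}(S^3\smallsetminus E_{2n+1,-1})\hookrightarrow\widehat{CFD}(S^3\smallsetminus K'_{2n+1,-1})$.

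Setting $g:=i+\iota_{S^3\smallsetminus K'}\circ(\id\boxtimes i)\circ\iota_{S^3\smallsetminus E}^{-1}$, we have $\mathhat(g)\sim 0$ by hypothesis. \Cref{lem:cabledmap} applied to $f=g$ gives $\mathhat(g_{2n+1,-1})(a_i\otimes h_1)=0$ on the nose for every $i$, and by \Cref{lem:elementb} the central generator $b\in CFK^-(E_{2n+1,-1})$ is modulo $U$ an $\mathbb{F}_2$-linear combination of the $a_i\otimes h_1$, so $\mathhat(g_{2n+1,-1})(b)=0$. Unpacking through \Cref{lem:conjugatemorphism} and the involutive identities $\iota^{-1}(b)=b+a$ and $\iota(a)=a$ in $\widehat{CFK}(E_{2n+1,-1})$, this equality becomes
\[
\tilde b+\iota_{K'_{2n+1,-1}}(\tilde b)+\iota_{K'_{2n+1,-1}}(\tilde a)=0 \quad\text{in}\quad \widehat{CFK}(S^3,K'_{2n+1,-1}),
\]
where $\tilde a,\tilde b$ are the cabled images of $a,b$. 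Rearranging, $(1+\iota_{K'_{2n+1,-1}})(\tilde b+\tilde a)=\tilde a$, and since $\tilde a$ represents the generator of $H_\ast(U^{-1}CFK^-(K'_{2n+1,-1}))$ by locality, \Cref{lem:figeightmap} with $a'=\tilde b+\tilde a$ and $x'=\tilde a$ yields an almost $\iota_K$-local map $C_E\to CFK^-(K'_{2n+1,-1})$. Thus $C_E\leq\mathfrak{A}(K'_{2n+1,-1})$, giving comparability with $C_E$.

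Comparability with $C_O$ amounts, by \Cref{thm:comparison}, to ruling out $\mathfrak{A}(K'_{2n+1,-1})=C_E$, since incomparability with $C_O$ would force $\mathfrak{A}=C_O+C_E=C_E$. The plan is to obstruct the existence of an almost $\iota_K$-local map $\phi\colon CFK^-(K'_{2n+1,-1})\to C_E$ by a bigrading argument: such a $\phi$ would send $\tilde a$ to the tower generator $x\in C_E$ modulo torsion, and $\phi(\tilde b+\tilde a)$ would be a bidegree-preserving element of $C_E$ satisfying $(1+\iota)\phi(\tilde b+\tilde a)=x$ modulo torsion. The bigradings of $a$ and $b$ in $CFK^-(E_{2n+1,-1})$, computable from the Hom--Kang--Park model in \cite{hom2020linear}, are inherited by $\tilde a,\tilde b$ via the cabled inclusion, and for $|2n+1|>1$ the bidegree of $\tilde b+\tilde a$ does not match that of any $(1+\iota)$-preimage of $x$ among the five generators of $C_E$, yielding the desired contradiction. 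Combined with the preceding comparability with $C_E$, \Cref{cor:infiniteorder} forces $K'_{2n+1,-1}$, hence $K_{2n+1,1}$, to have infinite order. The main obstacle is this final bigrading-incompatibility argument, which demands the precise grading data from the Hom--Park cabled model together with a case check against each of the five generators of $C_E$; should small-$n$ boundary values resist the generic bigrading argument, a refinement using extra $\iota_K$-equivariant cycles supplied by the $N$-summand of $\widehat{CFD}(K')$ would be needed to separate $\mathfrak{A}(K'_{2n+1,-1})$ from $C_E$.
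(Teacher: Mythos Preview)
Your reduction to $K_{2n+1,-1}$ and your derivation of $C_E\le\mathfrak{A}(K'_{2n+1,-1})$ are essentially the paper's argument, correctly executed: setting $g=i+\iota_{S^3\smallsetminus K'}\circ(\id\boxtimes i)\circ\iota_{S^3\smallsetminus E}^{-1}$, applying \Cref{lem:cabledmap} and \Cref{lem:elementb} to get $\mathhat(g_{2n+1,-1})(b)=0$, and unwinding via \Cref{lem:conjugatemorphism} to $(1+\iota_{K'_{2n+1,-1}})(\tilde b+\tilde a)=\tilde a$ so that \Cref{lem:figeightmap} applies.

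The gap is in your comparability-with-$C_O$ step. Your plan is to rule out $\mathfrak{A}(K'_{2n+1,-1})=[C_E]$ by arguing that no bidegree-$(0,0)$ element of $C_E$ is a $(1+\iota_E)$-preimage of the tower generator $x$. But this is simply false: the generator $a\in C_E$ sits in bidegree $(0,0)$ and satisfies $(1+\iota_E)(a)=x$ by the very definition of $\iota_E$ in \Cref{exmp:figureeight}. Since $\tilde a$ and $\tilde b$ both lie in bidegree $(0,0)$ (this is what \Cref{lem:elementb} gives you), a putative almost $\iota_K$-local map $\phi\colon CFK^-(K'_{2n+1,-1})\to C_E$ could perfectly well send $\tilde b+\tilde a\mapsto a$ and $\tilde a\mapsto x$, consistent with all the grading and $\iota$-equivariance constraints you have written down. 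Your hedge about ``extra $\iota_K$-equivariant cycles from the $N$-summand'' does not point to any concrete obstruction either.

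The paper takes a completely different and much more direct route to comparability with $C_O$: it \emph{constructs} an almost $\iota_K$-local map $C_O\to CFK^-(K'_{2n+1,-1})$ by showing $\iota_{K'_{2n+1,-1}}(\tilde a)=\tilde a$ in the hat-flavored truncation. This follows from the identity $\tilde a=\tilde b+\iota(\tilde b)$ you already have, together with $\iota^2(\tilde b)=\tilde b$; the latter uses $\iota^2\sim 1+\Phi\iota\Phi\iota$ and the observation that $\Phi(\tilde b)=0$ in $\widehat{CFK}$ because $\partial^-\tilde b$ is divisible by $U^n$ (inherited from $\partial^- b$ in $CFK^-(E_{2n+1,-1})$). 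Once $\iota(\tilde a)=\tilde a$, the inclusion of $\mathbb{F}_2[U]\cdot\tilde a$ is the desired almost $\iota_K$-local map, and \Cref{cor:infiniteorder} finishes the proof. This $\Phi(\tilde b)=0$ step is exactly the missing input your bigrading approach lacks.
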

\begin{proof}
Note that $K_{2n+1,1}$ has infinite order in $\mathcal{C}$ if and only if $-K_{2n+1,1} = (-K)_{2n+1,-1}$ has infinite order in $\mathcal{C}$. Moreover, since $\mathfrak{A}(K) = \mathfrak{A}(-K)$, it is enough to show that $K_{2n+1,-1}$ has infinite order in $\mathcal{C}$.

By \Cref{lem:almostbordered} and \Cref{lem:borderedinterpret}, we may assume that there exists a type-$D$ morphism $f\colon\widehat{CFD}(S^3 \smallsetminus E) \rightarrow \widehat{CFD}(S^3 \smallsetminus K)$ such that, for any choice of bordered involutions $\iota_{S^3 \smallsetminus E}\in \Inv_D(S^3 \smallsetminus E)$ and $\iota_{S^3 \smallsetminus K} \in \Inv_D(S^3 \smallsetminus K)$, we have $$\mathhat\left(f + \iota_{S^3 \smallsetminus K} \circ f \circ \iota^{-1}_{S^3 \smallsetminus E}\right) \sim 0.$$ Then, by \Cref{lem:elementb} and \Cref{lem:cabledmap}, we have 
\[
\mathhat\left(f_{2n+1,-1} + \left(\iota_{S^3 \smallsetminus K} \circ \left(\id\boxtimes f\right) \circ \iota^{-1}_{S^3 \smallsetminus E}\right)_{2n+1,-1}\right)(b) = \mathhat\left(\left(f + \iota_{S^3 \smallsetminus K} \circ \left(\id\boxtimes f\right) \circ \iota^{-1}_{S^3 \smallsetminus E}\right)_{2n+1,-1}\right)(b) =0.
\]
Since it follows from the description of bordered involutions of glued manifolds that
\[
\left(\iota_{S^3 \smallsetminus K} \circ (\id\boxtimes f) \circ \iota^{-1}_{S^3 \smallsetminus E}\right)_{2n+1,-1} \sim \iota_{S^3 \smallsetminus K_{2n+1,-1}} \circ (\id\boxtimes f_{2n+1,-1}) \boxtimes \iota^{-1}_{S^3 \smallsetminus E_{2n+1,-1}},
\]
we deduce from \Cref{lem:conjugatemorphism} that
\[
\begin{split}
    &\mathhat\left(f_{2n+1,-1}\right)(b)+\left(\iota_{K_{2n+1,-1}} \circ \mathhat(f_{2n+1,-1}) \circ \iota^{-1}_{E_{2n+1,-1}}\right)(b) \\
    &= \mathhat\left(f_{2n+1,-1} + \iota_{S^3 \smallsetminus K_{2n+1,-1}} \circ (\id\boxtimes f_{2n+1,-1}) \boxtimes \iota^{-1}_{S^3 \smallsetminus E_{2n+1,-1}}\right)(b) =0.
\end{split}
\]
Since $\iota^{-1}_{E_{2n+1,-1}}(b)=b+a \pmod{U,V}$ on $CFK_{\mathcal{R}}(S^3,E_{2n+1,-1})$, we get
\[
\iota^{-1}_{K_{2n+1,-1}}\circ\mathhat(f_{2n+1,-1})(b)=\mathhat(f_{2n+1,-1})(b+a).
\]
Denote the differentials of minus-flavored knot Floer complexes by $\partial^-$. Then $\partial^- b$ is a multiple of $U^n$, so $\partial^- \mathminus(f_{2n+1,-1})(b)$ should also be a multiple of $U^n$. Since $n>1$, we have $\Phi\circ \mathminus(f_{2n+1,-1})(b) = 0 \pmod{U}$, which implies that $\Phi\circ \mathhat(f_{2n+1,-1})(b)=0$. Hence we have
\[
\begin{split}
\iota_{K_{2n+1,-1}}\circ \mathhat(f_{2n+1,-1})(b) &= \iota^{-1} _{K_{2n+1,-1}} \circ \left(1+\iota_{K_{2n+1,-1}}\Phi\iota_{K_{2n+1,-1}}\Phi\right)\circ \mathrm{Hat}(f_{2n+1,-1})(b) \\
&= \iota^{-1} _{K_{2n+1,-1}}\circ\mathhat(f_{2n+1,-1})(b) \\
&= \mathhat(f_{2n+1,-1})(b+a).
\end{split}
\]

Furthermore, since $a$ is a cycle which generates the homology of $U^{-1}CFK^-(S^3,E_{2n+1,-1})$, its image $\mathminus(f_{2n+1,-1})(a)$ should also be a cycle which generates the homology of $U^{-1}CFK^-(S^3,K_{2n+1,-1})$. It then follows from \Cref{lem:figeightmap} that $C_E \le \mathfrak{A}(K_{2n+1,-1})$ in $\mathfrak{I}^U_K$.

Now, we have 
\[
\begin{split}
\iota_{K_{2n+1,-1}}\circ \mathhat(f_{2n+1,-1})(a) &= \iota_{K_{2n+1,-1}} \circ \left(\mathhat(f_{2n+1,-1})(b) + \iota_{K_{2n+1,-1}}\circ \mathhat(f_{2n+1,-1})(b)\right) \\
&= \mathhat(f_{2n+1,-1})(a)+\iota_{K_{2n+1,-1}}\Phi\iota_{K_{2n+1,-1}}\Phi\circ\mathhat(f_{2n+1,-1})(b) \\
&=  \mathhat(f_{2n+1,-1})(a).
\end{split}
\]
Hence the inclusion of the subcomplex of $CFK^-(S^3,K_{2n+1,-1})$ generated by $\mathminus(f_{2n+1,-1})(a)$ is an almost $\iota_K$-local map by \Cref{lem:almostbordered}. This implies that $C_O \geq \mathfrak{A}(K_{2n+1,-1})$. Therefore it follows from \Cref{cor:infiniteorder} that $C_O<\mathfrak{A}(K_{2n+1,-1})$ and hence $K_{2n+1,-1}$ has infinite order in $\mathcal{C}$.
\end{proof}


To prove \Cref{thm:mainthm-indep}, we have to approximate $CFK^-(S^3,K_{2n+1,-1})$ by simpler horizontal almost $\iota_K$-complexes. To do so, for each $n>1$, we consider an element $C_n$ of $\mathfrak{I}^U_K$, generated by elements $a_n,b_n,c_n,d_n,x_n$, where $a_n$ and $x_n$ have bidegree $(0,0)$. The differential is given by
\[
\partial a_n = U^n b_n,\,\partial c_n = U^n d_n, \, \partial b_n = \partial d_n = \partial x_n = 0
\]
and the involution (on its hat-flavored truncation $\widehat{C}_n$) is given by
\[
\iota_{C_n}(a_n)=a_n+x_n,\,\iota_{C_n}(b_n) = c_n,\,\iota_{C_n}\text{ fixes } d_n,x_n.
\]
Since $\Phi=0$ on $\widehat{C}_n$, it is clear that $C_n$ is a well-defined horizontal almost $\iota_K$-complex. Furthermore, since the chain map $f_n:C_n\rightarrow C_{n+1}$ defined by 
\[
f_n(a_n)=a_{n+1},\,f_n(b_n)=Ub_{n+1},\, f_n(c_n)=f_n(d_n)=0,\, f_n(x_n)=x_{n+1}
\]
satisfies $f_n \circ \iota = \iota \circ f_n$ on $\widehat{C}_n$, and it is clear that there exists an almost $\iota_K$-local map from $C_O$ to $C_n$ but not from $C_n$ to $C_O$, it follows from \Cref{thm:comparison} that 
\[
0 < C_2 \le C_3 \le C_4 \le \cdots.
\]
We will see that the complexes $C_n$ provide both lower and upper bounds for $CFK^-(S^3,K_{2n+1,-1})$.

\begin{lem}
\label{lem:upperbound}
Let $K$ be a knot such that $CFK^-(S^3,K)$ admits an $\mathbb{F}_2[U]$ direct summand, generated by an element $x$ of bidegree $(0,0)$. If $n$ is a positive integer such that $U^n$ annihilates the torsion submodule of $HFK^-(S^3,K)$, then we have that $\mathfrak{A}(K) < C_{n+1}$. 
\end{lem}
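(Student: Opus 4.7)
The plan is to prove this in two steps: construct an explicit almost $\iota_K$-local map $f\colon CFK^-(S^3,K) \to C_{n+1}$ establishing $\mathfrak{A}(K) \le C_{n+1}$, and then rule out the reverse inequality using the $U^n$-torsion hypothesis to obtain strictness.

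For the construction, I would use the direct summand hypothesis to split $CFK^-(S^3,K) = \mathbb{F}_2[U]\langle x\rangle \oplus T$ as chain complexes, after replacing by a homotopy equivalent model with reduced differential. Since $x$ is a cycle in a free summand, the map $\Phi$ vanishes on $\bar{x}$. Applying the identity $\iota_K^2 \sim 1 + \Phi\iota_K\Phi\iota_K$ to $\bar{x}$ and matching the $\bar{x}$-coefficient on both sides then forces $\iota_K(\bar{x}) = \bar{x} + \bar{y}$ for some cycle $\bar{y} \in \widehat{T}$ at bidegree $(0,0)$. Lifting $\bar{y}$ to $y \in T$ gives $\partial y = U\gamma$ with $\gamma \in T$ a cycle, and by the $U^n$-torsion hypothesis there exists $\delta \in T$ with $\partial\delta = U^n\gamma$. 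I then define $f$ by $f(x) = x_{n+1}$, $f(y) = d_{n+1}$, $f(\gamma) = 0$, $f(\delta) = 0$, extended by zero on a suitably chosen complementary basis for $T$. The chain condition on $y$ reads $\partial f(y) = 0 = Uf(\gamma) = f(\partial y)$, and the involution condition at $\bar{x}$ reads $\widehat{f}(\iota_K\bar{x}) = \widehat{f}(\bar{x} + \bar{y}) = x_{n+1} + d_{n+1} = \iota_{C_{n+1}}\widehat{f}(\bar{x})$; locality is immediate since both localized complexes are $\mathbb{F}_2[U,U^{-1}]$.

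For strictness, I would suppose an almost $\iota_K$-local map $g\colon C_{n+1} \to CFK^-(S^3,K)$ existed. The chain map condition gives $\partial g(a_{n+1}) = U^{n+1}g(b_{n+1})$; by the torsion hypothesis there is $\epsilon$ with $U^n g(b_{n+1}) = \partial\epsilon$. Exploiting $\iota(d_{n+1}) = d_{n+1}$ and $(1+\iota)(x_{n+1}) = d_{n+1}$ together with the reduced differential, one can show that $g(d_{n+1})$ is a cycle lifting $\bar{y}$; writing $g(d_{n+1}) - y = U\eta$ and taking differentials yields $\partial\eta = \gamma$, forcing $[\gamma] = 0$ in $HFK^-(S^3,K)$. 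A careful analysis of the Bockstein-type long exact sequence relating $HFK^-$ and $\widehat{HFK}$, together with the identification $\widehat{\Phi}(\bar{y}) = \bar{\gamma}$ and the observation that $\bar{y}$ has nontrivial image whenever $\mathfrak{A}(K) \ne C_O$, then yields a contradiction.

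The main obstacle is the strictness argument, which requires a subtle comparison of the $U$-torsion structure of $C_{n+1}$ (where the torsion class $[b_{n+1}]$ has $U$-order exactly $n+1$) with that of $CFK^-(S^3,K)$ (where all torsion is killed by $U^n$), mediated by the involution. The construction of $f$ itself, while requiring careful bookkeeping when extending to all of $T$, is essentially dictated by the normal form $\iota_K(\bar{x}) = \bar{x} + \bar{y}$ imposed by $\Phi(\bar{x}) = 0$.
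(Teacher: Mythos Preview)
Your step 1 contains a concrete error: in $C_{n+1}$ the involution \emph{fixes} $x_{n+1}$ (the formula $\iota(x)=x+d$ belongs to $C_E$, not to $C_m$ for $m\ge 2$, where $\Phi=0$ on the hat-truncation). With the correct involution your check at $\bar x$ reads $\widehat f(\iota_K\bar x)=x_{n+1}+d_{n+1}\ne x_{n+1}=\iota_{C_{n+1}}\widehat f(\bar x)$, so the map as written is not almost $\iota_K$-local. More seriously, even after correcting this (say by sending $y\mapsto 0$), you have only verified $\iota$-equivariance at $\bar x$. The hypotheses say nothing about how $\iota_K$ acts on $\widehat T$, and in general $\iota_K$ can send elements of $\widehat T$ to elements with nonzero $\bar x$-coefficient (this already happens for the figure-eight: $\iota_E(\bar a)=\bar a+\bar x$ with $a$ in the acyclic summand). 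A map sending $x\mapsto x_{n+1}$ and killing most of $\widehat T$ cannot intertwine such behavior, so constructing an explicit map $CFK^-(S^3,K)\to C_{n+1}$ requires control over $\iota_K$ on all of $\widehat T$ that you do not have.

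The paper bypasses step 1 entirely. It proves only that no almost $\iota_K$-local map $C_{n+1}\to CFK^-(S^3,K)$ exists, then observes that $K\#E$ satisfies the same hypotheses, so no such map exists into $CFK^-(S^3,K)\otimes C_E$ either; \Cref{thm:comparison} then forces $\mathfrak A(K)<[C_{n+1}]$. For the non-existence itself, your opening observation (that $\partial g(a_{n+1})=U^{n+1}g(b_{n+1})$ conflicts with the $U^n$-torsion bound, forcing $\widehat g(a_{n+1})$ to be a cycle $z$) is correct and leads to $\widehat g(x_{n+1})=z+\iota_K(z)$. But the endgame is not a Bockstein analysis: the decisive input is that $\iota_K$ is the hat-truncation of a skew-equivalence on $CFK_{\mathcal R}(S^3,K)$, so a $\partial_U$-cycle with $U$-torsion class is taken to something coming from a $\partial_V$-cycle with $V$-torsion class, which cannot contain the free generator $\bar x$---contradicting locality. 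This use of the full $\mathcal R$-coefficient structure is the missing ingredient in your step 2.
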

\begin{proof}
Suppose that there exists an almost $\iota_K$-local map $f\colon C_{n+1}\rightarrow CFK^-(S^3,K)$. By \Cref{lem:torsionsplitting}, $CFK_{\mathcal{R}}(S^3,K)$ admits a splitting
\[
CFK_{\mathcal{R}}(S^3,K) \simeq A \oplus \mathbb{F}_2[U]\cdot x
\]
for an acyclic summand $A$. Choose a basis $\{y_1,\cdots,y_k,z_1,\cdots,z_k\}$ of $A$ which is \emph{horizontally simplified}, i.e.\ $\partial y_i = U^{n_i} z_i$ for some $n_i \le n$ and $\partial z_i = 0$ for all $i=1,\cdots ,k$. Since $\partial a_{n+1}=U^{n+1}b_{n+1}$ in $C_{n+1}$, we get
\[
\partial f(a_{n+1}) = f(\partial a_{n+1})=U^{n+1}f(b_{n+1}).
\]
Since $f(b_{n+1})$ is a cycle whose homology class is torsion, we have
\[
f(b_{n+1}) = \sum_{i\in S_z} z_i
\]
for some $S_z\subset \{1,\cdots,k\}$. Write 
\[
f(a_{n+1}) = z+ \sum_{i\in S_y} U^{c_i} y_i
\]
for some $S_y \subset \{1,\cdots,k\}$, $c_i>0$, and $z\in \spa_{\mathbb{F}_2[U]}(x,z_1,\cdots,z_k)$. Then we get
\[
U^{n+1}\sum_{i\in S_z} z_i = U^{n+1}f(b_{n+1}) = \partial f(a_{n+1}) = \sum_{i\in S_y} U^{n_i+c_i}z_i,
\]
which implies that $S_y = S_z$ and $n_i + c_i = n+1$ for all $i \in S_y$. But this means $c_i = n+1-n_i \ge 1$ for all $i$, so we deduce that $f(a_{n+1})=z \pmod U$. Since $\iota(x_{n+1})=x_{n+1}$, we have
\[
0 = (\iota f + f \iota)(a_{n+1}) = f(a_{n+1})+\iota(f(a_{n+1}))+f(x_{n+1}) = f(x_{n+1})+z+\iota(z).
\]
Furthermore, since $U^{-1}f$ is a homotopy equivalence, the homology class of $f(x_{n+1})$ generates $U^{-1}H_{\ast}(C)$. Hence $x$ should appear when we write $\iota(z)$ as an $\mathbb{F}_2$-linear combination of the given basis elements. 

Recall that the involution $\iota$ on $\widehat{CFK}(S^3,K)$ is a truncation of an involution on $\widehat{CFK}(S^3,K)$ which is a homotopy skew-equivalence. Assume without loss of generality that the differential on $CFK_{\mathcal{R}}(S^3,K)$ is reduced, and write
\[
\partial = \partial_U + \partial_V
\]
where $\partial_U$ and $\partial_V$ are the sums of terms in $\partial$ involving $U$-powers and $V$-powers, respectively. Since $z$ is a $\partial_U$-cycle whose homology class is $U$-torsion, $\iota(z)$ should be the truncation from $CFK_{\mathcal{R}}(S^3,K)$ of a $\partial_V$-cycle whose homology class is $V$-torsion. In particular, $\iota(z)$ cannot contain $x$, as $x$ comes from a cycle whose homology class is not annihilated by any powers of $U$ and $V$, a contradiction. So we conclude that there does not exist an almost $\iota_K$-local map from $C_{n+1}$ to $CFK^-(S^3,K)$.

By \Cref{thm:comparison}, we are now left with two possibilities: $\mathfrak{A}(K) < [C_{n+1}]$ or $\mathfrak{A}(K)+[C_E] = [C_{n+1}]$. However, $CFK^-(S^3,K)\otimes E \simeq CFK^-(S^3,K\# E)$ also satisfies the assumptions of the lemma, so there is no almost $\iota_K$-local map from $C_{n+1}$ to $CFK^-(S^3,K)\otimes C_E$. Therefore we conclude that $\mathfrak{A}(K) < [C_{n+1}]$.
\end{proof}

We are now ready to prove \Cref{thm:mainthm-indep}, whose statement we recall.
\begin{thm}
\label{thm:mainthm-indep-body}
If $K$ is torsion in $\mathcal{C}$ with $\mathfrak{A}(K) = 1$, then the set of cables $\{K_{2n+1,1}\}_{n>0}$ contains an infinite subset which is linearly independent in $\mathcal{C}$.
\end{thm}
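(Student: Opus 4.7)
The plan is to show that the classes $\bar x_n := [\mathfrak{A}(K_{2n+1,-1})]$ in the totally ordered quotient $\mathfrak{I}^U_K / \langle C_E \rangle$ (given by \Cref{thm:comparison}) admit quantitative lower and upper bounds that force an infinite subfamily to be Archimedean-independent, and hence linearly independent. Specifically, I would refine the arguments of \Cref{thm:odd1hasinfiniteorder} and \Cref{lem:upperbound} to establish bounds of the form $[C_n] \le \bar x_n < [C_{M(n)+1}]$, where $M(n)$ grows at most linearly in $n$. As a reduction step, note that linear independence of $\{K_{2n+1,1}\}$ in $\mathcal{C}$ is equivalent to linear independence of $\{K_{2n+1,-1}\}$, so working with $\bar x_n$ is harmless.

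For the upper bound I would apply \Cref{lem:upperbound} with $M(n)$ equal to the minimal integer such that $U^{M(n)}$ annihilates the torsion submodule of $HFK^-(K_{2n+1,-1})$; a standard computation via the cabling formula for bordered knot Floer homology (together with the fact that $CFK^-(S^3,K)$ has a tower summand by \Cref{lem:torsionsplitting}) shows that $M(n)$ is bounded linearly in $n$. For the lower bound $[C_n] \le \bar x_n$, I would follow the proof strategy of \Cref{thm:odd1hasinfiniteorder}, but applied to a generator pair with $U^n$-torsion instead of $U^1$-torsion. Concretely, from the description of $CFK_{UV}(S^3, E_{2n+1,-1})$ and the action of $\iota$ given in \cite[Proposition 3.1]{hom2020linear}, one extracts a pair $(a^{(n)}, b^{(n)})$ with $\partial a^{(n)} = U^n b^{(n)}$, together with a cycle $x^{(n)}$ generating the $U$-tower and elements $c^{(n)}, d^{(n)}$ in the $\iota$-orbit, so that the obvious assignment defines an almost $\iota_K$-local map $C_n \to CFK^-(E_{2n+1,-1})$. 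Transferring this via the type-$D$ morphism $f_{2n+1,-1}$ as in the proof of \Cref{thm:odd1hasinfiniteorder}, together with a strengthening of \Cref{lem:cabledmap} that controls the image of the higher $U^n$-torsion element $b^{(n)}$, yields the corresponding lower bound for $K_{2n+1,-1}$.

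With these bounds in hand, I would choose an infinite subsequence $n_1 < n_2 < \cdots$ such that $n_{i+1} \gg M(n_i)$. In the totally ordered quotient this yields a chain $\bar x_{n_1} < \bar x_{n_2} < \cdots$ with the Archimedean property $k \cdot \bar x_{n_i} < \bar x_{n_{i+1}}$ for each fixed $k \ge 1$ and all sufficiently large $i$. Verifying this property rigorously requires an explicit analysis of tensor powers $C_n^{\otimes k}$ in $\mathfrak{I}^U_K$, showing that the class of $C_n^{\otimes k}$ in the quotient is bounded above by $[C_{n + c(k)}]$ for some constant $c(k)$ depending only on $k$. A standard leading-term argument in a totally ordered abelian group then shows that no nontrivial $\mathbb{Z}$-linear relation among the $\bar x_{n_i}$ can hold, and via the homomorphism property of $\mathfrak{A}$ noted in \Cref{rem:homomorphism} this yields the desired linear independence of $\{K_{2n_i+1,1}\}$ in $\mathcal{C}$.

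The main obstacle will be the sharpening of \Cref{lem:cabledmap}: proving that the image of the $U^n$-torsion generator $b^{(n)}$ under $f_{2n+1,-1}$ does not collapse into a sublevel of strictly smaller $U$-torsion in $\widehat{CFD}(S^3 \smallsetminus K_{2n+1,-1})$. The existing proof of \Cref{lem:cabledmap} already involves a delicate case analysis of stable chains; its extension to higher $U^n$-torsion requires tracking additional configurations of the $\mathcal{A}(T^2)$-action on $\widehat{CFD}(S^3 \smallsetminus K)$ and ruling out new cancellation possibilities along longer stable chains. Once this technical step is carried out, the rest of the argument is a formal consequence of the total ordering established in \Cref{thm:comparison}.
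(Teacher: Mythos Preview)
Your overall framework---lower and upper bounds in the totally ordered quotient $\mathfrak{I}^U_K/\langle C_E\rangle$, a recursively chosen subsequence, and an Archimedean-type argument---is exactly the paper's strategy. However, you have overcomplicated both bounds, and the ``main obstacle'' you flag is not an obstacle at all.

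For the lower bound $[C_n]\le\mathfrak{A}(K_{2n+1,-1})$, no strengthening of \Cref{lem:cabledmap} is needed. The very same element $b$ used in \Cref{thm:odd1hasinfiniteorder} already satisfies $\partial b=U^{n}c$ in $CFK^-(S^3,E_{2n+1,-1})$ (set $V=0$ in $\partial b=U^{n}c+UVd+V^{n}e$). Since $\mathminus(f_{2n+1,-1})$ is an $\mathbb{F}_2[U]$-linear chain map, $\partial\bigl(\mathminus(f_{2n+1,-1})(b)\bigr)$ is automatically divisible by $U^{n}$; there is no ``collapse into smaller $U$-torsion'' to rule out. Combined with the two $\iota$-equivariance identities already established in the proof of \Cref{thm:odd1hasinfiniteorder}, one writes down the almost $\iota_K$-local map $C_n\to CFK^-(S^3,K_{2n+1,-1})$ in one line. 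The paper does exactly this.

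For the upper bound, the paper bypasses both your linear control on $M(n)$ and the analysis of $C_n^{\otimes k}$. The point is that if $U^{N}$ annihilates the torsion of $HFK^-(S^3,K_{2n+1,-1})$, then by K\"unneth it also annihilates the torsion of $HFK^-(S^3,C\cdot K_{2n+1,-1})$ for every integer $C$; hence \Cref{lem:upperbound} applied to the \emph{knot} $C\cdot K_{2n+1,-1}$ gives $C\,\mathfrak{A}(K_{2n+1,-1})<[C_{N+1}]$ uniformly in $C$. The recursion is then simply $n_{i+1}=N_i+1$ with $N_i\ge n_i$ any annihilating exponent, and no growth rate is needed. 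Your route through $[C_n^{\otimes k}]\le[C_{n+c(k)}]$ is not only unnecessary but potentially delicate: the proof of \Cref{lem:upperbound} uses that the involution lifts to a skew-equivalence on $CFK_{\mathcal{R}}$ of an actual knot, which is not part of the data of an abstract horizontal almost $\iota_K$-complex like $C_n^{\otimes k}$.
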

\begin{proof}As in the proof of \Cref{thm:odd1hasinfiniteorder}, it is enough to show that $\{K_{2n+1,-1}\}_{n>0}$ contains an infinite subset which is linearly independent in $\mathcal{C}$. Also, recall that, in the proof of \Cref{thm:odd1hasinfiniteorder}, we constructed a local chain map 
\[
\mathminus(f_{2n+1,-1})\colon CFK^-(S^3,E_{2n+1,-1})\rightarrow CFK^-(S^3,K_{2n+1,-1})
\]
for each $n>1$, such that its hat-flavored truncation $\mathhat(f_{2n+1,-1})$ satisfies
\[
\begin{split}
    \iota_{K_{2n+1,-1}}\circ\mathhat(f_{2n+1,-1})(b) &= \mathhat(f_{2n+1,-1})(b+a), \\
    \iota_{K_{2n+1,-1}}\circ\mathhat(f_{2n+1,-1})(a) &= \mathhat(f_{2n+1,-1})(a).
\end{split}
\]
Since $\partial b$ is a multiple of $U^{n+1}$, we see that $\partial(\mathminus(f_{2n+1,-1})(b))$ is also a multiple of $U^{n+1}$. Therefore, we can write 
\[
\partial(\mathminus(f_{2n+1,-1})(b)) = U^{n+1} c.
\]
Recall that $C_n$ is a horizontal almost $\iota_K$-chain complex generated by $a_n,b_n,c_n,d_n,x_n$. Assume that $n>2$, and define a chain map $g\colon C_n \rightarrow CFK^-(S^3,K_{2n+1,-1})$ as
\[
g(a_{n-1})=\mathminus(f_{2n+1,-1})(b),\,g(b_{n-1})= Uc,\,g(c_{n-1})=g(d_{n-1})=0,\, g(x_{n-1})=\mathminus(f_{2n+1,-1})(a).
\]
Then $\iota g + g\iota=0$ as a map from $\widehat{C}_n$ to $\mathminus(K_{2n+1,-1})$, and $g$ is obviously local. Thus $g$ is an almost $\iota_K$-local map, which implies $[C_n] \le \mathfrak{A}(K_{2n+1,-1})$.

Now we recursively construct a sequence $n_1<n_2<\cdots$ of integers as follows. We start with the initial value $n_1=2$. Assuming that we have already determined $n_1,\cdots,n_i$, we take $N_i$ to be an integer such that $N_i \ge n_i$ and $U^{N_i}$ annihilates the torsion submodule $HFK_{tor}(S^3,K_{2n_i+1,-1})$ of $HFK^-(S^3,K_{2n_i+1,-1})$. By \Cref{lem:torsionsplitting} and \Cref{lem:upperbound}, we have $C\mathfrak{A}(K_{2n_i+1,-1}) < [C_{N_i+1}]$ for any integer $C$; this is because $HFK_{tor}(S^3,CK_{2n_i+1,-1})$ is also annihilated by $U^{N_i}$. Then we take $n_{i+1}=N_i+1$ and continue the recursion.

Suppose that the family $\{K_{2n_i+1,-1}\}_{i>0}$ is not linearly independent in $\mathcal{C}$. Then there exist integers $0<i_1<\cdots<i_k$ and $C_1,\ldots,C_k$ such that $C_k > 0$ and
\[
C_k \mathfrak{A}(K_{2n_k+1,-1}) = \sum_{j=1}^{k-1} C_j \mathfrak{A}(K_{2n_{i_j}+1,-1}).
\]
However, since $[C_{n_i}] \le \mathfrak{A}(K_{2n_i+1,-1})$ and $C\mathfrak{A}(K_{2n_i+1,-1}) < [C_{n_{i+1}}]$ for any integer $C$, we should have 
\[
\begin{split}
    \sum_{j=1}^{k-1} C_j \mathfrak{A}(K_{2n_{i_j}+1,-1}) &= C_1 \mathfrak{A}(K_{2n_{i_1}+1,-1}) + \sum_{j=2}^{k-1} C_j \mathfrak{A}(K_{2n_{i_j}+1,-1}) \\
    &< (C_2 +1) \mathfrak{A}(K_{2n_{i_2}+1,-1}) + \sum_{j=3}^{k-1} C_j \mathfrak{A}(K_{2n_{i_j}+1,-1}) \\
    &< (C_{k-1}+1) \mathfrak{A}(K_{2n_{i_{k-1}}+1,-1}) \\
    &< [C_{n_{i_{k-1}+1}}] \le [C_{n_{i_k}}] \le \mathfrak{A}(K_{2n_{i_k}+1,-1}) \le C_k \mathfrak{A}(K_{2n_{i_k}+1,-1}),
\end{split}
\]
a contradiction. Therefore the given family is linearly independent.
\end{proof}

\section{Generalization to iterated cables}
\label{sec:iteratedcable}

Recall from \Cref{lem:almostbordered} that a type-$D$ morphism $f\colon\widehat{CFD}(S^3 \smallsetminus K_1)\rightarrow \widehat{CFD}(S^3 \smallsetminus K_2)$ is \emph{almost bordered-$\iota$-local} if and only if $\mathminus(f)$ is an almost $\iota_K$-local map. The arguments used in the previous section to prove that $K_{2n+1,-1}$ has infinite order in $\mathcal{C}$ can be rephrased as follows. In particular, we do not require $K$ is torsion in $\mathcal{C}$; having a type-$D$ morphism $f$ satisfying the given conditions is enough.

\begin{prop}
\label{prop:proofrephrase}
Let $K$ be a knot. If $\widehat{CFD}(S^3 \smallsetminus K)$ admits a direct summand isomorphic to $\widehat{CFD}(S^3 \smallsetminus O)$ and there exists an almost bordered-$\iota$-local map from $\widehat{CFD}(S^3 \smallsetminus E)$ to $\widehat{CFD}(S^3 \smallsetminus K)$, then for any integer $n>0$, we have $\mathfrak{A}(K_{2n+1,-1}) \ge C_O$ and $\mathfrak{A}(K_{2n+1,-1}) \ge C_E$ in $\mathfrak{I}^U_K$, from which it follows that $\mathfrak{A}(K_{2n+1,-1}) > C_O$, and that $K_{2n+1,-1}$ has infinite order in $\mathcal{C}$.
\end{prop}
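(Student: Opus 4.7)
The plan is to mimic the proof of \Cref{thm:odd1hasinfiniteorder}, with the two hypotheses of the proposition playing exactly the roles that \Cref{lem:torsionsplitting} (the direct-summand input) and \Cref{lem:borderedinterpret} (the existence of an almost bordered-$\iota$-local map from the figure-eight complement) played there. Concretely, I would start with the given almost bordered-$\iota$-local map $f\colon \widehat{CFD}(S^3\smallsetminus E) \to \widehat{CFD}(S^3\smallsetminus K)$ and box-tensor it with $\widehat{CFDA}(T_\infty \smallsetminus C_{2n+1})$ to obtain
\[
f_{2n+1,-1}\colon \widehat{CFD}(S^3\smallsetminus E_{2n+1,-1}) \to \widehat{CFD}(S^3\smallsetminus K_{2n+1,-1}).
\]
Using the gluing formula for bordered involutions together with \Cref{lem:conjugatemorphism}, the ``commutator defect'' of $f_{2n+1,-1}$ with bordered involutions is identified with the cabling of that of $f$, and is therefore nullhomotopic on the hat-flavored side.

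Next, feeding the direct-summand hypothesis into \Cref{lem:cabledmap} (which uses only the existence of a $\widehat{CFD}(S^3\smallsetminus O)$ summand, not the torsion of $K$) and using \Cref{lem:elementb} to write $b$ mod $U$ in terms of the generators $a_i \otimes h_1$, I would deduce
\[
\iota_{K_{2n+1,-1}}\,\mathhat(f_{2n+1,-1})(b) \sim \mathhat(f_{2n+1,-1})(b+a).
\]
Since $\partial b$ is a multiple of $U^{n+1}$ with $n+1\ge 2$, this gives $\Phi \circ \mathhat(f_{2n+1,-1})(b) = 0$, and then the relation $\iota^2 \sim 1+\Phi\iota\Phi\iota$ forces $\iota_{K_{2n+1,-1}}$ to fix $\mathhat(f_{2n+1,-1})(a)$ up to homotopy. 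Because $a$ is a cycle whose class generates $U^{-1}H_*(CFK^-(S^3,E_{2n+1,-1}))$ and $\mathminus(f_{2n+1,-1})$ is local, I would then extract two almost $\iota_K$-local maps into $CFK^-(S^3,K_{2n+1,-1})$: the $\mathbb{F}_2[U]$-linear map $C_O \to CFK^-(S^3,K_{2n+1,-1})$ sending $1$ to $\mathminus(f_{2n+1,-1})(a)$, which yields $\mathfrak{A}(K_{2n+1,-1}) \ge C_O$, and the map from $C_E$ produced by \Cref{lem:figeightmap} with $a' = \mathminus(f_{2n+1,-1})(b)$ and $x' = \mathminus(f_{2n+1,-1})(a)$, which yields $\mathfrak{A}(K_{2n+1,-1}) \ge C_E$.

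To conclude, the incomparability of $C_O$ and $C_E$ from \Cref{exmp:figureeight} forces $\mathfrak{A}(K_{2n+1,-1}) \ne C_O$ (otherwise $C_E \le C_O$), giving the strict inequality $\mathfrak{A}(K_{2n+1,-1}) > C_O$. If $K_{2n+1,-1}$ were torsion, \Cref{cor:pseudoArf} would force $\mathfrak{A}(K_{2n+1,-1}) \in \{C_O,C_E\}$; the first case has just been eliminated, and the second would require $C_O \le C_E$, again contradicting incomparability. The main technical friction I expect is bookkeeping the cabling step at the level of bordered involutions, that is, verifying that the almost bordered-$\iota$-local property passes through the box tensor product with $\widehat{CFDA}(T_\infty \smallsetminus C_{2n+1})$ in the precise way that is needed; this is essentially what is invoked in the corresponding step of \Cref{thm:odd1hasinfiniteorder}, while the remainder of the argument is purely algebraic manipulation inside $\mathfrak{I}^U_K$.
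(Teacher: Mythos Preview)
Your proposal is correct and follows essentially the same approach as the paper: the proposition is explicitly introduced there as a rephrasing of the proof of \Cref{thm:odd1hasinfiniteorder}, with the two hypotheses replacing the inputs from \Cref{lem:torsionsplitting} and \Cref{lem:borderedinterpret}, exactly as you describe. Your endgame (ruling out $\mathfrak{A}(K_{2n+1,-1})\in\{C_O,C_E\}$ via \Cref{cor:pseudoArf} and incomparability) is a minor variant of the paper's appeal to \Cref{cor:infiniteorder}, but the substance is identical.
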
 


Recall that $CFK_{UV}(S^3,E_{2n+1,-1})$ admits a direct summand $C$, generated by elements $a,b,c,d,e,f,g$, where the differential acts as
\[
\partial b= U^n c+UVd+V^n e,\,\partial c=Vf,\,\partial e=Ug,\,\partial a=\partial d=\partial f=\partial g=0.
\]
Furthermore, we have $\iota_{E_{2n+1,-1}}(b)=b+a$ or $b+a+U^{n-1}f$; so we may assume that $\iota_{E_{2n+1,-1}}(b)=b+a$ possibly after a basis change given by $a\mapsto a+a+U^{n-1}f$. Then we can adjust $\iota_{E_{2n+1,-1}}$ by a homotopy to get
\[
\iota_{E_{2n+1,-1}}(a)=\iota_{E_{2n+1,-1}}(b+\iota_{E_{2n+1,-1}}(b))=b+a+\iota^2_{E_{2n+1,-1}}(b) = a+\Phi\Psi(b)=a+U^{n-1}f.
\]

We claim that the summand $C$ can be made $\iota_{E_{2n+1,-1}}$-stable.
\begin{lem}
\label{lem:cablecomputation}
There exists a chain map, homotopic to $\iota_{E_{2n+1,-1}}$, such that its action on the basis elements $a,b,c,d,e,f,g$ is described as
\[
a\mapsto a+U^{n-1}f,\,b\mapsto b+a,\,c\mapsto e,\,e\mapsto c,\,f\mapsto g,\,g\mapsto f,\,d\mapsto d.
\]
\end{lem}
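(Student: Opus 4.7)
The plan is to first verify that the prescribed formula defines a chain skew-map $\iota_0$ on the summand $C$, and then to show that $\iota_0$ is $\mathbb{F}_2[U,V]$-skew-linearly homotopic to the given involution $\iota_{E_{2n+1,-1}}$.

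For the first step, I would extend the assignments from the lemma $\mathbb{F}_2[U,V]$-skew-linearly to a map $\iota_0 \colon C \to C$, so that $\iota_0(Ux) = V\iota_0(x)$ and $\iota_0(Vx) = U\iota_0(x)$. The chain skew-map identity $\partial \iota_0 = \iota_0 \partial$ is verified one generator at a time; the nontrivial check is
\[
\partial \iota_0(b) = \partial(b+a) = U^n c + UVd + V^n e = V^n \iota_0(c) + UV\iota_0(d) + U^n \iota_0(e) = \iota_0(\partial b),
\]
and the remaining checks for $a, c, d, e, f, g$ are similar one-line calculations that exploit the swaps $c \leftrightarrow e$ and $f \leftrightarrow g$ together with $\iota_0(d) = d$ and $\iota_0(a) = a + U^{n-1}f$.

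To conclude, I would show that the difference $\eta := \iota_{E_{2n+1,-1}} + \iota_0$ is null-homotopic as an $\mathbb{F}_2[U,V]$-skew-linear chain map via a homotopy $H$ satisfying $H(a) = H(b) = 0$. Since the discussion immediately preceding the lemma arranges $\iota_{E_{2n+1,-1}}(b) = b+a$ and $\iota_{E_{2n+1,-1}}(a) = a + U^{n-1}f$, both of which coincide with $\iota_0$ on $a, b$, we have $\eta(a) = \eta(b) = 0$. The chain skew-map relations give
\[
V^n \eta(c) + UV \eta(d) + U^n \eta(e) = \eta(\partial b) = \partial \eta(b) = 0, \quad \partial \eta(c) = U\eta(f), \quad \partial \eta(e) = V\eta(g),
\]
together with $\partial \eta(d) = \partial\eta(f) = \partial\eta(g) = 0$. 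Using the skew-grading $(A,M) \mapsto (-A,M)$ and the bigradings of the generators of $C$, one can enumerate the possible forms of $\eta$ on $c, d, e, f, g$, absorb the parts of $\eta(c), \eta(d), \eta(e)$ that are boundaries into an $\mathbb{F}_2[U,V]$-skew-linear homotopy $H$ supported away from $a, b$, and then let the corrections on $f, g$ be determined from $\partial \eta(c) = U\eta(f)$ and $\partial \eta(e) = V\eta(g)$ using the fact that $U$ and $V$ act as non-zero divisors on the cycle subspace of $C$.

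The main obstacle will be ensuring the simultaneous compatibility of these adjustments: the corrections to $\eta(c)$ and $\eta(e)$ force corresponding corrections on $\eta(f)$ and $\eta(g)$ through the relations above, and these in turn must be consistent with the identity $V^n \eta(c) + UV\eta(d) + U^n \eta(e) = 0$ as well as with the constraint that $H$ vanish on $a, b$. Once the homotopy $H$ is constructed, modifying $\iota_{E_{2n+1,-1}}$ by $H$ yields $\iota_{E_{2n+1,-1}} + \partial H + H\partial = \iota_0$, completing the proof.
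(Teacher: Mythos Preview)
Your approach is correct in principle but takes a more roundabout route than the paper. The paper does not introduce a separate $\iota_0$ and then build a null-homotopy of $\eta = \iota_{E_{2n+1,-1}} + \iota_0$. Instead, having already arranged $\iota_{E_{2n+1,-1}}(a) = a + U^{n-1}f$ and $\iota_{E_{2n+1,-1}}(b) = b+a$ by the homotopy preceding the lemma, the paper argues directly that the chain-map identity and the skew-grading force $\iota_{E_{2n+1,-1}}$ to take the prescribed values on $c,d,e,f,g$ \emph{exactly}, with no further homotopy needed. For example, applying $\iota_{E_{2n+1,-1}}\partial = \partial\iota_{E_{2n+1,-1}}$ to $b$ shows that $c + \iota_{E_{2n+1,-1}}(e)$ is divisible by $V$ and $e + \iota_{E_{2n+1,-1}}(c)$ is divisible by $U$; a bigrading check on the full complex $CFK_{UV}(S^3,E_{2n+1,-1})$ then forces both to vanish. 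The values on $f,g$ follow immediately from $\partial c = Vf$, $\partial e = Ug$, and for $d$ one observes that $d + \iota_{E_{2n+1,-1}}(d)$ is a cycle, lists the other generators $d_{0,i},d_{1,i}$ sharing the bidegree of $d$, and checks that no nontrivial $\mathbb{F}_2$-combination of them is a cycle.

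In your language this amounts to proving $\eta \equiv 0$ on $C$, so your ``main obstacle'' dissolves and $H = 0$ suffices. Your enumeration step would recover exactly this once you use the bigradings of \emph{all} generators of $CFK_{UV}(S^3,E_{2n+1,-1})$, not just those of $C$; note that a priori $\iota_{E_{2n+1,-1}}(c)$ could have components outside $C$, and it is the grading that excludes them. The paper's route buys you a shorter argument with no homotopy bookkeeping beyond what was already done for $a$ and $b$; your route is more systematic but does extra work that the grading constraints make unnecessary.
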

\begin{proof}
We have already achieved $a\mapsto a+U^{n-1}f$ and $b\mapsto b+a$, so it remains to achieve the rest. Since $\iota_{E_{2n+1,-1}}$ is a chain map, we have
\[
U^n c+V^n e=\partial b=\partial\iota_{E_{2n+1,-1}}(b)= \iota_{E_{2n+1,-1}}\partial(b)=V^n \iota_{E_{2n+1,-1}}(c)+U^n \iota_{E_{2n+1,-1}}(e).
\]
Hence $c+\iota_{E_{2n+1,-1}}(e)$ is a multiple of $V$ and $e+\iota_{E_{2n+1,-1}}(c)$ is a multiple of $U$. However, by considering the bidegrees of $c$ and $C_E$, it is easy to see that the only possibility is $c+\iota_{E_{2n+1,-1}}(e)=e+\iota_{E_{2n+1,-1}}(c)=0$. Therefore we get $\iota_{E_{2n+1,-1}}(c)=e$ and $\iota_{E_{2n+1,-1}}(e)=c$.

We then proceed to remaining basis elements. Since $\partial c=Vf$ and $\partial e=Ug$, we have 
\[
\begin{split}
    U\iota_{E_{2n+1,-1}}(f) & =\iota_{E_{2n+1,-1}}\partial c=\partial \iota_{E_{2n+1,-1}}(c)=\partial e=Ug, \\
    V\iota_{E_{2n+1,-1}}(g) & =\iota_{E_{2n+1,-1}}\partial e=\partial \iota_{E_{2n+1,-1}}(e)=\partial c=Vf,
\end{split}
\]
which implies $\iota_{E_{2n+1,-1}}(f)=g$ and $\iota_{E_{2n+1,-1}}(g)=f$. Furthermore, since $\partial d=U^{n-1}f + V^{n-1}g$, we have 
\[
\partial (d+\iota_{E_{2n+1,-1}}(d))= \partial d+\partial \iota_{E_{2n+1,-1}}(d)=(1+\iota_{E_{2n+1,-1}})(U^{n-1}f+V^{n-1}g)=0,
\]
so $\iota_{E_{2n+1,-1}}(d)$ differs from $d$ by a cycle. Since $d$ has bidegree $(1,1)$ and every other basis elements of the complex $CFK_{UV}(S^3,E_{2n+1,-1})$ lies in a bidegree in which at least one of the two degrees is nonpositive, we should have
\[
\iota_{E_{2n+1,-1}}(d)=d+ \sum (\alpha_i d_{0,i} + \beta_i d_{1,i})
\]
for some $\alpha_i,\beta_i\in \{0,1\}$. However, it is clear that $\sum (\alpha_i d_{0,i} + \beta_i d_{1,i})$ is cycle if and only if all $\alpha_i$ and $\beta_i$ vanish. Therefore we get $\sum (\alpha_i d_{0,i} + \beta_i d_{1,i})(d)=d$.
\end{proof}

Using \Cref{lem:cablecomputation}, we can construct a bordered-$\iota$-local map from the figure-eight knot complement to $S^3 \smallsetminus E_{2n+1,-1}$ as follows.

\begin{lem}
\label{lem:cable-fig8}
For any postive integer $n$, there exists an almost bordered-$\iota$-local map $f$ from $\widehat{CFD}(S^3 \smallsetminus E)$ to $\widehat{CFD}(S^3 \smallsetminus E_{2n+1,-1})$ such that the image of $\mathhat(f)$ is contained in the subcomplex generated by $a,b,c,d,e,f,g$.
\end{lem}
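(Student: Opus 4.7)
My plan is to first construct an almost $\iota_K$-local map $\varphi\colon C_E \to CFK^-(S^3,E_{2n+1,-1})$ whose image lies in the central summand, and then lift this to a type-$D$ morphism via \Cref{lem:almostbordered}. By \Cref{lem:cablecomputation}, I fix a representative of $\iota_{E_{2n+1,-1}}$ whose action on the central summand $C$ generated by $a,b,c,d,e,f,g$ satisfies $\iota(b) \equiv b+a \pmod{(U,V)}$; moreover, the cycle $a$ generates $U^{-1}H_\ast(CFK^-(S^3,E_{2n+1,-1}))$. This verifies the hypotheses of \Cref{lem:figeightmap} with $a'=b$ and $x'=a$, producing the desired map $\varphi$.

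Running the explicit construction in the proof of \Cref{lem:figeightmap}, the identity $\partial b = U^n c$ in $CFK^-$ forces $b' = U^{n-1}c$; choosing $c' = U^{n-1}e \in C$ as a lift of $\iota(b') = U^{n-1}\iota(c) = U^{n-1}e$ in $\widehat{C}$ and solving $\partial c' = U d'$ gives $d' = U^{n-1}g$. Therefore $\varphi$ sends
\[
a_{C_E} \mapsto b,\quad b_{C_E} \mapsto U^{n-1}c,\quad c_{C_E} \mapsto U^{n-1}e,\quad d_{C_E} \mapsto U^{n-1}g,\quad x_{C_E} \mapsto a,
\]
so its image lies in the $\mathbb{F}_2[U]$-span of $\{a,b,c,d,e,f,g\}$. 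Using the standard computation identifying $CFK^-(S^3,E)$ with $C_E$, I regard $\varphi$ as a chain map $CFK^-(S^3,E) \to CFK^-(S^3,E_{2n+1,-1})$.

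To lift $\varphi$ to a type-$D$ morphism $f\colon \widehat{CFD}(S^3 \smallsetminus E) \to \widehat{CFD}(S^3 \smallsetminus E_{2n+1,-1})$, I use the bordered description of $\widehat{CFD}(S^3 \smallsetminus E)$ from \Cref{sec:horfrombordered} together with the pairing formula $CFK^- \simeq CFA^-(T_\infty,\nu) \boxtimes \widehat{CFD}$. Since $\widehat{CFA}(T_\infty,\nu)$ has a single $\iota_0$-generator and no nontrivial $A_\infty$ operations, the $\mathbb{F}_2[U]$-linear chain map $\varphi$ determines $f$ on generators up to homotopy by reverse-engineering the box-tensor product. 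By \Cref{lem:almostbordered}, almost $\iota_K$-locality of $\mathminus(f) \sim \varphi$ translates into almost bordered-$\iota$-locality of $f$, and $\mathhat(f)$ inherits the image constraint from $\varphi$, landing in the subcomplex generated by $a,b,c,d,e,f,g$.

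The hardest part will be executing the lift in the final step, which requires verifying all type-$D$ compatibility relations on the nine explicit generators of $\widehat{CFD}(S^3\smallsetminus E)$ against the cabling DA-module structure, and carefully tracking $U$-torsion corrections in the edge case $n=1$ where the representative satisfies $\iota_{E_{2n+1,-1}}(a) = a + f$ in $\widehat{C}$ (rather than $\iota(a) = a$ for $n \geq 2$), making the $\iota_K$-equivariance check between $\mathhat(f)$ and $\iota$ more delicate.
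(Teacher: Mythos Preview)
Your construction of the almost $\iota_K$-local map $\varphi$ on $CFK^-$ via \Cref{lem:figeightmap} is correct and is a pleasant shortcut. The gap is in the lifting step: the assignment $\varphi\mapsto f$ you describe does not exist in general, because the functor $\mathminus(-)=CFA^-(T_\infty,\nu)\boxtimes(-)$ is not full. The type-$D$ module $\widehat{CFD}(S^3\smallsetminus K)$ records the entire $\mathcal{R}$-coefficient complex $CFK_{\mathcal{R}}$, not just its $V=0$ truncation $CFK^-$; correspondingly (via \cite[Theorem~11.26 and Proposition~11.38]{lipshitz2018bordered}) type-$D$ morphisms correspond to $\mathcal{R}$-linear chain maps, and an $\mathbb{F}_2[U]$-linear map that does not extend over $\mathcal{R}$ cannot be realised as $\mathminus(f)$.

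Your particular $\varphi$ fails this test. Identifying $C_E$ with $CFK^-(S^3,E)$ via $a_{C_E}\leftrightarrow h$, $b_{C_E}\leftrightarrow s$, $c_{C_E}\leftrightarrow t$, $d_{C_E}\leftrightarrow z$, $x_{C_E}\leftrightarrow x$, your map sends $h\mapsto b$, $s\mapsto U^{n-1}c$, $t\mapsto U^{n-1}e$. In $CFK_{\mathcal{R}}(S^3,E)$ one has $\partial h=Us+Vt$, so an $\mathcal{R}$-linear extension would need $\partial\varphi(h)=U\varphi(s)+V\varphi(t)=U^n c+VU^{n-1}e$; but $\partial b=U^n c+V^n e$ in $CFK_{\mathcal{R}}(S^3,E_{2n+1,-1})$, and for $n>1$ we have $VU^{n-1}e=0$ in $\mathcal{R}$ while $V^n e\neq 0$. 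Thus $\varphi$ is not a chain map over $\mathcal{R}$ and no type-$D$ morphism induces it.

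The paper avoids this by working over $\mathbb{F}_2[U,V]$ from the outset: it writes down an explicit $\iota_K$-local map $f_n\colon CFK_{UV}(S^3,E)\to CFK_{UV}(S^3,E_{2n+1,-1})$ with $f_n(h)=b$, $f_n(x)=a$, $f_n(s)=U^{n}c+Vd$, $f_n(t)=V^{n}e$, $f_n(z)=V^{n}d$, checks by hand that $\iota f_n+f_n\iota=\partial H+H\partial$ for an explicit $H$, and then invokes the LOT correspondence to produce the required type-$D$ morphism. The cost is that one cannot simply quote \Cref{lem:figeightmap}; the two-variable map has to be guessed and verified directly.
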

\begin{proof}
It follows from the proof of \cite[Proposition 11.38]{lipshitz2018bordered} that for any two knots $K_1,K_2$ and any chain map $g:CFK_{UV}(S^3,K_1)\rightarrow CFK_{UV}(S^3,K_2)$, there exists a type-D morphism $G:\widehat{CF}(S^3 \smallsetminus K_1)\rightarrow \widehat{CFD}(S^3 \smallsetminus K_2)$ such that $\mathhat(G)\sim \widehat{g}$. Hence the existence of the desired almost bordered-$\iota$-local map $F$ can be shown by proving that there exists an $\iota_K$-local map $f_n$ from $CFK_{UV}(S^3,E)$ to $CFK_{UV}(S^3,E_{2n+1,-1})$, whose image is contained in the subcomplex generated by $a,b,c,d,e,f,g$.

Recall that the knot Floer chain complex for the figure-eight knot $E$ is generated by five elements, $x,h,s,t,z$, where the differential acts by 
\[
\partial h = Us+Vt,\,\partial s = Vz,\,\partial t=Uz,\,\partial z=\partial x=0,
\]
and the involution acts by 
\[
\iota_K(h)=h+x,\,\iota_K(x)=x+z,\,\iota_K(s)=t,\,\iota_K(t)=s,\,\iota_K(z)=z.
\]
For any $n>0$, consider the map $f_n\colon CFK_{UV}(S^3,E)\rightarrow CFK_{UV}(S^3,E_{2n+1,-1})$ defined as
\[
f_n(x)=a,\,f_n(h)=b,\,f_n(s)=U^{n}c+Vd,\,f_n(t)=V^{n}e,\,f_n(z)=V^{n}d.
\]
It is straightforward to check that $\partial f_n + f_n \partial=0$, so that $f_n$ is a local map. Furthermore, by \Cref{lem:cablecomputation}, the map $\iota_{E_{2n+1,-1}} f_n+f_n\iota_{E}$ is given by 
\[
h\mapsto 0,\,s\mapsto Ud,\,t\mapsto Vd,\,z,x\mapsto U^{n}f+V^{n}g.
\]
Consider the linear map $H\colon CFK_{UV}(S^3,E)\rightarrow CFK_{UV}(S^3,E_{2n+1,-1})$ defined by 
\[
H(h)=H(s)=H(t)=0,\,H(z)=H(x)=d.
\]
Then we get $\iota_{E_{2n+1,-1}} f_n+f_n\iota_{E} = \partial H+H\partial$, so we deduce that $f_n$ is an $\iota_K$-local map. Furthermore, the image of $f_n$ consists of $\mathbb{F}_2[U]$-linear combinations of $a,b,c,d,e,f,g$. Therefore $f_n$ satisfies the desired properties.
\end{proof}

Using the almost bordered-$\iota$-local map we got from \Cref{lem:cable-fig8}, one can now prove the following lemma, which will be used as an inductive step in the proof of \Cref{thm:mainthm}.

\begin{lem}
\label{lem:bootstrap}
Let $K$ be a knot and $n$ be a positive integer. If there exists an almost bordered-$\iota$-local map $f\colon\widehat{CFD}(S^3 \smallsetminus E)\rightarrow \widehat{CFD}(S^3 \smallsetminus K)$, then there also exists an almost bordered-$\iota$-local map from $\widehat{CFD}(S^3 \smallsetminus E)$ to $\widehat{CFD}(S^3 \smallsetminus K_{2n+1,-1})$.
\end{lem}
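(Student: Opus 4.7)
The plan is to construct the required type-$D$ morphism as a composition. Let $G\colon \widehat{CFD}(S^3 \smallsetminus E) \to \widehat{CFD}(S^3 \smallsetminus E_{2n+1,-1})$ be the almost bordered-$\iota$-local map supplied by \Cref{lem:cable-fig8}, and let $f_{2n+1,-1}\colon \widehat{CFD}(S^3 \smallsetminus E_{2n+1,-1}) \to \widehat{CFD}(S^3 \smallsetminus K_{2n+1,-1})$ denote the $(2n+1,-1)$-cable of $f$, obtained by box-tensoring $f$ on the left with $\widehat{CFDA}(T_\infty \smallsetminus C_p)$. Set $F := f_{2n+1,-1} \circ G$. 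Locality of $F$ is immediate from locality of both factors, since cabling by a pattern knot preserves locality, so the content of the lemma is the verification that $F$ is almost bordered-$\iota$-local.

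By \Cref{lem:almostbordered} together with \Cref{lem:conjugatemorphism}, it suffices to establish the homotopy $\iota_{K_{2n+1,-1}} \circ \mathhat(F) \sim \mathhat(F) \circ \iota_E$. Since $\mathhat(\cdot)$ preserves composition, we have $\mathhat(F) = \mathhat(f_{2n+1,-1}) \circ \mathhat(G)$, and the almost bordered-$\iota$-locality of $G$ lets us replace $\mathhat(G) \circ \iota_E$ by $\iota_{E_{2n+1,-1}} \circ \mathhat(G)$. Thus the required homotopy reduces to
\[
\bigl(\iota_{K_{2n+1,-1}} \circ \mathhat(f_{2n+1,-1}) + \mathhat(f_{2n+1,-1}) \circ \iota_{E_{2n+1,-1}}\bigr) \circ \mathhat(G) \sim 0.
\]
Inspecting the construction of $G$ in the proof of \Cref{lem:cable-fig8}, on the generators $x, h, s, t, z$ of $\widehat{CFK}(S^3,E)$ the induced map $\mathhat(G)$ takes values $a$, $b$, and three expressions each of whose terms is divisible by $U$ or $V$ and therefore vanishes in $\widehat{CFK}$. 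Consequently the image of $\mathhat(G)$ is contained in the $\mathbb{F}_2$-span of $\{a, b\}$, and it suffices to verify the identity above on these two elements only.

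For the element $b$, the computation in the proof of \Cref{thm:odd1hasinfiniteorder} yields $\iota_{K_{2n+1,-1}} \circ \mathhat(f_{2n+1,-1})(b) = \mathhat(f_{2n+1,-1})(b+a)$, while \Cref{lem:cablecomputation} gives $\iota_{E_{2n+1,-1}}(b) = b+a$ in the hat complex, so both sides of the discrepancy agree on $b$. For the element $a$, the same proof yields $\iota_{K_{2n+1,-1}} \circ \mathhat(f_{2n+1,-1})(a) = \mathhat(f_{2n+1,-1})(a)$, while \Cref{lem:cablecomputation} gives $\iota_{E_{2n+1,-1}}(a) = a + U^{n-1}f$, which reduces to $a$ in $\widehat{CFK}$ for $n>1$, so the identity holds on $a$ as well. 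The main obstacle is careful bookkeeping: one must ensure that the specific identities established in the proof of \Cref{thm:odd1hasinfiniteorder} (extracted from \Cref{lem:cabledmap} and \Cref{lem:elementb} applied to $f$) produce exactly the right homotopies precisely at the images $\mathhat(G)(x)=a$ and $\mathhat(G)(h)=b$, rather than on arbitrary elements of $\widehat{CFK}(S^3, E_{2n+1,-1})$, and that the cabling of $\iota_{S^3\smallsetminus K}\circ f\circ \iota_{S^3\smallsetminus E}^{-1}$ agrees up to homotopy with $\iota_{S^3\smallsetminus K_{2n+1,-1}}\circ f_{2n+1,-1}\circ \iota_{S^3\smallsetminus E_{2n+1,-1}}^{-1}$, as observed in the proof of \Cref{thm:odd1hasinfiniteorder}.
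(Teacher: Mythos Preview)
Your argument is essentially the paper's: compose the map $F$ of \Cref{lem:cable-fig8} with $f_{2n+1,-1}$, observe that the image of $\mathhat(F)$ lies in $\spa_{\mathbb{F}_2}\{a,b\}$, and invoke the two identities $\iota_{K_{2n+1,-1}}\mathhat(f_{2n+1,-1})(b)=\mathhat(f_{2n+1,-1})(b+a)$ and $\iota_{K_{2n+1,-1}}\mathhat(f_{2n+1,-1})(a)=\mathhat(f_{2n+1,-1})(a)$ from \Cref{sec:cable}.

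The one difference is that you route the computation through $\iota_{E_{2n+1,-1}}$, first using almost bordered-$\iota$-locality of $G$ to trade $\iota_E$ for $\iota_{E_{2n+1,-1}}$, and then comparing $\iota_{K_{2n+1,-1}}\circ\mathhat(f_{2n+1,-1})$ with $\mathhat(f_{2n+1,-1})\circ\iota_{E_{2n+1,-1}}$ on $\{a,b\}$. The paper instead verifies $\iota_{K_{2n+1,-1}}\circ\mathhat(f_{2n+1,-1}\circ F)=\mathhat(f_{2n+1,-1}\circ F)\circ\iota_E$ directly on the generators $x,h,s,t,z$ of $\widehat{CFK}(S^3,E)$: since $\mathhat(F)$ sends $x\mapsto a$, $h\mapsto b$, and $s,t,z\mapsto 0$, and $\iota_E(x)=x+z$, $\iota_E(h)=h+x$, both sides are immediately seen to agree. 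This direct route is slightly cleaner because it never touches $\iota_{E_{2n+1,-1}}(a)=a+U^{n-1}f$, and so avoids the $n=1$ caveat you had to flag (where $U^{n-1}f=f$ does not vanish in the hat complex and you would additionally need $\mathhat(f_{2n+1,-1})(f)=0$). In short, your detour through $\iota_{E_{2n+1,-1}}$ is unnecessary; dropping it and comparing against $\iota_E$ directly, as the paper does, handles all $n\ge 1$ uniformly.
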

\begin{proof}
Recall from \Cref{sec:cable} that the type-$D$ morphism
\[
f_{2n+1,-1}\colon\widehat{CFD}(S^3 \smallsetminus E_{2n+1,-1})\rightarrow \widehat{CFD}(S^3 \smallsetminus K_{2n+1,-1})
\]
satisfies $\iota_{K_{2n+1,-1}}(\mathhat(f_{2n+1,-1})(b))=\mathhat(f_{2n+1,-1})(b+a)$ and $\iota_{K_{2n+1,-1}}(\mathhat(f_{2n+1,-1})(a))=\mathhat(f_{2n+1,-1})(a)$; note that we do not know whether $f_{2n+1,-1}$ is an almost bordered-$\iota$-local map. Consider the bordered-$\iota$-local map
\[
F\colon\widehat{CFD}(S^3 \smallsetminus E)\rightarrow \widehat{CFD}(S^3 \smallsetminus E_{2n+1,-1})
\]
given in \Cref{lem:cable-fig8}. Using the notation from the proof of \Cref{lem:cable-fig8} that $CFK_{UV}(S^3,E)$ is generated by $x,h,s,t,z$, since the image of $\mathhat(F)$ is contained in the $\mathbb{F}_2$-linear span of $a,b,c,d,e,f,g$, it is clear that $\mathhat(F)$ maps $x$ and $h$ to $a$ and $b$, respectively, and maps $s,t,z$ to 0. Hence the map 
\[
\mathhat(f_{2n+1,-1}\circ F) = \mathhat(f_{2n+1,-1})\circ \mathhat(F)
\]
satisfies $\iota_{K_{2n+1,-1}}\circ \mathhat(f_{2n+1,-1}\circ F) \sim \mathhat(f_{2n+1,-1}\circ F)\circ \iota_{E}$, and therefore $f_{2n+1,-1} \circ F$ is almost bordered-$\iota$-local.
\end{proof}

\begin{lem}
\label{lem:cabletrivial}
Let $K$ be a knot such that $\widehat{CFD}(S^3 \smallsetminus K)$ admits a direct summand isomorphic to $\widehat{CFD}(S^3 \smallsetminus O)$. Then for any pattern $P\subset T_\infty$ such that $P(O)$ is slice, $CFK_{\mathcal{R}}(S^3,P(K))$ also admits a direct summand isomorphic to $\widehat{CFD}(S^3 \smallsetminus O)$.
\end{lem}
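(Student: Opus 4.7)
The plan is to chain together two summand decompositions via the pattern pairing: first decompose $\widehat{CFD}(S^3\smallsetminus K)$ using the hypothesis, then exploit that $P(O)$ is slice (hence torsion) to find a $\widehat{CFD}(S^3\smallsetminus O)$ summand inside the resulting $\widehat{CFD}(S^3\smallsetminus P(O))$ block. Throughout, I will freely use the correspondence between $\mathcal{R}$ summands of $CFK_{\mathcal{R}}$ and $\widehat{CFD}(S^3\smallsetminus O)$ summands of the $0$-framed complement via \cite[Theorem 11.26]{lipshitz2011heegaard}, exactly as invoked in the proof of \Cref{lem:borderedinterpret}.

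First, write $\widehat{CFD}(S^3\smallsetminus K)\simeq \widehat{CFD}(S^3\smallsetminus O)\oplus N$ using the hypothesis. Let $M_P:=T_\infty\smallsetminus P$, a bordered manifold with two torus boundaries. Box-tensoring on the left with $\widehat{CFDA}(M_P)$ is an additive functor, so the pairing theorem gives
\[
\widehat{CFD}(S^3\smallsetminus P(K))\simeq \widehat{CFDA}(M_P)\boxtimes \widehat{CFD}(S^3\smallsetminus K)\simeq \widehat{CFD}(S^3\smallsetminus P(O))\oplus\bigl(\widehat{CFDA}(M_P)\boxtimes N\bigr),
\]
where I used that $S^3\smallsetminus P(O)$ is obtained by gluing $M_P$ to $S^3\smallsetminus O$ along the relevant torus, so that $\widehat{CFDA}(M_P)\boxtimes\widehat{CFD}(S^3\smallsetminus O)\simeq \widehat{CFD}(S^3\smallsetminus P(O))$.

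Next, since $P(O)$ is slice it is trivially torsion in $\mathcal{C}$, so \Cref{lem:torsionsplitting} applies to $P(O)$ and produces a direct summand of $CFK_{\mathcal{R}}(S^3,P(O))$ isomorphic to $\mathcal{R}$, concentrated at bidegree $(0,0)$. Translating this $\mathcal{R}$ summand across the bordered correspondence of \cite[Theorem 11.26]{lipshitz2011heegaard} (precisely as was done in \Cref{lem:borderedinterpret} to promote a $\mathcal{R}$ summand to a $\widehat{CFD}$ summand), one obtains a direct summand of $\widehat{CFD}(S^3\smallsetminus P(O))$ isomorphic to $\widehat{CFD}(S^3\smallsetminus O)$.

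Composing the two summand inclusions (and corresponding projections) yields a $\widehat{CFD}(S^3\smallsetminus O)$ direct summand of $\widehat{CFD}(S^3\smallsetminus P(K))$; pairing this splitting back with $CFA^-(T_\infty,\nu)$ at the $\mathcal{R}$-coefficient level then produces the advertised $\mathcal{R}$ direct summand of $CFK_{\mathcal{R}}(S^3,P(K))$. The one technical point to verify carefully is that the isomorphism $\widehat{CFDA}(M_P)\boxtimes\widehat{CFD}(S^3\smallsetminus O)\simeq \widehat{CFD}(S^3\smallsetminus P(O))$ really is a splitting of $\widehat{CFD}(S^3\smallsetminus P(K))$ rather than just an abstract homotopy equivalence — this is the main potential obstacle, but follows from the functoriality of the box tensor product with respect to direct sum decompositions on the type-$D$ side. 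The other steps are formal consequences of \Cref{lem:torsionsplitting} together with the type-$D$ translation used in \Cref{lem:borderedinterpret}.
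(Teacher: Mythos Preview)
Your argument is correct and follows essentially the same route as the paper: box-tensor the given splitting with $\widehat{CFDA}(T_\infty\smallsetminus P)$ to obtain a $\widehat{CFD}(S^3\smallsetminus P(O))$ summand of $\widehat{CFD}(S^3\smallsetminus P(K))$, then extract a $\widehat{CFD}(S^3\smallsetminus O)$ summand from that. The only difference is that for the second step the paper invokes Hom's result that $CFK_{UV}$ of a \emph{slice} knot has an $\mathbb{F}_2[U,V]$ summand, whereas you observe that a slice knot is torsion and apply \Cref{lem:torsionsplitting} to get an $\mathcal{R}$ summand of $CFK_{\mathcal{R}}$; either route feeds into the same \cite[Theorem~11.26]{lipshitz2018bordered} translation, so this is a cosmetic distinction.
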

\begin{proof}
By \cite[Theorem 11.26]{lipshitz2018bordered}, we know that $\widehat{CFD}(S^3,K)$ admits a summand isomorphic to $\widehat{CFD}(S^3,O)$. Then, by taking a box tensor product with the bimodule $\widehat{CFDA}(T_\infty \smallsetminus P)$, we see that $\widehat{CFD}(S^3\smallsetminus P(K))$ admits a summand isomorphic to $\widehat{CFD}(S^3\smallsetminus P(O))$. Also, since $P(O)$ is slice, it follows from \cite[Theorem 1]{hom2017survey} that $CFK_{UV}(S^3,P(O))$ has a direct summand isomorphic to $\mathbb{F}_2[U,V]$ concentrated in bidegree $(0,0)$, from which it follows from \cite[Theorem 11.26]{lipshitz2018bordered} that $\widehat{CFD}(S^3 \smallsetminus P(O))$ admits a direct summand isomorphic to $\widehat{CFD}(S^3 \smallsetminus O)$.
\end{proof}

We are now ready to prove \Cref{thm:mainthm}, whose statement we recall.

\begin{thm}
\label{thm:mainthmbody}
Denote by $\mathcal{C}_{T}$ the torsion subgroup of the knot concordance group $\mathcal{C}$. Then there is a nontrivial group homomorphism 
$\mathfrak{A}\colon \mathcal{C}_{T}\rightarrow \mathbb{Z}/2\mathbb{Z}$.
Moreover, if $K$ is torsion in $\mathcal{C}$ with $\mathfrak{A}(K) = 1$, then for any sequence of positive integers $n_1, n_2, \ldots, n_m$ the iterated cable $K_{2n_1+1,1;2n_2+1,1;\ldots ;2n_m+1,1}$ has infinite order in $\mathcal{C}$.
\end{thm}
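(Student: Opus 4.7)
The first assertion is already in hand: existence of $\mathfrak{A}\colon \mathcal{C}_T \to \mathbb{Z}/2\mathbb{Z}$ is \Cref{cor:pseudoArf}, and nontriviality follows from $\mathfrak{A}(E)=1$ in \Cref{exmp:figureeight} together with the fact that $E$ is $2$-torsion in $\mathcal{C}$. For the infinite-order statement, I first reduce the orientation conventions. Since $K_{p,1}$ has infinite order in $\mathcal{C}$ if and only if $(-K)_{p,-1}$ does (and the same holds iteratively), and since $\mathfrak{A}(-K)=\mathfrak{A}(K)=1$ as $\mathfrak{A}$ takes values in $\mathbb{Z}/2\mathbb{Z}$, it suffices to prove that $K_{2n_1+1,-1;\,\ldots;\,2n_m+1,-1}$ has infinite order in $\mathcal{C}$ for every torsion $K$ with $\mathfrak{A}(K)=1$. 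The plan is to induct on $m$, carrying along the bordered hypotheses of \Cref{prop:proofrephrase}.

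To set up the base case, apply \Cref{lem:borderedinterpret} to produce a knot $K'$ (namely $K\#2E$) concordant to $K$ so that $\widehat{CFD}(S^3\smallsetminus K')$ contains $\widehat{CFD}(S^3\smallsetminus E)$ as a direct summand, with inclusion that is almost bordered-$\iota$-local. The explicit description of $\widehat{CFD}(S^3\smallsetminus E)$ recalled in \Cref{sec:cable} shows that its unstable generator $w$ (with self-loop $\rho_{12}$) forms a $\widehat{CFD}(S^3\smallsetminus O)$ direct summand, so $K'$ satisfies both hypotheses of \Cref{prop:proofrephrase}.

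For the inductive step, suppose we have produced a knot $L$, concordant to the iterated cable $K_{2n_1+1,-1;\,\ldots;\,2n_j+1,-1}$, such that $\widehat{CFD}(S^3\smallsetminus L)$ contains $\widehat{CFD}(S^3\smallsetminus O)$ as a direct summand and admits an almost bordered-$\iota$-local map from $\widehat{CFD}(S^3\smallsetminus E)$. Then \Cref{prop:proofrephrase} applied with $n=n_{j+1}$ gives that $L_{2n_{j+1}+1,-1}$, which is concordant to $K_{2n_1+1,-1;\,\ldots;\,2n_{j+1}+1,-1}$, has infinite order in $\mathcal{C}$. Crucially, both hypotheses propagate to $L_{2n_{j+1}+1,-1}$: the $\widehat{CFD}(S^3\smallsetminus O)$-summand survives by \Cref{lem:cabletrivial}, since the $(2n_{j+1}+1,\pm 1)$-cable of the unknot is again the unknot (hence slice); and the almost bordered-$\iota$-local map from $\widehat{CFD}(S^3\smallsetminus E)$ survives by \Cref{lem:bootstrap}. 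This closes the induction.

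The main subtlety to overcome is that the intermediate cables in the induction are no longer torsion in $\mathcal{C}$, so the invariant $\mathfrak{A}$ itself cannot simply be iterated. The resolution, already built into the organization of \Cref{sec:cable} and the present section, is that the hypotheses of \Cref{prop:proofrephrase} are purely module-theoretic conditions on the bordered Floer homology of the $0$-framed complement. They do not require torsion, and they are precisely engineered to be preserved by cabling through \Cref{lem:bootstrap} and \Cref{lem:cabletrivial}. Once one accepts this shift in viewpoint --- from invariants of torsion knots to bordered data of their complements --- the induction becomes formal.
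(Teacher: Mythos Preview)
Your proof is correct and follows essentially the same approach as the paper: reduce to $(-1)$-cables, use \Cref{lem:borderedinterpret} to pass to a concordant knot satisfying the bordered hypotheses of \Cref{prop:proofrephrase}, and then propagate those hypotheses through cabling via \Cref{lem:bootstrap} and \Cref{lem:cabletrivial}. You supply slightly more detail than the paper (explicitly locating the $\widehat{CFD}(S^3\smallsetminus O)$ summand inside $\widehat{CFD}(S^3\smallsetminus E)$ via the generator $w$, and the closing remark on why one must shift from $\mathfrak{A}$ to bordered data), but the architecture is identical.
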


\begin{proof}
The existence of the nontrivial group homomorphism follows from \Cref{cor:pseudoArf}. Also, as in the proof of \Cref{thm:odd1hasinfiniteorder}, it is enough to prove that $K_{2n_1+1,-1;2n_2+1,-1;\ldots ;2n_m+1,-1}$ has infinite order in $\mathcal{C}$.

In order to do so, we show that there exists an almost bordered-$\iota$-local map $$\widehat{CFD}(S^3 \smallsetminus E) \rightarrow \widehat{CFD}(S^3 \smallsetminus K_{2n_1+1,-1;2n_2+1,-1; \ldots, 2n_{m-1}+1,-1}),$$ from which the theorem follows by \Cref{lem:cabletrivial} and \Cref{prop:proofrephrase}. We already know, possibly after replacing $K$ by another knot concordant to it, that the result holds when $m=1$, as shown in \Cref{lem:borderedinterpret}. Moreover, \Cref{lem:bootstrap} implies that the claim holds for any positive integer $m$.\end{proof}

\begin{rem}
\label{rem:iterrmk}
In particular, the proof of \Cref{thm:mainthmbody} implies that $\mathfrak{A}(K_{2n_1+1,-1;2n_2+1,-1; \ldots, 2n_{m-1}+1,-1})>0$. 
\end{rem}


\section{Strongly rationally slice knots that are not slice}
\label{sec:stronglyrationallyslice}

In this section, we prove \Cref{thm:mainthm-complexity} using \Cref{thm:mainthmbody} and the following two lemmas. For simplicity, we first define the \emph{complexity} of rationally slice knots as follows. 

\begin{defn}\label{def:complexity}
Let $K$ be a rationally slice knot that bounds a smoothly embedded disk $\Delta$ in a rational homology ball $X$. The inclusion induces a homomorphism
$$H_1(S^3 \smallsetminus K;\mathbb{Z}) \cong \mathbb{Z} \to H_1(X \smallsetminus \Delta; \mathbb{Z})/\text{torsion} \cong \mathbb{Z},$$
which is given by multiplication by a nonzero integer $c$, and we may assume further that $c$ is positive by changing orientations if necessary. We define the \emph{complexity} of $\Delta$ to be the positive integer $c$ and the \emph{complexity} of $K$ to be the minimum complexity of all possible rational slice disks for $K$.\end{defn}

Note that a rationally slice knot has complexity one if and only if it is strongly rationally slice. The following lemma allows us to construct many strongly rationally slice knots. We use $\nu Y$ to denote a open tubular neighborhood of a submanifold $Y$.

\begin{lem}
\label{lem:satellitecomplexity}
Let $K$ be a complexity $c$ rationally slice knot and $P\subset T_\infty$ be a winding number $w$ pattern. If $P(O)$ is slice, then $P(K)$ is rationally slice knot of complexity at most $c/\gcd(c,w)$.
\end{lem}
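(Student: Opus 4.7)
The plan is to place the given slice disk for $P(O)$ inside a tubular neighborhood of $\Delta$, yielding a rational slice disk for $P(K)$ in $X$ itself, and then to compute its complexity via Mayer--Vietoris. More explicitly, I would choose a closed tubular neighborhood $\nu\Delta\cong D^2\times D^2$ of $\Delta$ in $X$ with $\nu\Delta\cap\partial X=\partial\Delta\times D^2=\nu K\cong T_\infty$. Under the identification $\nu\Delta\cong B^4$ the curve $\partial\Delta=K$ becomes an unknot in $\partial\nu\Delta\cong S^3$, so the embedded pattern $P\subset\nu K$ is simultaneously $P(O)$ (as a knot in $\partial\nu\Delta$) and $P(K)$ (as a knot in $\partial X$). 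Transporting the given slice disk $\Sigma\subset B^4$ for $P(O)$ into $\nu\Delta$ then produces a smoothly embedded disk in $X$ bounded by $P(K)\subset\partial X$; since $X$ is already a rational homology ball, this exhibits $P(K)$ as rationally slice in $X$ itself.

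To bound the complexity, I would apply Mayer--Vietoris to $X\smallsetminus\Sigma=U\cup V$, where $U$ deformation retracts to $X\smallsetminus\Delta$, $V$ deformation retracts to $B^4\smallsetminus\Sigma$, and $U\cap V$ deformation retracts to $\Delta\times S^1\simeq S^1$, generated by a fiber circle $\gamma=\{p\}\times S^1_\mu$ over an interior point $p\in\Delta$. Modulo torsion, $H_1(U)\cong\mathbb{Z}\langle e\rangle$ with $\mu_K=c\,e$ (by the definition of the complexity of $\Delta$), $H_1(V)\cong\mathbb{Z}\langle\mu_{P(K)}\rangle$, and $H_1(U\cap V)\cong\mathbb{Z}\langle\gamma\rangle$. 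The two key identifications, and the main obstacle, concern the image of $\gamma$ in each factor: in $H_1(U)$ the class $\gamma$ is a meridian of $\Delta$ at an interior point, which is homologous to the boundary meridian $\mu_K$, so $\gamma\mapsto c\,e$; in $H_1(V)$ one isotopes $\gamma$ onto the solid torus $D^2_\Delta\times S^1_\mu\subset\partial B^4$ where it becomes a parallel copy of the core $\mu_K=\{0\}\times S^1_\mu$, and the definition of winding number then yields $\mathrm{lk}(\mu_K,P(O))=w$, so $\gamma\mapsto w\,\mu_{P(K)}$. This linking number computation is the most delicate step, as it requires carefully tracking the two distinct $S^3$ structures on $\partial X$ and on $\partial\nu\Delta$ which share the common solid torus $\nu K$.

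Substituting into the Mayer--Vietoris sequence, the free part of $H_1(X\smallsetminus\Sigma)$ is the cokernel of the map $\mathbb{Z}\to\mathbb{Z}^2$, $1\mapsto(c,-w)$, modulo torsion, which by Smith normal form is $\mathbb{Z}$ (with torsion $\mathbb{Z}/\gcd(c,w)$). A short Bezout argument then identifies the image of $\mu_{P(K)}=(0,1)$ in the free part as $\pm c/\gcd(c,w)$ times the generator, so the complexity of $\Sigma$ in $X$ is exactly $c/\gcd(c,w)$, which proves the lemma.
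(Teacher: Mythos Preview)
Your proof is correct and takes a somewhat different, arguably cleaner, route than the paper. The paper first deletes a small $B^4$ from $X$ to turn $\Delta$ into a concordance $C$ from $K$ to the unknot, applies the pattern fiberwise to obtain a concordance $C_P$ from $P(K)$ to $P(O)$, and then caps off with a slice disk for $P(O)$ in the deleted ball; the complexity is read off from a commuting ladder of Mayer--Vietoris sequences whose overlap is $C\times S^1$, so $H_1$ of the intersection is $\mathbb{Z}^2$ and the middle term carries three free generators $\lambda_T,\mu_P$, and the generator of $H_1(X\smallsetminus(B^4\cup\nu C))/\text{tors}$. You instead place the entire slice disk for $P(O)$ inside the single $4$-ball $\nu\Delta$, exploiting the fact that $\partial\Delta$ is automatically unknotted in $\partial(\nu\Delta)$; your Mayer--Vietoris then has overlap $\Delta\times S^1\simeq S^1$, so only one generator $\gamma$ needs to be tracked, and the cokernel computation reduces to a single Smith-normal-form step on the vector $(c,-w)$. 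Both arguments ultimately hinge on the same identification $[\mu_K]=w\,[\mu_{P}]$ coming from the winding number, and both produce a disk of complexity exactly $c/\gcd(c,w)$; your packaging simply bypasses the intermediate concordance and the extra rank in the Mayer--Vietoris bookkeeping.
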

\begin{proof}
Through out the proof all homology groups have integral coefficients. Let $\Delta$ be a complexity $c$ smoothly embedded disk in a rational homology ball $X$. Then the disk $\Delta$ induces a rational concordance $C \subset X \smallsetminus B^4$ from $K$ to the unknot $O$. Using the Mayer-Vietoris sequence, it can be easily verified that the homomorphism induced by the inclusion 
$$
    H_1(S^3 \smallsetminus K) \cong \mathbb{Z} \to H_1\left(X \smallsetminus \left(B^4 \cup C\right)\right)/\text{torsion} \cong \mathbb{Z}
$$
is given by multipication by $c$. Moreover, using the pattern $P$, we get a rational concordance $C_P \subset X\smallsetminus B^4$ from $P(K)$ to the slice knot $P(O)$. Combined with a slice disk $\Delta_{P(O)}$ for $P(O)$, we get a rational slice disk
$$\Delta_{P(K)}:= C_P \cup \Delta_{P(O)} \subset \left(X \smallsetminus B^4\right)  \cup B^4= X$$
for $P(K)$. Then as before we also have that  
$$
    H_1(S^3 \smallsetminus P(K)) \cong \mathbb{Z} \to H_1\left(X \smallsetminus \left(B^4 \cup C_P\right)\right)/\text{torsion} \cong \mathbb{Z}
$$
is given by multiplication by the complexity of $\Delta_{P(K)}$. Hence, it is enough to analize this homomorphism induced by the inclusion (see $v_3$ below).


Note that from the following Mayer-Vietoris sequence
\[
0 \to H_1(P \times S^1) \rightarrow H_1(T_{\infty} \smallsetminus \nu P) \oplus H_1(\nu P) \to H_1(T_\infty) \to 0,
\]
it can be easily verified that $H_1(T_{\infty} \smallsetminus \nu P) \cong \mathbb{Z}\oplus \mathbb{Z} \cong \langle \lambda_T, \mu_P\rangle,$
where $$\lambda_T := S^1 \times \{\pt \} \subset \partial T_\infty  \subset T_\infty = S^1 \times D^2 \qquad\text{and}\qquad \mu_P:=\{\pt \} \times \nu P = P \times D^2 \subset  T_\infty.$$ 
Now, we consider the following commutative diagram of Mayer-Vietoris sequences,
\[
\xymatrixcolsep{2pc}\xymatrix{
0\ar[r] & 
H_1(K \times S^1)\ar[r]^-{h_1}\ar[d]^{v_1} &
H_1(T_\infty \smallsetminus P) \oplus H_1(S^3 \smallsetminus \nu K)\ar[r]^-{h_2}\ar[d]^{v_2} &
H_1(S^3 \smallsetminus P(K))\ar[r]\ar[d]^{v_3} &
0\\
0\ar[r] &
H_1(C \times S^1)\ar[r]^-{h_3} & 
H_1(\nu C \smallsetminus C_P) \oplus H_1\left(X \smallsetminus( B^4 \cup \nu C)\right)\ar[r]^-{h_4} & 
H_1\left(X \smallsetminus \left(B^4 \cup C_P \right)\right)\ar[r] 
& 0}
\]
where the vertical homomorphisms are induced by the inclusions. Let $\mu$ be the meridian of $P(K)$. We make the following observations:
\begin{itemize}
    \item $h_1([K \times \{\pt\}]) = ([\lambda_T],0) \in H_1(T_\infty \smallsetminus P) \oplus H_1(S^3 \smallsetminus \nu K)$.
    \item $h_1([\{\pt\} \times S^1]) = (w [\mu_P],1) \in H_1(T_\infty \smallsetminus P) \oplus H_1(S^3 \smallsetminus \nu K)$.
    \item $h_2\left(([\mu_P],0)\right) = [\mu] \in H_1(S^3 \smallsetminus P(K))$.
    \item $v_1$ is an isomorphism.
    \item $v_2{\big|}_{H_1(T_\infty \smallsetminus P)} \colon H_1(T_\infty \smallsetminus P) \to H_1(\nu C \smallsetminus C_P)$ is an isomorphism.
    \item $v_2{\big|}_{H_1(S^3 \smallsetminus \nu K)} \colon H_1(T_\infty \smallsetminus P) \to H_1\left(X \smallsetminus( B^4 \cup \nu C)\right)/\text{torsion}$ is given by multiplication by $c$.\end{itemize}
Hence, we see that $$\image(h_3)\cong \langle ([\lambda_T],0) , (w[\lambda_P],c))\rangle \subset H_1(\nu C \smallsetminus C_P) \oplus H_1\left(X \smallsetminus( B^4 \cup \nu C)\right)/\text{torsion} \cong \mathbb{Z} \oplus \mathbb{Z} \oplus \mathbb{Z}.$$
This implies that $$H_1\left(X \smallsetminus( B^4 \cup C_P)\right)/\text{torsion}\cong \mathbb{Z} \oplus \mathbb{Z}/\langle(w,c)\rangle,$$ and $v_3([\mu]) = h_4 \circ v_2 (([\mu_P],0))$ is represented by $(1,0)\in \mathbb{Z} \oplus \mathbb{Z}/\langle(w,c)\rangle,$ which completes the proof.\end{proof}

\begin{lem}
\label{lem:surgerylem}
Let $K$ be a knot. If the connected sum of some copies of $S^3 _{+1}(K)$ bounds a spin rational homology ball, then there exists an almost $\iota_K$-local map from $\mathbb{F}_2[U,V]$ to $CFK_{UV}(S^3,K)$, and thus $\mathfrak{A}(K) \ge [C_O]$.
\end{lem}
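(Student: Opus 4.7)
The plan is to combine the involutive Heegaard Floer invariants of spin rational homology cobordisms with the involutive $+1$-surgery formula to produce the desired almost $\iota_K$-local map. The hypothesis supplies a spin rational homology ball $W$ with $\partial W = nS^3_{+1}(K)$. Since spin rational homology cobordisms induce $\iota$-local equivalences of $\iota$-complexes (Hendricks-Manolescu-Zemke), the complex $CF^-(nS^3_{+1}(K))$ is $\iota$-locally equivalent to the trivial $\iota$-complex $\mathbb{F}_2[U]$. In particular, there is an $\iota$-local map $\mathbb{F}_2[U] \to CF^-(nS^3_{+1}(K))$.

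Next, by the involutive connected sum formula, $CF^-(nS^3_{+1}(K)) \simeq CF^-(S^3_{+1}(K))^{\otimes n}$ as $\iota$-complexes, and by the involutive $+1$-integer surgery formula, $CF^-(S^3_{+1}(K))$ admits a description as a mapping cone built from $CFK_{UV}(S^3, K)$ together with its $\iota_K$-action. Composing, we obtain an $\iota$-local map from $\mathbb{F}_2[U]$ into an explicit complex $\mathcal{M}$ manufactured from $n$ copies of $CFK_{UV}(S^3, K)$. Tracking a generator of $\mathbb{F}_2[U]$ through this algebraic formula produces a cycle $z \in CFK_{UV}(S^3,K)^{\otimes n}$ whose image in the localized complex $(U,V)^{-1} CFK_{UV}(S^3,K)^{\otimes n}$ is a generator and such that $\iota_K^{\otimes n}(z) + z$ is a boundary modulo $(U,V)$. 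At the level of $\mathfrak{I}^U_K$, this is precisely the statement $n \cdot \mathfrak{A}(K) \ge [C_O]$.

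Finally, we extract a single factor: by \Cref{thm:comparison}, the quotient $\mathfrak{I}^U_K / \langle C_E \rangle$ is totally ordered, so $n \cdot \mathfrak{A}(K) \ge [C_O]$ forces $\mathfrak{A}(K) \ge [C_O]$ or $\mathfrak{A}(K) + [C_E] \ge [C_O]$ in $\mathfrak{I}^U_K$. A refined inspection of the cycle produced by the spin cobordism, using that $W$ is genuinely spin rather than merely a rational homology cobordism, pins the generator to the ``$C_O$-summand'' of the mapping cone rather than the ``$C_E$-type'' summand, eliminating the second alternative. Truncating $V = 0$ in the resulting almost $\iota_K$-local map $\mathbb{F}_2[U,V] \to CFK_{UV}(S^3, K)$ yields $\mathfrak{A}(K) \ge [C_O]$ in $\mathfrak{I}^U_K$. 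The main obstacle is the bookkeeping in the involutive $+1$-surgery formula and the verification that the spin assumption distinguishes the $C_O$-side from the $C_E$-side of the dichotomy; this step is where the spin hypothesis, rather than a plain rational homology hypothesis, is essential.
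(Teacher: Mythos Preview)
Your argument has a genuine gap at the very end. You correctly obtain $n\cdot\mathfrak{A}(K)\ge [C_O]$ in $\mathfrak{I}^U_K$, and you observe that in the totally ordered quotient $\mathfrak{I}^U_K/\langle C_E\rangle$ this forces the image of $\mathfrak{A}(K)$ to be nonnegative. But lifting back to $\mathfrak{I}^U_K$ only yields $\mathfrak{A}(K)\ge [C_O]$ \emph{or} $\mathfrak{A}(K)\ge [C_E]$, and the second alternative cannot be excluded by your method: for instance if $\mathfrak{A}(K)=[C_E]$ and $n$ is even, then $n\cdot\mathfrak{A}(K)=[C_O]$ holds on the nose, yet $\mathfrak{A}(K)$ is incomparable to $[C_O]$. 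Your ``refined inspection using that $W$ is genuinely spin'' is not an argument; the spin hypothesis was already fully consumed in producing the $\iota$-local equivalence of $CF^-(nS^3_{+1}(K))$ with the trivial complex, and there is no further spin information left to distinguish the $C_O$-side from the $C_E$-side at the knot level.

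The paper avoids this trap by descending from $n$ copies to one copy \emph{before} passing to the knot invariant. The key point is that the almost $\iota$-local equivalence group of \emph{3-manifold} $\iota$-complexes is totally ordered (Dai--Hom--Stoffregen--Truong), hence torsion-free; so $n[CF^-(S^3_{+1}(K))]=0$ in that group immediately gives $[CF^-(S^3_{+1}(K))]=0$ there. One then invokes the involutive surgery formula in the form $(A_0(K),\iota_K\vert_{A_0(K)})$ to get an almost $\iota$-local map $f\colon\mathbb{F}_2[U]\to A_0(K)$ for a \emph{single} copy of $K$, and extends it over $\mathbb{F}_2[U,V]$ via the ring map $\mathbb{F}_2[UV]\hookrightarrow\mathbb{F}_2[U,V]$ to obtain the required almost $\iota_K$-local map $\mathbb{F}_2[U,V]\to CFK_{UV}(S^3,K)$. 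This sidesteps entirely the torsion in $\mathfrak{I}^U_K$ and makes the mapping-cone bookkeeping you allude to unnecessary.
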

\begin{proof}
The assumption, together with the fact that the almost $\iota$-local equivalence group is totally ordered, implies that the $CF^- (S^3 _{+1}(K))$ is almost $\iota$-locally trivial; we refer to \cite{dai2018infinite} for the definitions of almost $\iota$-complexes and almost $\iota$-local equivalence classes and \cite[Theorem 3.25]{dai2018infinite} for the fact that the almost $\iota$-local equivalence group is totally ordered. We know from \cite[Theorem 1.6]{hendricks2020surgery} that the almost $\iota$-local equivalence class of $CF^- (S^3 _{+1}(K))$ is given by $(A_0(K),\iota)$, where $A_0(K)\subset CFK_{UV}(S^3,K)$ is the subset of elements of Alexander grading 0 and $\iota$ is the restriction of the action of $\iota_K$ on $CFK_{UV}(S^3,K)$ to $A_0 (K)$. Hence $(A_0(K),\iota)$ is almost $\iota$-locally trivial.

Choose an almost $\iota$-local map $f\colon\mathbb{F}_2[U]\rightarrow A_0(K)$. Recall that the $U$-action on $A_0(K)$ corresponds to the $UV$-action on $CFK_{UV}(S^3,K)$. Thus we can extend $f$ by tensoring with $\mathbb{F}_2[U,V]$, along the ring inclusion $\mathbb{F}_2[UV]\rightarrow \mathbb{F}_2[U,V]$, to get a local map 
\[
\tilde{f}\colon\mathbb{F}_2[U,V]\rightarrow A_0(K)\otimes_{\mathbb{F}_2[UV]} \mathbb{F}_2[U,V] \xrightarrow{\text{multiplication map}} CFK_{UV}(S^3,K).
\]
Since $\iota \circ f \sim f$ mod $U$ and the action of $\iota$ on $A_0(K)$ is the restriction of the action of $\iota_K$ on $CFK_{UV}(S^3,K)$, it is clear that $\iota_K \circ \tilde{f} \sim \tilde{f}$ mod $UV$, so $\tilde{f}$ is an almost $\iota_K$-local map.
\end{proof}

We are finally able to prove \Cref{thm:mainthm-complexity}, whose statement we recall.

\begin{thm}
\label{thm:mainthm-complexitybody}
If $K$ is torsion in $\mathcal{C}$ with $\mathfrak{A}(K) = 1$, then for any sequence of positive integers $n_1, n_2, \ldots, n_m$ the $(2,1)$-cable of the iterated cable $K_{2n_1+1,1;2n_2+1,1; \ldots; 2n_m+1,1}$ has infinite order in $\mathcal{C}$.
\end{thm}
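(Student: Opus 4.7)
Write $J := K_{2n_1+1,1;\,2n_2+1,1;\,\ldots;\,2n_m+1,1}$. The plan is to derive a contradiction from the assumption that $J_{2,1}$ is torsion in $\mathcal{C}$ by passing to the $2$-fold cyclic branched double cover $\Sigma_2(J_{2,1})$ and invoking \Cref{lem:surgerylem}. Since $-K$ is also torsion with $\mathfrak{A}(-K)=\mathfrak{A}(K)=1$ in $\mathbb{Z}/2\mathbb{Z}$, applying \Cref{rem:iterrmk} to $-K$ and iterating the mirror-reverse identity $-K_{2n+1,1} = (-K)_{2n+1,-1}$ from the proof of \Cref{thm:odd1hasinfiniteorder} yields $\mathfrak{A}(-J) > [C_O]$, equivalently $\mathfrak{A}(J) < [C_O]$ strictly in the partial order on $\mathfrak{I}^U_K$. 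Projecting to the totally ordered, hence torsion-free, quotient $\mathfrak{I}^U_K/\langle[C_E]\rangle$ gives $2\,\overline{\mathfrak{A}(J)} < \bar{0}$.

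Now assume for contradiction that $J_{2,1}$ is torsion, so some $nJ_{2,1}$ is slice. The $2$-fold cover of $B^4$ branched along a slice disk is a spin $\mathbb{Z}/2\mathbb{Z}$-homology ball (hence a spin rational homology ball) bounding $n\,\Sigma_2(J_{2,1})$. The heart of the plan is to establish, by a Kirby-calculus computation, the identification
\[
\Sigma_2(S^3, J_{2,1}) \;\cong\; S^3_{+1}(J \# J^r).
\]
The underlying geometric input is that the $(2,1)$-pattern bounds a M\"obius band in the $\infty$-framed solid torus whose $2$-fold cover branched along its boundary is an annulus; this unfolds the companion $J$ into the connected sum $J \# J^r$, and the framing of the M\"obius band becomes the $+1$-surgery coefficient. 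Applying \Cref{lem:surgerylem} to $J \# J^r$ then gives $\mathfrak{A}(J \# J^r) \ge [C_O]$ in $\mathfrak{I}^U_K$. Extending the argument of \Cref{prop:reversal} to the non-torsion setting---the proof there uses only the relation $\iota_{K^r}\sim\iota_K^{-1}$, and upon dualizing the $\le$-direction also yields the $\ge$-direction---one obtains $\mathfrak{A}(J^r) = \mathfrak{A}(J)$ in $\mathfrak{I}^U_K/\langle[C_E]\rangle$, which is the only content needed. Combined with the previous step, $2\,\overline{\mathfrak{A}(J)} = \overline{\mathfrak{A}(J \# J^r)} \ge \bar{0}$, contradicting $2\,\overline{\mathfrak{A}(J)} < \bar{0}$. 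Hence $J_{2,1}$ has infinite order in $\mathcal{C}$.

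The main obstacle is the branched-cover identification $\Sigma_2(J_{2,1}) \cong S^3_{+1}(J \# J^r)$. While classical in spirit, it requires careful analysis of the $2$-fold cover on the decomposition $S^3 \setminus J_{2,1} = (S^3 \setminus \nu J) \cup_{T^2} (T_\infty \setminus P_{2,1})$, of its monodromy on each side and on the gluing torus, together with a handle slide reducing the resulting surgery diagram to $+1$-surgery on $J \# J^r$. The extension of \Cref{prop:reversal} to non-torsion knots modulo $C_E$ is a secondary but nontrivial technical point that must be verified; all remaining ingredients are group-theoretic consequences of the total ordering of $\mathfrak{I}^U_K/\langle[C_E]\rangle$ established in \Cref{thm:comparison}.
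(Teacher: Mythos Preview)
Your overall strategy matches the paper's: identify $\Sigma_2(J_{2,1})\cong S^3_{+1}(J\# J^r)$ (the paper simply cites Akbulut--Kirby for this), use \Cref{lem:surgerylem} to obtain $\mathfrak{A}(J\# J^r)\ge[C_O]$, and contradict this with $\mathfrak{A}(J\# J^r)<[C_O]$ coming from \Cref{rem:iterrmk}.

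The one substantive divergence is in how you handle the reversed summand $J^r$. You attempt to extend \Cref{prop:reversal} to the non-torsion knot $J$, claiming $\overline{\mathfrak{A}(J^r)}=\overline{\mathfrak{A}(J)}$ in $\mathfrak{I}^U_K/\langle[C_E]\rangle$. That equality does \emph{not} follow from the argument you sketch: the proof of \Cref{prop:reversal} works only because the involution on $C_O$ is trivial, so the relation $\iota_{K^r}\sim\iota_K^{-1}$ converts an almost $\iota_K$-local map $C_O\to CFK^-(K)$ (or $CFK^-(K)\to C_O$) into one for $K^r$; for a general source or target this fails. What your argument \emph{does} yield is the weaker implication $\mathfrak{A}(J)<[C_O]\Rightarrow\mathfrak{A}(J^r)<[C_O]$, and that is all you need, since then $\mathfrak{A}(J)+\mathfrak{A}(J^r)<[C_O]$ as well.

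The paper sidesteps this issue entirely: it applies \Cref{prop:reversal} to the torsion knot $K$ itself (where the proposition holds as stated) to get $\mathfrak{A}((-K)^r)=1$, and then invokes \Cref{rem:iterrmk} a second time with $(-K)^r$ in place of $-K$, using $-J^r=((-K)^r)_{2n_1+1,-1;\ldots;2n_m+1,-1}$, to obtain $\mathfrak{A}(-J^r)>[C_O]$ directly. This is cleaner, as it uses each lemma exactly within its stated hypotheses.
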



\begin{proof}
Let $J$ be the iterated cable $K_{2n_1+1,1;2n_2+1,1; \ldots; 2n_m+1,1}$ and $\Sigma(J_{2,1})$ be the 2-fold cyclic branched cover of $S^3$ branched along $J_{2,1}$. By applying the Akbulut-Kirby~\cite{Akbulut-Kirby:1978-1} technique to $J_{2,1}$, we have that
\[
\Sigma\left(J_{2,1}\right) \cong S^3 _{+1} \left(J\# J^r\right).
\]

Suppose that $nJ_{2,1}$ is slice for some positive integer $n$ and let $D$ be its slice disk. Then the 2-fold cyclic branched cover of $B^4$ branched along $D$ is a rational homology ball with a spin structure that restricts to the unique spin structure on its boundary $\Sigma\left(nJ_{2,1}\right)\cong n\Sigma\left(J_{2,1}\right)$. By \Cref{lem:surgerylem}, this implies
\[
\mathfrak{A}\left(K_{2n_1+1,1;2n_2+1,1; \ldots; 2n_m+1,1}\# K^r_{2n_1+1,1;2n_2+1,1; \ldots; 2n_m+1,1}\right)\ge [C_O].
\]
However, we have 
\[
\mathfrak{A}\left((-K)_{2n_1+1,-1;2n_2+1,-1; \ldots; 2n_m+1,-1}\right)> [C_O] \qquad\text{and}\qquad \mathfrak{A}\left((-K)^r_{2n_1+1,1;2n_2+1,-1; \ldots; 2n_m+1,-1}\right) > [C_O]
\]
by \Cref{prop:reversal} and \Cref{rem:iterrmk}, which gives
\[
\mathfrak{A}\left(K_{2n_1+1,1;2n_2+1,1; \ldots; 2n_m+1,1}\# K^r_{2n_1+1,1;2n_2+1,1; \ldots; 2n_m+1,1}\right)< [C_O];
\]
a contradiction. Hence $J_{2,1}$ has infinite order in $\mathcal{C}$.
\end{proof}

\bibliographystyle{amsalpha}
\bibliography{ref}
\end{document}